%% file: main.tex
\documentclass[leqno]{amsart}
\usepackage[foot]{amsaddr}
\usepackage{color}
\usepackage{amsmath, amssymb, amsfonts, amsthm}
\usepackage{bm}
\usepackage{array}
\usepackage{graphicx}
\usepackage{placeins}
\usepackage{booktabs}
\usepackage{mathtools}
\usepackage{subfig}
\usepackage{subfiles}
\usepackage{hyperref}
\usepackage{xcolor}
\usepackage[maxnames=99]{biblatex}
\newtheorem{Remark}{Remark}

\newtheorem{Assumption}{Assumption}
\newtheorem{Theorem}{Theorem}

\newtheorem{Lemma}{Lemma}
\DeclareMathOperator*{\argmax}{arg\,max}

\newcommand{\bs}[1]{\boldsymbol{#1}}
\newcommand{\di}[1]{\,\mathrm{d}{#1}}

\begin{document}

\title[Median RQMC for nested integration]{Median randomized quasi-Monte Carlo for nested integration}
\author{Arved Bartuska$^{\#,1}$, Yang Liu$^{1}$, \& Ra\'{u}l Tempone$^{1,2,3}$}
\address{$^1$King Abdullah University of Science \& Technology (KAUST), Computer, Electrical and Mathematical Sciences \& Engineering Division (CEMSE), Thuwal 23955-6900, Saudi Arabia}
\address{$^2$Department of Mathematics, RWTH Aachen University, Geb\"{a}ude-1953 1.OG, Pontdriesch 14-16, 161, 52062 Aachen, Germany}
\address{$^3$Alexander von Humboldt Professor in Mathematics for Uncertainty Quantification, RWTH Aachen University, Germany}
\email{$^\#$arved.bartuska@kaust.edu.sa}

\begin{abstract}
\noindent
\subfile{./abstract.tex}
\end{abstract}

\maketitle


\section{Introduction}

Nested integration arises in various fields of high practical relevance, such as Bayesian optimal experimental design (BOED)~\cite{Cha95, Sha48, Kul59, Lin56, Rya16, Rai24, Hua13}, mathematical finance~\cite{Gor10}, and medical decision-making~\cite{Gil19}. In this work, we focus on the BOED application. The nested structure renders traditional estimation methods such as Monte Carlo (MC) or randomized quasi-Monte Carlo (RQMC) significantly more costly than for the non-nested case. The primary example of a nested integral we consider in this work is the expected information gain (EIG) of an experiment, where the outer integrand is the logarithm and the inner integrand is the likelihood of data observations. An estimator for the EIG based on a double-loop Monte Carlo (DLMC) method was first developed in \cite{Rya03}. This estimator required computational work of $\mathcal{O}(\mathrm{TOL}^{-3})$ to achieve an error tolerance of $\mathrm{TOL}>0$~\cite{Bec18}. This is a significant increase compared to MC estimators for standard, that is, non-nested, integrals with cost $\mathcal{O}(\mathrm{TOL}^{-2})$. Later, \cite{Rai18} provided a more general analysis of nested MC estimators. If the experiment must be modeled via finite elements or a similar discretization scheme, the cost of the DLMC estimator also includes a term relating to the cost and the weak convergence rate of the discretization method. This setting was analyzed in~\cite{Bec18}. A multilevel MC (MLMC) estimator for the EIG without discretization of the model was developed in \cite{God20}, where the inner number of samples determines the level of accuracy, leading to a computational cost of $\mathcal{O}(\mathrm{TOL}^{-2})$, similar to the non-nested case. The work \cite{Bec20} developed an MLMC estimator for the EIG where the level of accuracy was determined by the inner number of samples and the finite element discretization of the model. An importance sampling scheme based on the Laplace approximation ensured a convergence rate of $\mathcal{O}(\mathrm{TOL}^{-2})$ for reasonably large tolerances and $\mathcal{O}(\mathrm{TOL}^{-3})$ as $\mathrm{TOL}\to 0$ under certain assumptions on the discretization scheme. In \cite{Du24}, the level of accuracy was determined by both the discretization and the inner number of samples to create a multi-index MC (MIMC) estimator with cost $\mathcal{O}(\mathrm{TOL}^{-2})$ to estimate the EIG. In all of these estimators, the limiting factor is the rate $\mathcal{O}(\mathrm{TOL}^{-2})$ incurred by the outer MC estimation. To break this barrier, \cite{Bar24} and \cite{Kaa24} proposed estimators based on the randomized quasi-MC (RQMC) method, achieving computational costs of almost $\mathcal{O}(\mathrm{TOL}^{-10/9})$ and almost $\mathcal{O}(\mathrm{TOL}^{-1})$, respectively. Later, \cite{Bar24b} proposed a multilevel double-loop estimator employing both randomized and deterministic QMC methods to achieve computational costs of almost $\mathcal{O}(\mathrm{TOL}^{-8/9})$.

The quasi-Monte Carlo (QMC) method is a widely used numerical integration technique that employs deterministic low-discrepancy sequences to improve convergence over standard Monte Carlo. One popular example for such a low-discrepancy sequence is the Sobol' sequence \cite{Sob67}. Estimators using the first $N$ points in the Sobol' sequence in dimension $d$ can achieve an asymptotic convergence rate of $\mathcal{O}(N^{-1} (\log(N))^{d-1})$ for integrands with finite Hardy--Krause variation~\cite{dick2010digital}. The cost of such estimators is proportional to $N$, and thus the overall computational cost to achieve an error tolerance $\mathrm{TOL}>0$ is $\mathcal{O}(\mathrm{TOL}^{-1/(1-\epsilon)})$, where $\epsilon>0$ is used to hide log terms. The RQMC method yields unbiased mean estimators and variance estimates by randomizing the low-discrepancy sequences. The variance of RQMC estimators has been extensively explored in the previous literature, for instance in~\cite{lecuyer2000variance, owen1997monte, dick2010digital, liu2025randomized}. Under suitable regularity conditions, the convergence rate of RQMC estimators based on the Sobol' sequence with random scrambling can reach $\mathcal{O}(N^{-3/2} (\log(N))^{(d-1)/2})$~\cite{liu2025weak}. More recently, it has been shown that even faster and smoothness-adaptive convergence rates can be attained by taking the median of estimators based on random linear scrambled randomized sequences~\cite{Pan23, Pan24}, thus achieving convergence rates of $\mathcal{O}(N^{-\frac{c}{d}\log(N)})$ for any $c<3\log(2)/\pi^2$, where $\log$ denotes the natural logarithm. It was noted in~\cite{God26} that replacing the mean with the median over fully nested scramblings does not provide similar improvements. Such scramblings will thus not be considered in this work. Moreover, the term ``nested'' in this context is conceptually unrelated to nested integrals and should not be confused.

In this work, we extend the median RQMC (mRQMC) approach to the nested integration setting. Not only do we demonstrate that the improved efficiency of mRQMC directly translates from the standard to the nested case for both the inner and outer integrals, we also show that the overall estimator cost does not significantly increase for nested integration problems. Thus, the mRQMC method is particularly suited for the nested setting. We focus on applications from BED, where nested integrals appear in the EIG functional, quantifying the information increase (i.e., reduction of uncertainty) for possible data observations from an experiment. The EIG is equivalent to the expected Kullback--Leibler divergence between the prior and posterior distributions of the model parameters and, moreover, equivalent to the mutual information between parameters and data observations. For the case of additive Gaussian observation noise in the experiment, the likelihood is a Gaussian density. As the likelihood approaches zero, the logarithm in the outer integral displays singular behavior. This singularity deteriorates the convergence rates of RQMC estimators for the outer integral. 

We propose to remedy this issue using an importance sampling scheme. A further remedy, which we introduced in~\cite{Bar24} and applied successfully in~\cite{Bar24b}, is to truncate the Gaussian observation noise, or equivalently to restrict the argument of the inverse cumulative distribution function (CDF) away from the boundary of the unit hypercube. Truncation likewise removes the boundary singularity, but it does so at the price of a bias that must be balanced against the remaining error contributions. The importance sampling scheme considered in the present work instead leaves the value of the integral unchanged, and therefore we do not pursue truncation further here. The main contributions of this work are summarized as follows:
\begin{enumerate}
    \item[(i)] we introduce the DLmRQMC estimator for general nested integrals and analyze its estimation error under certain regularity conditions;
    \item[(ii)] we propose an importance sampling method and derive regularity conditions required to bound the DLmRQMC estimator error for the EIG application;
    \item[(iii)] we present numerical experiments to demonstrate the improved efficiency of the proposed estimator compared to established methods and conclude with an EIG example to verify its practical applicability.
\end{enumerate}

This paper is organized as follows. Section~\ref{sec:background} provides background information and introduces notation related to the RQMC method. Section~\ref{sec:median_dl_estimator} introduces the DLmRQMC estimator. Section~\ref{sec:error.bounds} analyzes the error of the DLmRQMC estimator for analytic integrands. In Section~\ref{sec:eig}, the EIG formulation is introduced and an importance sampling method is presented. In Section~\ref{sec:error.bounds.EIG}, we analyze the error of the DLmRQMC estimator for the EIG setting. Section~\ref{sec:numex} presents numerical results. Finally, Section~\ref{sec:conclusion} concludes the paper and discusses possible further directions.

\section{Background}
\label{sec:background}
We start with a standard non-nested integration problem. Let
\begin{equation}\label{def:g.bar}
    I[\varphi]\coloneqq\int_{[0,1]^d}\varphi(\bs{z})\di{}\bs{z},
\end{equation}
be a standard (non-nested) integral, where $\varphi:[0,1]^d\to\mathbb{R}$ and $d$ is a positive integer. Throughout this work, we assume that $\varphi$ is square-integrable and that the integration domain is the unit hypercube $[0,1]^d$ in dimension $d$. Moreover, let $M\in\mathbb{N}$ and $(\bs{z}^{(m)})_{m=1}^M$ be a sequence of points in $[0,1]^d$, and
\begin{equation}
    \mu_M[\varphi]\coloneqq \frac{1}{M}\sum_{m=1}^M\varphi(\bs{z}^{(m)})
\end{equation}
be the sample mean of $\varphi$ evaluated at points $\bs{z}^{(m)}$, where $1\leq m\leq M$. 
\subsection{Randomized Quasi-Monte Carlo estimator}
Let 
\begin{equation}
    \bs{z}^{(r,m)}\coloneqq\bs{\rho}^{(r)}(\bs{t}^{(m)}), \quad 1\leq r\leq R, \, 1\leq m\leq M,
\end{equation}
where $\bs{t}^{(1)},\ldots,\bs{t}^{(M)}$ are points from the deterministic Sobol' sequence in dimension $d$ and $\bs{\rho}^{(1)},\ldots,\bs{\rho}^{(R)}$ are independent random linear scramblings~\cite{Mat98} for $R\in\mathbb{N}$. The randomized Quasi-Monte Carlo (RQMC) estimator is defined as follows:
\begin{align}
 \mu_{R}\left[\mu_M\left[\varphi\right]\right] {}&\coloneqq \frac{1}{R}\sum_{r=1}^{R}\frac{1}{M}\sum_{m=1}^M\varphi(\bs{z}^{(r,m)}),\nonumber\\
 {}&=\frac{1}{R}\sum_{r=1}^{R}\mu_{M}^{(r)}\left[\varphi\right].
\end{align}
For simplicity, we define the following:
\begin{equation}
    \mu_{RM}[\varphi]\coloneqq\mu_{R}\left[\mu_M\left[\varphi\right]\right].
\end{equation}
 We consider the number of randomizations $R$ to be fixed and adjust the number of Sobol' points $M=2^{\tilde{m}}$ for some $\tilde{m}$ to meet a user-defined error tolerance $\mathrm{TOL}>0$. The particular construction of the Sobol' sequence necessitates that the number of points are chosen to be a power of two. An odd number of randomizations $R=2\tilde{r}-1$ for some $\tilde{r}$ is selected for comparison with median-based estimators where an odd number of randomizations is typically used.  When the dominating mixed derivative of $\varphi$ in the weak sense is square-integrable, the root mean squared error (RMSE) of the RQMC estimator converges almost at a rate of $3/2$ in $M$, that is,
\begin{equation}
    \mathbb{E}\left[\left(\mu_{RM}[\varphi]-I[\varphi]\right)^2\right]^{\frac{1}{2}}\leq \frac{C_{d,\epsilon}}{R^{\frac{1}{2}}M^{\frac{3}{2}-\epsilon}},
\end{equation}
for any $\epsilon>0$, where $C_{d,\epsilon}$ is independent of $M$ and $C_{d,\epsilon}\to\infty$ as $\epsilon\to 0$~\cite{Owe97, dick2010digital}. The MC convergence rate of $1/2$ in $R$ implies that increasing $M$ is more efficient, and $R$ should typically be selected as small as possible as the overall cost of this estimator is proportional to $R\times M$.

\subsection{Median Randomized Quasi-Monte Carlo estimator}
The work \cite{Pan23} suggests replacing the mean over the randomizations in the RQMC estimator with the median to obtain the median RQMC (mRQMC) estimator. Let $Z=(z^{(r)})_{r=1}^{R}$ be a sequence of real numbers where $z^{(r)}\in\mathbb{R}$ for $R=2\tilde{r}-1$. Moreover, let $\sigma_{\rm{asc}}\in\mathcal{S}$ be a permutation that rearranges $Z$ in ascending order and $\mathcal{S}$ is the set of all permutations of $R$ elements of a sequence. That is,
\begin{equation}
    \hat{z}^{(r)}\coloneqq \sigma_{\rm{asc}}(z^{(i_r)}), \quad \text{for $1\leq r\leq R$}, 
\end{equation}
for appropriately chosen indices $i_{1},\ldots,i_{R}$ such that
\begin{equation}
    \hat{z}^{(1)}\leq\ldots\leq \hat{z}^{(\tilde{r})}\leq \ldots\leq \hat{z}^{(R)}.
\end{equation}
If all elements in $Z$ are unique, then $\sigma_{\rm{asc}}$ is also unique. Otherwise, we simply pick one permutation that sorts the elements of $Z$ in ascending order. The median of $Z$ is then defined as follows:
\begin{equation}\label{def:median}
    \mathrm{med}\{Z\}\coloneqq \hat{z}^{(\tilde{r})}.
\end{equation}
The mRQMC estimator is subsequently defined as follows:
\begin{align}
 \hat{\mu}_{M}^{(\tilde{r})}[\varphi]\coloneqq\mathrm{med}\left\{(\hat{\mu}_{M}^{(r)}[\varphi])_{r=1}^R\right\},
\end{align}
where
\begin{equation}
    \hat{\mu}_{M}^{(r)}[\varphi]\coloneqq \sigma_{\rm{asc}}(\mu_{M}^{(i_{r})}[\varphi]), \quad \text{for $1\leq r\leq R$}
\end{equation}
such that $\hat{\mu}_{M}^{(1)}[\varphi]\leq\ldots\leq \hat{\mu}_{M}^{(\tilde{r})}[\varphi]\leq\ldots \leq\hat{\mu}_{M}^{(R)}[\varphi]$ are in ascending order. The motivation to replace the mean over the randomizations with the median is that the RMSE of the RQMC estimator is often dominated by large outliers and the median thus achieves smaller error for those cases. The work \cite{Pan24} demonstrated that the probability of the mRQMC absolute error being larger than $M^{-\frac{c}{d}\log(M)}$ for all $c<3\log(2)/\pi^2$ converges super-polynomially as long as $R=\Theta((\log_2(M))^2)$. In particular, this holds if the integrand $\varphi$ is analytic.

\section{Double-loop median RQMC estimator definition and error splitting}
\label{sec:median_dl_estimator}
We aim to estimate nested integrals of the form
\begin{align}\label{eq:double.loop}
  I\left[f\left(I[g]\right)\right] {}&\coloneqq \int_{[0,1]^{d_1}}f\left(\int_{[0,1]^{d_2}}g(\bs{y},\bs{x})\di{}\bs{x}\right)\di{}\bs{y},
\end{align}
where $f$ is a nonlinear function and $d_1, d_2$ are positive integers. 
\subsection{Estimator definition}
The double-loop RQMC (DLRQMC) estimator \cite{Bar24} uses the RQMC method to estimate the inner and outer integrals in the nested integral~\eqref{eq:double.loop}. This estimator has the following shape:
\begin{equation}\label{eq:dlqmc.S.R}
\mu_{SN}\left[f\left(\mu_{RM}\left[g\right]\right)\right] \coloneqq \frac{1}{S}\sum_{s=1}^{S}\frac{1}{N}\sum_{n=1}^Nf\left(\frac{1}{R}\sum_{r=1}^{R}\frac{1}{M}\sum_{m=1}^Mg(\bs{y}^{(s,n)},\bs{x}^{(s,n,r,m)})\right),
\end{equation}
where
\begin{equation}
    \bs{y}^{(s,n)}\coloneqq\bs{\tau}^{(s)}(\bs{u}^{(n)}), \quad 1\leq s\leq S, \, 1\leq n\leq N
\end{equation}
and $\bs{\tau}^{(s)}$, with $1\leq s\leq S$, $S\in\mathbb{N}$ are independent scramblings of the Sobol' sequence $\bs{u}^{(n)}$ in $d_1$, where $1\leq n\leq N$, $N\in\mathbb{N}$, and
\begin{equation}\label{eq:x.nrm}
    \bs{x}^{(s,n,r,m)}\coloneqq\bs{\rho}^{(s,n,r)}(\bs{t}^{(m)}), \quad 1\leq s\leq S, \, 1\leq n\leq N, \, 1\leq r\leq R, \, 1\leq m\leq M,
\end{equation}
where $\bs{\rho}^{(s,n,r)}$, with $1\leq s\leq S$, $1\leq n\leq N$, $N=2^{\tilde{n}}$ and $1\leq r\leq R$ are independent randomizations of the Sobol' sequence $\bs{t}^{(m)}$ in $d_2$, where $1\leq m\leq M$ and $M=2^{\tilde{m}}$.

In this work, we propose a double-loop mRQMC (DLmRQMC) estimator, given as follows:
\begin{align} {}&\hat{\mu}_{N}^{(\tilde{s})}\left[f\left(\hat{\mu}_{M}^{(\tilde{r})}[g]\right)\right]\nonumber\\
{}&\coloneqq\mathrm{med}\left\{\left(\frac{1}{N}\sum_{n=1}^Nf\left(\mathrm{med}\left\{\left(\frac{1}{M}\sum_{m=1}^Mg(\bs{y}^{(s,n)},\bs{x}^{(s,n,r,m)})\right)_{r=1}^R\right\}\right)\right)_{s=1}^S\right\},
\end{align}
where separate permutations are applied for each $s,n$, where $1\leq s\leq S$, $1\leq n\leq N$. We decompose the total integration error into an inner and an outer error as follows:
\begin{align}\label{eq:total.error}
    \mathcal{E}_{\rm{tot}}{}&\coloneqq\hat{\mu}_{N}^{(\tilde{s})}\left[f\left(\hat{\mu}_{M}^{(\tilde{r})}[g]\right)\right] - I\left[f\left(I[g]\right)\right]\nonumber\\
    {}&= \underbrace{\hat{\mu}_{N}^{(\tilde{s})}\left[f\left(\hat{\mu}_{M}^{(\tilde{r})}[g]\right)\right] - \hat{\mu}_{N}^{(\tilde{s})}\left[f\left(I[g]\right)\right]}_{\eqqcolon\mathcal{E}_{\rm{in}}} + \underbrace{\hat{\mu}_{N}^{(\tilde{s})}\left[f\left(I[g]\right)\right] - I\left[f\left(I[g]\right)\right]}_{\eqqcolon\mathcal{E}_{\rm{out}}}.
\end{align}
Notice that in the inner error term $\mathcal{E}_{\rm{in}}$, the outer median may be attained at different original indices $s$ for both terms, where $1\leq s\leq S$. We need the following lemmas to proceed.
\begin{Lemma}[Maximum difference between two sequences]\label{lem:max.diff}
   Let $A = (A^{(1)}, \dotsc, A^{(R)})$ and $B = (B^{(1)}, \dotsc, B^{(R)})$ be two sequences with size $R=2\tilde{r}-1$. Given two permutations $(\sigma_{A,\mathrm{asc}},\sigma_{B,\mathrm{asc}})\in\mathcal{S}\times\mathcal{S}$ that sort $A$ and $B$ in ascending order, respectively, it holds that $\sigma_{A,\mathrm{asc}}$ and $\sigma_{B,\mathrm{asc}}$ minimize the maximum element-wise differences of the two sequences among all pairs of permutations in $\mathcal{S}\times\mathcal{S}$. That is,
   \begin{equation}
       \max_{r=1,\ldots,R} \left\lvert \sigma_{A,\mathrm{asc}}(A^{(r)}) -  \sigma_{B,\mathrm{asc}}(B^{(r)}) \right\rvert\leq \max_{r=1,\ldots,R} \left\lvert \sigma_{A}(A^{(r)}) -  \sigma_{B}(B^{(r)}) \right\rvert
   \end{equation}
   for any $\{\sigma_{A},\sigma_{B}\}\in\mathcal{S}\times\mathcal{S}$.
\end{Lemma}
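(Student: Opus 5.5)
Write $a_{(1)}\le\dots\le a_{(R)}$ and $b_{(1)}\le\dots\le b_{(R)}$ for the sorted versions of $A$ and $B$. Applying permutations $\sigma_A,\sigma_B$ and comparing entries position by position is the same as pairing each $A$-element with exactly one $B$-element. So the right-hand side of the statement equals $\max_{r}|a_{(r)}-b_{(\pi(r))}|$ for some bijection $\pi$ of $\{1,\ldots,R\}$. Indeed, $\pi=\sigma_B\circ\sigma_A^{-1}$ after identifying permutations with index maps; how $\sigma_{\mathrm{asc}}$ acts is fixed by the definition preceding \eqref{def:median}. The claim therefore reduces to the classical bottleneck-matching inequality
\begin{equation*}
\max_{r}|a_{(r)}-b_{(r)}|\le \max_{r}|a_{(r)}-b_{(\pi(r))}| \quad\text{for every bijection } \pi .
\end{equation*}
The oddness $R=2\tilde r-1$ plays no role here.

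The direct route is a pigeonhole argument. Fix the index $k$ attaining the left-hand maximum, and assume without loss of generality that $a_{(k)}\ge b_{(k)}$; the other case follows by swapping the roles of $A$ and $B$. Consider the set $\{k,\ldots,R\}$ of indices $r$ with $a_{(r)}\ge a_{(k)}$, which has $R-k+1$ elements. The indices $j$ with $b_{(j)}\le b_{(k)}$ include $\{1,\ldots,k\}$, which has $k$ elements. The two index sets have sizes summing to $R+1>R$. Hence $\pi$ must map some $r\ge k$ to some $j\le k$; otherwise the $R-k+1$ elements of $\{k,\ldots,R\}$ would all map into $\{k+1,\ldots,R\}$, which has only $R-k$ elements. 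For that pair,
\begin{equation*}
|a_{(r)}-b_{(\pi(r))}|\ge a_{(r)}-b_{(j)}\ge a_{(k)}-b_{(k)}=\max_{i}|a_{(i)}-b_{(i)}|,
\end{equation*}
which proves the inequality.

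An alternative is an exchange argument. Whenever $\pi$ has an inversion $r<r'$ with $\pi(r)>\pi(r')$, swap the two assignments. For $a\le a'$ and $b\le b'$ one has $\max(|a-b|,|a'-b'|)\le\max(|a-b'|,|a'-b|)$, checked by cases on the relative order of the four numbers. So each swap does not increase the maximum, and finitely many swaps reach the identity pairing. I would present the pigeonhole version because it is shorter.

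The only real obstacle is bookkeeping. One must state carefully how the paper's notation $\sigma(A^{(r)})$ corresponds to a reindexing, so that the reduction to a bijection $\pi$ is airtight, and the case $a_{(k)}<b_{(k)}$ must be handled by symmetry. Ties in the sorted sequences cause no difficulty, because the argument uses only the weak inequalities $a_{(r)}\ge a_{(k)}$ and $b_{(j)}\le b_{(k)}$.
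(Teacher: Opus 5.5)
Your proof is correct and is essentially the paper's argument. The paper likewise fixes the index $r^\ast$ attaining the sorted-pairing maximum and uses the same pigeonhole count: the $R-r^\ast$ positions above $r^\ast$ cannot all be matched to indices above $r^\ast$. This yields a pair whose difference is at least the old maximum. Your version is slightly tidier, because fixing the sign of $a_{(k)}-b_{(k)}$ up front and including $k$ itself in $\{k,\ldots,R\}$ absorbs the paper's case split on whether $r^\ast$ is sent above or below itself.
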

This problem is related to finding the infinite Wasserstein distance between two empirical measures, see~\cite{Cha08}. The proof is provided in Appendix~\ref{app:max.diff}.
\begin{Lemma}\label{lem:med.bound}
    Given two sequences $A = (A^{(1)}, \dotsc, A^{(R)})$ and $B = (B^{(1)}, \dotsc, B^{(R)})$ with size $R=2\tilde{r}-1$, we have that
    \begin{equation}
        \left\lvert \mathrm{med} \{A\} -  \mathrm{med} \{B\} \right\rvert \leq \max_{r=1,\ldots,R} \left\lvert A^{(r)} -  B^{(r)} \right\rvert.
    \end{equation}
\end{Lemma}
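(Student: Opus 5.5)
The plan is to deduce the statement from Lemma~\ref{lem:max.diff}. The median of a sorted sequence is its middle entry, so it suffices to bound the distance between the middle entries of the sorted versions of $A$ and $B$.

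Let $\sigma_{A,\mathrm{asc}}$ and $\sigma_{B,\mathrm{asc}}$ be the sorting permutations of $A$ and $B$. Write $\hat{A}^{(r)}\coloneqq\sigma_{A,\mathrm{asc}}(A^{(r)})$ and $\hat{B}^{(r)}\coloneqq\sigma_{B,\mathrm{asc}}(B^{(r)})$ for the sorted sequences. By definition~\eqref{def:median}, $\mathrm{med}\{A\}=\hat{A}^{(\tilde{r})}$ and $\mathrm{med}\{B\}=\hat{B}^{(\tilde{r})}$. Since the index $\tilde{r}$ is one of the indices $1,\ldots,R$,
\begin{equation*}
\left\lvert \mathrm{med}\{A\}-\mathrm{med}\{B\}\right\rvert=\left\lvert \hat{A}^{(\tilde{r})}-\hat{B}^{(\tilde{r})}\right\rvert\leq \max_{r=1,\ldots,R}\left\lvert \hat{A}^{(r)}-\hat{B}^{(r)}\right\rvert .
\end{equation*}
Next, apply Lemma~\ref{lem:max.diff} with the particular pair $(\sigma_A,\sigma_B)=(\mathrm{id},\mathrm{id})$. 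This bounds the right-hand side by $\max_{r}\lvert A^{(r)}-B^{(r)}\rvert$, which is the claim.

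The only subtle point is ties. When some entries coincide, the sorting permutation is not unique, and the paper simply fixes one. The median value $\hat{z}^{(\tilde{r})}$ does not depend on which sorting permutation is chosen, because the sorted sequence of values is unique. Lemma~\ref{lem:max.diff} is stated for any pair of sorting permutations, so the argument is unaffected.

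If one prefers not to rely on Lemma~\ref{lem:max.diff}, a direct counting argument works. Let $\delta=\max_r\lvert A^{(r)}-B^{(r)}\rvert$. At least $\tilde{r}$ entries of $A$ satisfy $A^{(r)}\leq \mathrm{med}\{A\}$, and each of these indices gives $B^{(r)}\leq \mathrm{med}\{A\}+\delta$. So at least $\tilde{r}$ entries of $B$ are at most $\mathrm{med}\{A\}+\delta$, which forces $\mathrm{med}\{B\}\leq\mathrm{med}\{A\}+\delta$. The reverse inequality follows symmetrically.
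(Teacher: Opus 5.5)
Your main argument matches the paper's proof: bound $\lvert\hat{A}^{(\tilde{r})}-\hat{B}^{(\tilde{r})}\rvert$ by the maximum of the sorted differences, then apply Lemma~\ref{lem:max.diff} with the identity permutations to reach $\max_{r}\lvert A^{(r)}-B^{(r)}\rvert$. Your alternative order-statistic counting argument is also correct, and it is self-contained, so it does not depend on Lemma~\ref{lem:max.diff} at all.
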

\begin{proof}
    Let $\sigma_{A,\mathrm{asc}}$ be a permutation that sorts $A$ in ascending order such that
    \begin{equation}
        \hat{A}^{(r)}\coloneqq \sigma_{A,\mathrm{asc}}\left(A^{(i_r)}\right), \quad \text{for $1\leq r\leq R$},
    \end{equation}
    for appropriately chosen indices $i_1,\ldots,i_R$ such that $(\hat{A}^{(r)})_{r=1}^R$ is sorted in ascending order and $\sigma_{B,\mathrm{asc}}$ be a permutation that sorts $B$ in ascending order, that is,
        \begin{equation}
        \hat{B}^{(r)}\coloneqq \sigma_{B,\mathrm{asc}}\left(B^{(j_r)}\right), \quad \text{for $1\leq r\leq R$},
    \end{equation}
    for appropriately chosen indices $j_1,\ldots,j_R$. From the definition of the median in~\eqref{def:median}, it follows that 
    \begin{align}\label{eq:med.max}
            \left\lvert \mathrm{med} \{A\} -  \mathrm{med} \{B\} \right\rvert &= \left\lvert \hat{A}^{(\tilde{r})} -  \hat{B}^{(\tilde{r})} \right\rvert\nonumber\\
            &\leq \max_{r=1,\ldots,R} \left\lvert \hat{A}^{(r)} -  \hat{B}^{(r)} \right\rvert\nonumber\\
            &\leq \max_{r=1,\ldots,R} \left\lvert A^{(r)} -  B^{(r)} \right\rvert,
    \end{align}
    where the last step in~\eqref{eq:med.max} follows from Lemma~\ref{lem:max.diff}.
\end{proof}
We close this section with an elementary result that converts a probability bound for a single randomization into a probability bound for the median over several randomizations and subsequently into a moment bound.
\begin{Lemma}[Median concentration and moment bound]\label{lem:median.moment}
Let $R=2\tilde{r}-1$, where $\tilde{r}\in\mathbb{N}$, let $Z^{(1)},\ldots,Z^{(R)}$ be independent real-valued random variables, and let $\mu\in\mathbb{R}$. Assume that there exist $b\geq 0$, $0<\delta<1$, and $\Delta<\infty$ such that
\begin{equation}\label{eq:med.lem.hyp}
   \mathbb{P}\left(\left|Z^{(r)}-\mu\right|\geq b\right)\leq\delta\quad\text{for }1\leq r\leq R,
   \qquad\text{and}\qquad \left|Z^{(r)}-\mu\right|\leq\Delta\ \text{almost surely}.
\end{equation}
Then the sample median satisfies
\begin{equation}\label{eq:med.lem.prob}
   \mathbb{P}\left(\left|\mathrm{med}\left\{(Z^{(r)})_{r=1}^{R}\right\}-\mu\right|\geq b\right)\leq\left(4\delta\right)^{\tilde{r}},
\end{equation}
and, for every $p\geq 1$,
\begin{equation}\label{eq:med.lem.moment}
   \mathbb{E}\left[\left|\mathrm{med}\left\{(Z^{(r)})_{r=1}^{R}\right\}-\mu\right|^{p}\right]^{\frac{1}{p}}\leq b+\Delta\left(4\delta\right)^{\frac{\tilde{r}}{p}}.
\end{equation}
If $\mathcal{Z}_1,\ldots,\mathcal{Z}_J$ are $J$ such medians, with respective centers $\mu_1,\ldots,\mu_J$, each formed from $R$ independent variables satisfying~\eqref{eq:med.lem.hyp} with the same $b$, $\delta$, and $\Delta$ and the corresponding center, and not necessarily independent of one another, then
\begin{equation}\label{eq:med.lem.max}
   \mathbb{E}\left[\max_{1\leq j\leq J}\left\{\left|\mathcal{Z}_j-\mu_j\right|^{p}\right\}\right]^{\frac{1}{p}}\leq b+\Delta\left(J\left(4\delta\right)^{\tilde{r}}\right)^{\frac{1}{p}}.
\end{equation}
\end{Lemma}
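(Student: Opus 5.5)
The plan is to prove the three claims in order, each reducing to the previous one.

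\emph{Concentration bound~\eqref{eq:med.lem.prob}.} The key observation is combinatorial. If $|\mathrm{med}\{(Z^{(r)})_{r=1}^R\}-\mu|\geq b$, then either $\mathrm{med}\geq \mu+b$ or $\mathrm{med}\leq\mu-b$. In the first case, at least $\tilde r$ of the sorted values $\hat Z^{(\tilde r)},\ldots,\hat Z^{(R)}$ satisfy $Z^{(r)}\geq\mu+b$. In the second case, at least $\tilde r$ of the values satisfy $Z^{(r)}\leq \mu-b$. Either way, at least $\tilde r$ indices $r$ satisfy $|Z^{(r)}-\mu|\geq b$.

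Let $E_r$ denote the event $|Z^{(r)}-\mu|\geq b$. The events $E_r$ are independent, since the $Z^{(r)}$ are, and each has probability at most $\delta$. A union bound over subsets of size $\tilde r$ then gives
\begin{equation*}
\mathbb{P}\Bigl(\sum_{r=1}^R \mathbf{1}_{E_r}\geq \tilde r\Bigr)\leq \binom{R}{\tilde r}\delta^{\tilde r}\leq 2^{R}\delta^{\tilde r}\leq 4^{\tilde r}\delta^{\tilde r},
\end{equation*}
using $R=2\tilde r-1<2\tilde r$. This yields~\eqref{eq:med.lem.prob}.

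\emph{Moment bound~\eqref{eq:med.lem.moment}.} Write $X\coloneqq|\mathrm{med}-\mu|$. The median equals one of the $Z^{(r)}$, so $X\leq\Delta$ almost surely. Split
\begin{equation*}
X\leq b\,\mathbf{1}_{\{X<b\}}+X\,\mathbf{1}_{\{X\geq b\}}\leq b+\Delta\,\mathbf{1}_{\{X\geq b\}}.
\end{equation*}
Minkowski's inequality in $L^p$ then gives $\mathbb{E}[X^p]^{1/p}\leq b+\Delta\,\mathbb{P}(X\geq b)^{1/p}$. Inserting~\eqref{eq:med.lem.prob} gives $b+\Delta(4\delta)^{\tilde r/p}$.

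\emph{Maximum bound~\eqref{eq:med.lem.max}.} Let $X_j\coloneqq|\mathcal Z_j-\mu_j|$ and $Y\coloneqq\max_j X_j$. The same pointwise split gives $Y\leq b+\Delta\,\mathbf 1_{\{Y\geq b\}}$, because $Y\leq\Delta$ almost surely. No independence between the $\mathcal Z_j$ is needed, since
\begin{equation*}
\{Y\geq b\}=\bigcup_{j=1}^J\{X_j\geq b\},
\end{equation*}
and the union bound together with the first part applied to each $j$ gives $\mathbb{P}(Y\geq b)\leq J(4\delta)^{\tilde r}$. Minkowski's inequality then gives $b+\Delta(J(4\delta)^{\tilde r})^{1/p}$.

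The main point needing care is the combinatorial step in the first part: one must check that a median deviation forces at least $\tilde r$ individual deviations, with ties handled by the chosen sorting permutation. Alongside this, the crude bound $\binom{R}{\tilde r}\leq 2^R\leq 4^{\tilde r}$ produces exactly the constant $4$ in the statement. The remaining steps are routine.
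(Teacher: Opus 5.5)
Your proposal is correct and matches the paper's proof step for step: it uses the same observation that a median deviation of at least $b$ forces at least $\tilde{r}$ individual deviations, the same union bound with $\binom{2\tilde{r}-1}{\tilde{r}}\leq 2^{2\tilde{r}-1}\leq 4^{\tilde{r}}$, and the same union bound over the $J$ medians, with no independence needed among them. The only cosmetic difference is in the moment step: you apply Minkowski's inequality to the pointwise bound $X\leq b+\Delta\mathbf{1}_{\{X\geq b\}}$, whereas the paper splits $\mathbb{E}[X^p]$ at level $b$ and then uses subadditivity of $t\mapsto t^{1/p}$, and both give the identical bound.
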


\begin{proof}
The sample median of $R=2\tilde{r}-1$ values deviates from $\mu$ by at least $b$ only if at least $\tilde{r}$ of the individual values deviate from $\mu$ by at least $b$. By independence, the probability that a prescribed set of $\tilde{r}$ indices all deviate is at most $\delta^{\tilde{r}}$, and a union bound over the $\binom{2\tilde{r}-1}{\tilde{r}}\leq 2^{2\tilde{r}-1}\leq 4^{\tilde{r}}$ such sets gives~\eqref{eq:med.lem.prob}.

For~\eqref{eq:med.lem.moment}, write $X\coloneqq|\mathrm{med}\{(Z^{(r)})_{r=1}^{R}\}-\mu|$ and split the expectation at the level $b$, that is,
\begin{align}
   \mathbb{E}\left[X^p\right]=\mathbb{E}\left[X^p\mathbf{1}_{\{X<b\}}\right]+\mathbb{E}\left[X^p\mathbf{1}_{\{X\geq b\}}\right]{}&\leq b^p+\Delta^p\,\mathbb{P}\left(X\geq b\right)\nonumber\\
   {}&\leq b^p+\Delta^p\left(4\delta\right)^{\tilde{r}},
\end{align}
where $\mathbf{1}_{\{\cdot\}}$ denotes the indicator function. Here, the almost sure bound $X\leq\Delta$ follows from~\eqref{eq:med.lem.hyp}, since the median of $R$ values lies between the smallest and the largest of them. Taking $p$th roots and using subadditivity yields~\eqref{eq:med.lem.moment}. The bound~\eqref{eq:med.lem.max} follows in the same way after a union bound over the $J$ medians, where no independence is required.
\end{proof}
In the following section, we state the assumptions required to bound the DLmRQMC estimator error for the case that both integrands are analytic and provide error bound for this setting.

\subsection{\texorpdfstring{$f$}{Outer integrand} Lipschitz, \texorpdfstring{$g$}{inner integrand} analytic, \texorpdfstring{$f\circ I[g]$}{combined integrand} analytic}\label{sec:error.bounds}
We state the following assumptions before proceeding with the first central result of this study in Theorem~\ref{thm:moment} below.
\begin{Assumption}[Lipschitz continuity of $f$]\label{asu:Lipschitz.f}
    Let $f$ be Lipschitz continuous with constant $L$ on an interval $\mathcal{I}\subseteq\mathbb{R}$, that is, there exist $L<\infty$ and $M_{\mathcal{I}}<\infty$ such that
\begin{equation}
    \left|f(z)-f(z')\right|\leq L\left|z-z'\right|
\end{equation}
for all $z,z'\in\mathcal{I}$, where $\mathcal{I}\subseteq\mathbb{R}$ is an interval containing both the range $\{I[g(\bs{y},\cdot)]:\bs{y}\in[0,1]^{d_1}\}$ and, almost surely, the range of the inner estimator $\hat{\mu}_M^{(\tilde{r})}[g(\bs{y},\cdot)]$ for all $\bs{y}\in[0,1]^{d_1}$ and all $M\geq M_{\mathcal{I}}$.
\end{Assumption}

\begin{Assumption}[Uniform analyticity of $g$]\label{asu:uniform.analytic}
Let $g(\bs{y},\cdot):[0,1]^{d_2}\to\mathbb{R}$ be analytic uniformly in $\bs{y}$, that is, there exist constants $A_g<\infty$ and $\eta_g>0$, both independent of $\bs{y}$, such that
 \begin{equation}
   \left|\frac{\partial^{\lVert\bs{\alpha}\rVert_1}}{\partial x_1^{\alpha_1}\cdots\partial x_{d_2}^{\alpha_{d_2}}}g(\bs{y},\bs{x})\right|\leq \frac{A_g\,\lVert\bs{\alpha}\rVert_1!}{\eta_g^{\lVert\bs{\alpha}\rVert_1}}
 \end{equation}
 for all $\bs{x}\in[0,1]^{d_2}$, $\bs{y}\in[0,1]^{d_1}$, and $\bs{\alpha}\in\mathbb{N}_0^{d_2}$.
\end{Assumption}
\begin{Assumption}[Analyticity of \texorpdfstring{$f\circ I[g]$}{combined integrand}]\label{asu:analytic.h}
Let
\begin{equation}\label{def:outer.integrand}
h(\bs{y})\coloneqq f\left(I\left[g(\bs{y},\cdot)\right]\right)    
\end{equation}
be analytic on $[0,1]^{d_1}$, that is, there exist constants $A_h<\infty$ and $\eta_h>0$, such that
 \begin{equation}
   \left|\frac{\partial^{\lVert\bs{\alpha}\rVert_1}}{\partial y_1^{\alpha_1}\cdots\partial y_{d_1}^{\alpha_{d_1}}}h(\bs{y})\right|\leq \frac{A_h\,\lVert\bs{\alpha}\rVert_1!}{\eta_h^{\lVert\bs{\alpha}\rVert_1}}
 \end{equation}
 for all $\bs{y}\in[0,1]^{d_1}$ and $\bs{\alpha}\in\mathbb{N}_0^{d_1}$.
\end{Assumption}
\begin{Lemma}[Probability error bound for inner and outer errors]\label{lem:total.error}
    Let $f$ satisfy Assumption~\ref{asu:Lipschitz.f} with Lipschitz constant $L$ and let $g(\bs{y},\cdot):[0,1]^{d_2}\to\mathbb{R}$ satisfy Assumption~\ref{asu:uniform.analytic}. Moreover, let the composite integrand $h$ as defined in~\eqref{def:outer.integrand} satisfy Assumption~\ref{asu:analytic.h}. Then, for every $c<3\log(2)/\pi^2$, we have the following probability bounds:
 \begin{align}
\mathbb{P}\left(\left|\mathcal{E}_{\mathrm{in}}\right| \geq M^{-\frac{c}{d_2}\log(M)}\right) &\leq SN\left(\frac{4B_{d_2}}{\sqrt{\tilde{m}}}\right)^{\tilde{r}},
\label{eq:inner.error.prob.bound}\\
\mathbb{P}\left(\left|\mathcal{E}_{\mathrm{out}} \right| \geq N^{-\frac{c}{d_1}\log(N)}\right) &\leq \left(\frac{4B_{d_1}}{\sqrt{\tilde{n}}}\right)^{\tilde{s}}.
\label{eq:outer.error.prob.bound}
\end{align}
Here, the probability is taken with respect to the joint law of the independent random linear scramblings $\bs{\tau}^{(s)}$, $1\leq s\leq S$, and $\bs{\rho}^{(s,n,r)}$, $1\leq s\leq S$, $1\leq n\leq N$, $1\leq r\leq R$, that define the DLmRQMC estimator.
\end{Lemma}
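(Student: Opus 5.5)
We reduce both bounds to the single-randomization tail estimate of \cite{Pan24} for random linear scrambling of the Sobol' sequence. For an analytic integrand $\varphi$ on $[0,1]^d$ with $M=2^{\tilde m}$ points, we take that result in the form
\begin{equation*}
\mathbb{P}\left(\left|\mu_M^{(r)}[\varphi]-I[\varphi]\right|\geq M^{-\frac{c}{d}\log(M)}\right)\leq \frac{B_d}{\sqrt{\tilde m}},
\end{equation*}
where $c<3\log(2)/\pi^2$ and $B_d$ depends only on $d$, $c$ and the analyticity constants of $\varphi$. This is the step I expect to need the most care. We must check that the constant, and the range of $\tilde m$ for which the bound holds, depend only on $A_g,\eta_g$ through Assumption~\ref{asu:uniform.analytic}, and not on $\bs{y}$. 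Then a single $B_{d_2}$ serves every inner integrand $g(\bs{y}^{(s,n)},\cdot)$. The same applies on the outer side with $h$, using Assumption~\ref{asu:analytic.h}. If the cited bound is stated differently, we will restate it in this uniform form. We combine it with the median step in the first part of the proof of Lemma~\ref{lem:median.moment}: if $R=2\tilde r-1$ independent estimates each fail with probability at most $\delta$, the median fails with probability at most $(4\delta)^{\tilde r}$. Only this probability part is used, so the almost sure bound $\Delta$ is not needed.

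For the outer error, note that $\mathcal{E}_{\rm out}$ involves only the exact outer integrand $h$ from~\eqref{def:outer.integrand} and the scramblings $\bs{\tau}^{(s)}$. The quantities $Z^{(s)}=\mu_N^{(s)}[h]$, for $1\leq s\leq S$, are independent, and each satisfies the tail bound with $\delta=B_{d_1}/\sqrt{\tilde n}$ and threshold $b=N^{-\frac{c}{d_1}\log(N)}$. Applying the median step with $S=2\tilde s-1$ gives~\eqref{eq:outer.error.prob.bound} directly.

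For the inner error, we first compare the two outer medians using Lemma~\ref{lem:med.bound}, and then average over $n$:
\begin{equation*}
|\mathcal{E}_{\rm in}|\leq \max_{1\leq s\leq S}\left|\frac{1}{N}\sum_{n=1}^N\left(f(\hat{\mu}_M^{(\tilde r)}[g(\bs{y}^{(s,n)},\cdot)])-f(I[g(\bs{y}^{(s,n)},\cdot)])\right)\right|\leq L\max_{s,n}\left|\hat{\mu}_M^{(\tilde r)}[g(\bs{y}^{(s,n)},\cdot)]-I[g(\bs{y}^{(s,n)},\cdot)]\right|.
\end{equation*}
The second inequality uses Assumption~\ref{asu:Lipschitz.f}, valid for $M\geq M_{\mathcal{I}}$. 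This step resolves the index mismatch between the two outer medians noted after~\eqref{eq:total.error}.

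Next, fix $(s,n)$ and condition on $\bs{\tau}^{(s)}$, so that $\bs{y}^{(s,n)}$ is deterministic. The inner scramblings $\bs{\rho}^{(s,n,r)}$, for $1\leq r\leq R$, are independent of $\bs{\tau}^{(s)}$ and of each other. By the uniform tail bound and the median step, the conditional probability that the $(s,n)$ inner median error exceeds $M^{-\frac{c}{d_2}\log(M)}$ is at most $(4B_{d_2}/\sqrt{\tilde m})^{\tilde r}$. Integrating out $\bs{\tau}^{(s)}$ and taking a union bound over the $SN$ pairs $(s,n)$ gives~\eqref{eq:inner.error.prob.bound}. No independence across $(s,n)$ is needed for this, matching the comment after~\eqref{eq:med.lem.max}.

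The constant $L$ must be absorbed into the threshold. One option is to replace $c$ by a slightly larger $c'<3\log(2)/\pi^2$ and note that $L M^{-\frac{c'}{d_2}\log M}\leq M^{-\frac{c}{d_2}\log M}$ for $M$ large. The other is to read the statement with the Lipschitz constant absorbed, for example when $L\leq 1$. Since $c$ ranges over an open interval, the first option is the one I would use.
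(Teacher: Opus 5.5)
Your proposal is correct and follows essentially the same route as the paper: Lemma~\ref{lem:med.bound} plus the Lipschitz bound reduce $\mathcal{E}_{\rm in}$ to $L\max_{s,n}|\hat{\mu}_M^{(\tilde r)}[g(\bs{y}^{(s,n)},\cdot)]-I[g(\bs{y}^{(s,n)},\cdot)]|$, then the uniform single-randomization tail bound from \cite[Theorem 2]{Pan24} is combined with the median step of Lemma~\ref{lem:median.moment} conditionally on the outer points, followed by a union bound over the $SN$ pairs and absorption of $L$ by passing to $c'>c$, with the outer bound handled identically. Your explicit remarks that the Lipschitz step needs $M\geq M_{\mathcal{I}}$, and that both bounds hold only beyond thresholds in $M$ and $N$ set by the analyticity constants, are small clarifications that the paper leaves implicit.
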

\begin{proof}
We start by bounding the absolute inner error as follows using Lemma~\ref{lem:med.bound}:
\begin{align}\label{eq:inner.bound.max}
    \left|\mathcal{E}_{\rm{in}}\right|{}&=\left|\hat{\mu}_{N}^{(\tilde{s})}\left[f\left(\hat{\mu}_{M}^{(\tilde{r})}[g]\right)\right] - \hat{\mu}_{N}^{(\tilde{s})}\left[f\left(I[g]\right)\right]\right|\nonumber\\
    {}&\leq \max_{s=1,\ldots,S}\left|\frac{1}{N}\sum_{n=1}^{N}f\left(\hat{\mu}_{M}^{(\tilde{r})}\left[g(\bs{y}^{(s,n)},\cdot)\right]\right) - \frac{1}{N}\sum_{n=1}^{N}f\left(I\left[g(\bs{y}^{(s,n)},\cdot)\right]\right)\right|.
\end{align}
By the triangle inequality and Assumption~\ref{asu:Lipschitz.f}, it follows that
\begin{align}\label{eq:inner.error.bound}
{}&\max_{s=1,\ldots,S}\left|\frac{1}{N}\sum_{n=1}^{N}f\left(\hat{\mu}_{M}^{(\tilde{r})}\left[g(\bs{y}^{(s,n)},\cdot)\right]\right) - \frac{1}{N}\sum_{n=1}^{N}f\left(I\left[g(\bs{y}^{(s,n)},\cdot)\right]\right)\right|\nonumber\\
{}&\leq\max_{s=1,\ldots,S}\frac{1}{N}\sum_{n=1}^{N}\left|f\left(\hat{\mu}_{M}^{(\tilde{r})}\left[g(\bs{y}^{(s,n)},\cdot)\right]\right) - f\left(I\left[g(\bs{y}^{(s,n)},\cdot)\right]\right)\right|\nonumber\\
{}&\leq\max_{s=1,\ldots,S}\frac{L}{N}\sum_{n=1}^{N}\left|\hat{\mu}_{M}^{(\tilde{r})}\left[g(\bs{y}^{(s,n)},\cdot)\right] - I\left[g(\bs{y}^{(s,n)},\cdot)\right]\right|\nonumber\\
{}&\leq\max_{\substack{s=1,\ldots,S\\ n=1,\ldots,N}}L\left|\underbrace{\hat{\mu}_{M}^{(\tilde{r})}\left[g(\bs{y}^{(s,n)},\cdot)\right] - I\left[g(\bs{y}^{(s,n)},\cdot)\right]}_{\eqqcolon\mathcal{E}_{R,M}(\bs{y}^{(s,n)})}\right|.
\end{align}
From Assumption~\ref{asu:uniform.analytic} and~\cite[Theorem 2]{Pan24}, it follows for every $c<3\log(2)/\pi^2$ that there exists $B_{d_2}<\infty$ depending on $c$ and $d_2$ and there exists $M_0<\infty$ depending on $c$, $d_2$, $A_{g}$, and $\eta_{g}$ such that
\begin{equation}\label{eq:delta.inner.s.n}
    \mathbb{P}\left(\left|\frac{1}{M}\sum_{m=1}^Mg(\bs{y},\bs{x}^{(r,m)})- I\left[g(\bs{y},\cdot)\right] \right|\geq M^{-\frac{c}{d_2}\log(M)}\Bigg|\bs{y}\right)\leq \frac{B_{d_2}}{\sqrt{\tilde{m}}}
\end{equation}
for all $\bs{y}\in[0,1]^{d_1}$, all $r=1,\ldots,R$, and for all $2^{\tilde{m}}=M\geq M_0$. Applying Lemma~\ref{lem:median.moment}\eqref{eq:med.lem.prob} with $Z^{(r)}=\frac{1}{M}\sum_{m}g(\bs{y},\bs{x}^{(r,m)})$, $\mu=I[g(\bs{y},\cdot)]$, and $b=M^{-c\log(M)/d_2}$ yields
\begin{equation}\label{eq:delta.inner.y}
    \mathbb{P}\left(\left|\mathcal{E}_{R,M}(\bs{y}) \right|\geq M^{-\frac{c}{d_2}\log(M)}\Bigg|\bs{y}\right)\leq \left(\frac{4B_{d_2}}{\sqrt{\tilde{m}}}\right)^{\tilde{r}},
\end{equation}
because the inner scramblings $\bs{\rho}^{(s,n,r)}$ are iid and independent of the outer scramblings $\bs{\tau}^{(s)}$, where the right-hand side is independent of $\bs{y}\in[0,1]^{d_1}$, and thus it follows that
\begin{equation}
    \mathbb{P}\left(\left|\mathcal{E}_{R,M}(\bs{y}) \right|\geq M^{-\frac{c}{d_2}\log(M)}\right)\leq \left(\frac{4B_{d_2}}{\sqrt{\tilde{m}}}\right)^{\tilde{r}}
\end{equation}
unconditionally for any $\bs{y}\in[0,1]^{d_1}$. It follows that
\begin{equation}\label{eq:delta.inner.max}
    \mathbb{P}\left(\max_{\substack{s=1,\ldots,S\\ n=1,\ldots,N}}\left|\mathcal{E}_{R,M}(\bs{y}^{(s,n)}) \right|\geq M^{-\frac{c}{d_2}\log(M)}\right)\leq SN\left(\frac{4B_{d_2}}{\sqrt{\tilde{m}}}\right)^{\tilde{r}}
\end{equation}
and hence, by~\eqref{eq:inner.error.bound}, it follows that
\begin{equation}
    \mathbb{P}\left(\left|\mathcal{E}_{\mathrm{in}}\right|\geq LM^{-\frac{c}{d_2}\log(M)}\right)\leq SN\left(\frac{4B_{d_2}}{\sqrt{\tilde{m}}}\right)^{\tilde{r}}.
\end{equation}
To absorb $L$, let $c<c'<3\log(2)/\pi^2$ and replace $c$ with $c'$ in the above derivations. Then, let $M_0$ depend on $c$, $d_2$, $A_g$, $\eta_g$, and $L$ such that $LM^{-c'\log(M)/d_2}\leq M^{-c\log(M)/d_2}$ for all $M\geq M_0$. It follows that~\eqref{eq:inner.error.prob.bound} holds for all $M\geq M_0$ now also depending on $L$.

From Assumption~\ref{asu:analytic.h} and~\cite[Theorem 2]{Pan24}, it follows for every $c<3\log(2)/\pi^2$ that there exists $B_{d_1}<\infty$ depending on $c$ and $d_1$ and there exists $N_0<\infty$ depending on $c$, $d_1$, $A_{h}$, and $\eta_{h}$ such that
\begin{equation}
    \mathbb{P}\left(\left|\frac{1}{N}\sum_{n=1}^{N}h(\bs{y}^{(s,n)})-I[h]\right|\geq N^{-\frac{c}{d_1}\log(N)}\right)\leq \frac{B_{d_1}}{\sqrt{\tilde{n}}}
\end{equation}
for all $2^{\tilde{n}}=N\geq N_0$. Applying Lemma~\ref{lem:median.moment}\eqref{eq:med.lem.prob} with $Z^{(s)}=\frac{1}{N}\sum_{n}h(\bs{y}^{(s,n)})$, $\mu=I[h]$, and $b=N^{-c\log(N)/d_1}$ yields~\eqref{eq:outer.error.prob.bound}. 
\end{proof}
\begin{Theorem}[Moment bound]\label{thm:moment}
Given the Assumptions of Lemma~\ref{lem:total.error}, for every $c<3\log(2)/\pi^2$, there exists $N_0<\infty$ depending on $c$, $d_1$, $A_h$, and $\eta_h$, and there exists $M_0<\infty$ depending on $c$, $d_2$, $A_g$, $\eta_g$, and $L$, such that
\begin{align}\label{eq:cor}
  \mathbb{E}\left[\left|\mathcal{E}_{\mathrm{tot}}\right|^p\right]^{\frac{1}{p}}\leq{}& N^{-\frac{c}{d_1}\log(N)}+\left(\frac{4B_{d_1}}{\sqrt{\tilde{n}}}\right)^{\frac{\tilde{s}}{p}}2\left\lVert h\right\rVert_{L^{\infty}([0,1]^{d_1})}
  \nonumber\\
  {}&+LM^{-\frac{c}{d_2}\log(M)}+L\left(S\left(\frac{4B_{d_2}}{\sqrt{\tilde{m}}}\right)^{\tilde{r}}\right)^{\frac{1}{p}}2\max_{\bs{y}\in[0,1]^{d_1}}\left\lVert g(\bs{y},\cdot)\right\rVert_{L^{\infty}([0,1]^{d_2})}
\end{align}
for every $p\geq 1$, where $\mathcal{E}_{\rm{tot}}$ is as in~\eqref{eq:total.error}, for all $N\geq N_0$, and $M\geq M_0$. The bound in~\eqref{eq:cor} tends to zero super-polynomially provided that $S=\Theta((\tilde{n})^2)$, $R=\Theta((\tilde{m})^2)$, and $\tilde{n}=\Theta((\tilde{m})^2)$.
\end{Theorem}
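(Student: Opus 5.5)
The plan is to split the total error as in~\eqref{eq:total.error}, $\mathcal{E}_{\rm tot}=\mathcal{E}_{\rm in}+\mathcal{E}_{\rm out}$, and apply Minkowski's inequality:
\[
\mathbb{E}[|\mathcal{E}_{\rm tot}|^p]^{1/p}\leq \mathbb{E}[|\mathcal{E}_{\rm out}|^p]^{1/p}+\mathbb{E}[|\mathcal{E}_{\rm in}|^p]^{1/p}.
\]
Each term is then bounded with the moment versions of Lemma~\ref{lem:median.moment}, using the single-randomization probability bounds that were already derived in the proof of Lemma~\ref{lem:total.error}.

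\emph{Outer term.} Set $Z^{(s)}=\frac{1}{N}\sum_n h(\bs{y}^{(s,n)})$ and $\mu=I[h]$; the $Z^{(s)}$ are independent because the scramblings $\bs{\tau}^{(s)}$ are. Take $b=N^{-c\log(N)/d_1}$ and $\delta=B_{d_1}/\sqrt{\tilde n}$, valid for $N\geq N_0$ by~\cite[Theorem 2]{Pan24}. Since $|Z^{(s)}-I[h]|\leq 2\|h\|_{L^\infty}$ almost surely, we may take $\Delta=2\|h\|_{L^\infty}$. Then~\eqref{eq:med.lem.moment} gives exactly the first two terms of~\eqref{eq:cor}.

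\emph{Inner term.} By~\eqref{eq:inner.bound.max}--\eqref{eq:inner.error.bound}, $|\mathcal{E}_{\rm in}|\leq L\max_{s,n}|\mathcal{E}_{R,M}(\bs{y}^{(s,n)})|$, where each $\mathcal{E}_{R,M}(\bs{y}^{(s,n)})$ is a median of $R$ variables. Conditionally on $\bs{y}^{(s,n)}$, these variables are independent, with center $I[g(\bs{y}^{(s,n)},\cdot)]$, and they satisfy~\eqref{eq:med.lem.hyp} with
\[
b=M^{-c\log(M)/d_2},\qquad \delta=B_{d_2}/\sqrt{\tilde m},\qquad \Delta=2\max_{\bs y}\|g(\bs y,\cdot)\|_{L^\infty}.
\]
All three constants are uniform in $\bs y$ by Assumption~\ref{asu:uniform.analytic}. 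Applying~\eqref{eq:med.lem.max} conditionally and then taking the outer expectation yields $L b+L\Delta(J(4\delta)^{\tilde r})^{1/p}$, with $J$ the number of medians entering the maximum.

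A naive union bound gives $J=SN$, but the statement has only $S$. To recover it, I would first use the intermediate inequality in~\eqref{eq:inner.error.bound}, namely the maximum over $s$ of the averages $\frac1N\sum_n$. For each fixed $s$, convexity of $x\mapsto x^p$ gives
\[
\Big(\tfrac1N\sum_n|\mathcal{E}_{R,M}|\Big)^p\leq \tfrac1N\sum_n |\mathcal{E}_{R,M}|^p .
\]
Splitting each $|\mathcal{E}_{R,M}(\bs{y}^{(s,n)})|^p$ at level $b$ gives $b^p+\Delta^p\mathbf{1}_{\{|\mathcal{E}_{R,M}|\geq b\}}$, and averaging over $n$ leaves an expected exceedance fraction of at most $(4\delta)^{\tilde r}$ for each $s$. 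The maximum over $s$ is then bounded by the sum over $s$, which produces the factor $S$. This averaging step is the part I expect to need the most care. The fallback is to accept $SN$, which is harmless for the super-polynomial conclusion. The Lipschitz constant is applied only where the inner estimator lies in $\mathcal I$, which holds for $M\geq M_{\mathcal I}$; I would enlarge $M_0$ accordingly. The remaining terms combine by subadditivity of $p$th roots.

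\emph{Super-polynomial decay.} The terms $N^{-c\log N/d_1}$ and $M^{-c\log M/d_2}$ are already super-polynomial. For $(4B_{d_1}/\sqrt{\tilde n})^{\tilde s/p}$ with $\tilde s=\Theta(\tilde n^2)$, the logarithm equals $-\Theta(\tilde n^2\log\tilde n)$, which is $o(-(\log N)^k)$ decay for every $k$ once $\sqrt{\tilde n}>4B_{d_1}$. For the inner tail, $S(4B_{d_2}/\sqrt{\tilde m})^{\tilde r}$ with $S=\Theta(\tilde n^2)=\Theta(\tilde m^4)$ and $\tilde r=\Theta(\tilde m^2)$ equals $\exp(O(\log\tilde m)-\Theta(\tilde m^2\log\tilde m))$, which is again super-polynomial in $M$. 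If the $SN$ fallback is used, the extra factor $N=2^{\Theta(\tilde m^2)}$ is still dominated by $\tilde m^{-\Theta(\tilde m^2)}$, since $\log\tilde m\to\infty$.
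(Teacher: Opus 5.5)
Your proposal is correct and follows essentially the paper's proof: Minkowski's inequality to split $\mathcal{E}_{\rm tot}$, the outer term via \eqref{eq:med.lem.moment} with the same $b$, $\delta$ and $\Delta=2\lVert h\rVert_{L^{\infty}([0,1]^{d_1})}$, and the inner term reduced through the Lipschitz bound to the medians $\mathcal{E}_{R,M}(\bs{y}^{(s,n)})$ with a union bound over $s$ only. The one variation is how you avoid the factor $N$: the paper uses $\max_{s}\frac{1}{N}\sum_{n}\leq\frac{1}{N}\sum_{n}\max_{s}$, Minkowski over $n$, and \eqref{eq:med.lem.max} with $J=S$, whereas your Jensen step, pointwise truncation at $b$, and $\max_s\leq\sum_s$ followed by linearity of expectation give the identical bound, so the $SN$ fallback is never needed (and in your decay paragraph the intended statement is that the term is $o(N^{-k})$ for every $k$, not ``$o(-(\log N)^k)$'').
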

In particular, taking $p=2$ in Theorem~\ref{thm:moment} bounds the RMSE of the DLmRQMC estimator. Moreover, in contrast to Lemma~\ref{lem:total.error}, no relation between $\log_2(N)$ and $\log_2(M)$ is required.
\begin{proof}
    It follows from~\eqref{eq:total.error} and Minkowski's inequality that
    \begin{equation}
         \mathbb{E}\left[\left|\mathcal{E}_{\mathrm{tot}}\right|^p\right]^{\frac{1}{p}}\leq \mathbb{E}\left[\left|\mathcal{E}_{\mathrm{in}}\right|^p\right]^{\frac{1}{p}}+\mathbb{E}\left[\left|\mathcal{E}_{\mathrm{out}}\right|^p\right]^{\frac{1}{p}}
    \end{equation}
    for every $p\geq 1$. The result in~\eqref{eq:outer.error.prob.bound} together with Lemma~\ref{lem:median.moment}\eqref{eq:med.lem.moment} applied with 
    \begin{equation}
        \begin{cases}
            Z^{(s)}=\frac{1}{N}\sum_{n=1}^{N}h(\bs{y}^{(s,n)}),\\
            \mu=I[h], \\
            b=N^{-c\log(N)/d_1},\\
            \delta=B_{d_1}/\sqrt{\tilde{n}},\\
            \Delta=2\left\lVert h\right\rVert_{L^{\infty}([0,1]^{d_1})}
        \end{cases}
    \end{equation}
    yields
    \begin{equation}
        \mathbb{E}\left[\left|\mathcal{E}_{\mathrm{out}}\right|^p\right]^{\frac{1}{p}}\leq N^{-\frac{c}{d_1}\log(N)}+\left(\frac{4B_{d_1}}{\sqrt{\tilde{n}}}\right)^{\frac{\tilde{s}}{p}}2\left\lVert h\right\rVert_{L^{\infty}([0,1]^{d_1})}.
    \end{equation}
    We proceed as in~\eqref{eq:inner.bound.max} and~\eqref{eq:inner.error.bound} to obtain
    \begin{align}\label{eq:max.sum}
        \left|\mathcal{E}_{\mathrm{in}}\right|{}&\leq\max_{s=1,\ldots,S}\frac{L}{N}\sum_{n=1}^{N}\left|\mathcal{E}_{R,M}(\bs{y}^{(s,n)})\right|\nonumber\\
        {}&\leq\frac{L}{N}\sum_{n=1}^{N}\max_{s=1,\ldots,S}\left|\mathcal{E}_{R,M}(\bs{y}^{(s,n)})\right|.
    \end{align}
    From Minkowski's inequality, it follows that
    \begin{align}\label{eq:Minkowski.n}
        \mathbb{E}\left[\left|\mathcal{E}_{\mathrm{in}}\right|^{p}\right]^{\frac{1}{p}}
        {}&\leq\frac{L}{N}\sum_{n=1}^{N}\mathbb{E}\left[\left(\max_{s=1,\ldots,S}\left|\mathcal{E}_{R,M}(\bs{y}^{(s,n)})\right|\right)^p\right]^{\frac{1}{p}}\nonumber\\
        {}&\leq L \max_{n=1,\ldots,N}\mathbb{E}\left[\left(\max_{s=1,\ldots,S}\left|\mathcal{E}_{R,M}(\bs{y}^{(s,n)})\right|\right)^p\right]^{\frac{1}{p}}.
    \end{align}
Moreover, it follows almost surely that
\begin{align}\label{eq:almost.surely}
    \left|\mathcal{E}_{R,M}(\bs{y})\right|{}&\leq\left|\mathrm{med}\left\{\left(\frac{1}{M}\sum_{m=1}^{M}g(\bs{y},\bs{x}^{(r,m)})\right)_{r=1}^{R}\right\}\right|+\left|I\left[g(\bs{y},\cdot)\right]\right|\nonumber\\
    {}&\leq 2\max_{\bs{y}\in[0,1]^{d_1}}\left\lVert g(\bs{y},\cdot)\right\rVert_{L^{\infty}([0,1]^{d_2})}.
\end{align}
Applying Lemma~\ref{lem:median.moment}\eqref{eq:med.lem.max} with 
\begin{align}
\begin{cases}
J{}&=S,\nonumber\\ 
Z_s^{(r)}{}&=\frac{1}{M}\sum_{m=1}^{M}g(\bs{y}^{(s,n)},\bs{x}^{(s,n,r,m)}),\nonumber\\ 
\mathcal{Z}_s{}&=\mathrm{med}\{(Z_s^{(r)})_{r=1}^R\},\nonumber\\  \mu_s{}&=I\left[g(\bs{y}^{(s,n)},\cdot)\right],\nonumber\\ 
b{}&=M^{-\frac{c}{d_2}\log(M)},\quad \text{and}\nonumber\\ 
\Delta{}&=2\max_{\bs{y}\in[0,1]^{d_1}}\left\lVert g(\bs{y},\cdot)\right\rVert_{L^{\infty}([0,1]^{d_2})}\nonumber\\ 
\end{cases}
\end{align}

together with~\eqref{eq:delta.inner.y} and~\eqref{eq:almost.surely} yields
\begin{align}
    {}&\mathbb{E}\left[\left(\max_{s=1,\ldots,S}\left|\mathcal{E}_{R,M}(\bs{y}^{(s,n)})\right|\right)^p\Bigg|\left(\bs{y}^{(s,n)}\right)_{s=1}^S\right]^{\frac{1}{p}}\nonumber\\
    {}&\leq M^{-\frac{c}{d_2}\log(M)}+2\max_{\bs{y}\in[0,1]^{d_1}}\left\lVert g(\bs{y},\cdot)\right\rVert_{L^{\infty}([0,1]^{d_2})}\left(S\left(\frac{4B_{d_2}}{\sqrt{\tilde{m}}}\right)^{\tilde{r}}\right)^{\frac{1}{p}},
\end{align}
where the right-hand side is independent of $\bs{y}^{(s,n)}\in[0,1]^{d_1}$ for $1\leq s\leq S$, $1\leq n\leq N$. Thus, it follows that
\begin{align}
    {}&\mathbb{E}\left[\left(\max_{s=1,\ldots,S}\left|\mathcal{E}_{R,M}(\bs{y}^{(s,n)})\right|\right)^p\right]\nonumber\\
    {}&\leq \left(M^{-\frac{c}{d_2}\log(M)}+2\max_{\bs{y}\in[0,1]^{d_1}}\left\lVert g(\bs{y},\cdot)\right\rVert_{L^{\infty}([0,1]^{d_2})}\left(S\left(\frac{4B_{d_2}}{\sqrt{\tilde{m}}}\right)^{\tilde{r}}\right)^{\frac{1}{p}}\right)^p.
\end{align}
Now, it follows that
\begin{align}
    {}&\max_{n=1\ldots,N}\mathbb{E}\left[\left(\max_{s=1,\ldots,S}\left|\mathcal{E}_{R,M}(\bs{y}^{(s,n)})\right|\right)^p\right]^{\frac{1}{p}}\nonumber\\
    {}&\leq M^{-\frac{c}{d_2}\log(M)}+2\max_{\bs{y}\in[0,1]^{d_1}}\left\lVert g(\bs{y},\cdot)\right\rVert_{L^{\infty}([0,1]^{d_2})}\left(S\left(\frac{4B_{d_2}}{\sqrt{\tilde{m}}}\right)^{\tilde{r}}\right)^{\frac{1}{p}},
\end{align}
and finally, substituting into~\eqref{eq:Minkowski.n} yields
\begin{align}
    \mathbb{E}\left[\left|\mathcal{E}_{\mathrm{in}}\right|^{p}\right]^{\frac{1}{p}}
        {}&\leq LM^{-\frac{c}{d_2}\log(M)}+L\left(S\left(\frac{4B_{d_2}}{\sqrt{\tilde{m}}}\right)^{\tilde{r}}\right)^{\frac{1}{p}}2\max_{\bs{y}\in[0,1]^{d_1}}\left\lVert g(\bs{y},\cdot)\right\rVert_{L^{\infty}([0,1]^{d_2})}.
\end{align}
\end{proof}
\begin{Remark}[Pre-asymptotic behavior of the DLmRQMC estimator]
    The bounds on the probabilities in Lemma~\ref{lem:total.error} only apply for sample sizes $N\geq N_0$ and $M\geq M_0$. These quantities tend to infinity as $c$ approaches $3\log(2)/\pi^2$ from below and are used to absorb the lower-order factor $2^{B_1\tilde{m}\log(\tilde{m})}$ appearing in the proof of~\cite[Corollary 3.]{Pan24}. The result is understood to be asymptotic in $N$ and $M$, and in particular the constants $B_{d_1}$ and $B_{d_2}$, corresponding to $B_2$ in the notation of~\cite{Pan24}, may not be quantitatively meaningful for practical sample sizes. We demonstrate in Section~\ref{sec:numex} that the super-polynomial convergence guaranteed by Lemma~\ref{lem:total.error} is already apparent at practically feasible sizes, in contrast with the RQMC estimator, whose observed rate remains $3/2$ over the same range.
\end{Remark}

Next, we consider the case where the composite outer integrand $h$ as defined in~\eqref{def:outer.integrand} is not analytic and instead only satisfies certain smoothness assumptions outlined below. We demonstrate in the following section that this setting applies to the EIG of an experiment after an appropriate importance sampling scheme is applied.

\section{Expected information gain estimation}
\label{sec:eig}
\subsection{Definition and notation}
The EIG is a popular example of a nested integral. We provide a brief derivation of the EIG to motivate its relevance in BOED. Let
\begin{equation}\label{eq:data.model}
    \bs{Y}=\bs{G}(\bs{\theta},\bs{\xi})+\bs{\varepsilon}
\end{equation}
be a model for observation data $\bs{Y}\in\mathbb{R}^{d_y}$. Here, $\bs{\varepsilon}\in\mathbb{R}^{d_y}$ denotes additive observation noise from a Gaussian distribution $\mathcal{N}(\bs{0},\bs{\Sigma}_{\bs{\varepsilon}})$, where $\bs{0}$ is the zero vector in $\mathbb{R}^{d_y}$ and $\bs{\Sigma}_{\bs{\epsilon}}\in\mathbb{R}^{d_y\times d_y}$ is the noise covariance. We assume that $\bs{\Sigma}_{\bs{\epsilon}}$ is known and that its smallest eigenvalue is bounded away from zero. Moreover, $\bs{\theta}\in\bs{\Theta}\subset\mathbb{R}^{d_{\theta}}$ are unknown model parameters and $\bs{\xi}\in\bs{\Xi}\subset\mathbb{R}^{d_{\xi}}$ are design parameters. Finally,  $\bs{G}:\bs{\Theta}\times\bs{\Xi}\to \mathbb{R}^{d_y}$ is a model function that relates the parameters and design to the data. Throughout this work, we consider the design fixed and omit it in the notation. 

In the Bayesian setting, a prior distribution $\pi(\cdot)$ is prescribed to $\bs{\theta}$, encoding knowledge about the model parameters before observing any data. Finding parameters $\bs{\theta}$ for given observation data $\bs{Y}$ such that the equation in~\eqref{eq:data.model} is satisfied is known as an inverse problem. In the Bayesian setting, the goal is to find a probability distribution $\pi(\bs{\theta}|\bs{Y})$, called the posterior distribution. This distribution encompasses our knowledge regarding the parameters after observing data $\bs{Y}$. The field of BOED is concerned with the task of finding experiment designs $\bs{\xi}$ such that the Bayesian inverse problem can be solved easily or optimally. The design must be chosen before data observations are made, and thus we choose an objective function that involves an expectation over $\bs{Y}$. One natural choice for such an objective function is the mutual information between $\bs{\theta}$ and $\bs{Y}$, which is equivalent to the EIG, or also called the expected Kullback--Leibler divergence between the posterior and prior distributions. The posterior is typically inaccessible and Bayes' Theorem is used to rewrite the EIG in terms of the likelihood $\pi(\bs{Y}|\bs{\theta})$ and the evidence $\pi(\bs{Y})$ as follows:
\begin{align}
    EIG\coloneqq{}&\mathbb{E}_{\bs{Y}}[D_{\rm{KL}}(\pi(\bs{\theta}|\bs{Y})||\pi(\bs{\theta}))],\nonumber\\ 
    {}&=\int_{\mathcal{Y}}\int_{\bs{\Theta}}\left[\log\left(\pi(\bs{\theta}|\bs{Y})\right)-\log(\pi(\bs{\theta}))\right]\pi(\bs{\theta}|\bs{Y})\mathrm{d}\bs{\theta}\pi(\bs{Y})\mathrm{d}\bs{Y}.
\end{align}
We typically do not have access to the evidence $\pi(\bs{Y})$ nor the posterior $\pi(\bs{\theta}|\bs{Y})$, so we write the evidence as follows:
\begin{equation}
    \pi(\bs{Y})=\int_{\bs{\Theta}}\pi(\bs{Y}|\bs{\theta})\pi(\bs{\theta})\mathrm{d}\bs{\theta},
\end{equation}
and write the posterior as follows, using Bayes' Theorem:
\begin{equation}
    \pi(\bs{\theta}|\bs{Y})=\frac{\pi(\bs{Y}|\bs{\theta})\pi(\bs{\theta})}{\int_{\bs{\Theta}}\pi(\bs{Y}|\bs{\theta}')\pi(\bs{\theta}')\mathrm{d}\bs{\theta}'}.
\end{equation}
It follows from~\eqref{eq:data.model} for additive Gaussian noise that the likelihood term is also Gaussian, that is,
\begin{align}
    \pi(\bs{Y}|\bs{\theta}'){}&=\frac{e^{-\frac{1}{2}\left\lVert \bs{Y}-\bs{G}(\bs{\theta}')\right\rVert_{\bs{\Sigma}_{\bs{\varepsilon}}^{-1}}^2}}{\det(2\pi\bs{\Sigma}_{\bs{\varepsilon}})^{\frac{1}{2}}},\nonumber\\
    {}&=\frac{e^{-\frac{1}{2}\left\lVert \bs{G}(\bs{\theta})+\bs{\varepsilon}-\bs{G}(\bs{\theta}')\right\rVert_{\bs{\Sigma}_{\bs{\varepsilon}}^{-1}}^2}}{\det(2\pi\bs{\Sigma}_{\bs{\varepsilon}})^{\frac{1}{2}}},
\end{align}
and
\begin{equation}
    \pi(\bs{Y}|\bs{\theta})=\frac{e^{-\frac{1}{2}\left\lVert \bs{\varepsilon}\right\rVert_{\bs{\Sigma}_{\bs{\varepsilon}}^{-1}}^2}}{\det(2\pi\bs{\Sigma}_{\bs{\varepsilon}})^{\frac{1}{2}}},
\end{equation}
where 
\begin{equation}
\left\lVert\cdot\right\rVert_{\bs{\Sigma}_{\bs{\varepsilon}}^{-1}}^2\coloneqq \left\lVert {\bs{\Sigma}_{\bs{\varepsilon}}^{-\frac{1}{2}}}\cdot\right\rVert^2.
\end{equation}
Thus, the EIG is written as follows:
\begin{align}\label{eq:EIG}
    EIG{}&= \int_{\bs{\Theta}}\int_{\mathcal{Y}}\left[\log\left(\frac{e^{-\frac{1}{2}\left\lVert \bs{\varepsilon}\right\rVert_{\bs{\Sigma}_{\bs{\varepsilon}}^{-1}}^2}}{\det(2\pi\bs{\Sigma}_{\bs{\varepsilon}})^{\frac{1}{2}}}\right)\right.\nonumber\\
    {}&\quad\left.-\log\left(\int_{\bs{\Theta}}\frac{e^{-\frac{1}{2}\left\lVert \bs{G}(\bs{\theta})+\bs{\varepsilon}-\bs{G}(\bs{\theta}')\right\rVert_{\bs{\Sigma}_{\bs{\varepsilon}}^{-1}}^2}}{\det(2\pi\bs{\Sigma}_{\bs{\varepsilon}})^{\frac{1}{2}}}\pi(\bs{\theta}')\di{}\bs{\theta}'\right)\right]\frac{e^{-\frac{1}{2}\left\lVert \bs{\varepsilon}\right\rVert_{\bs{\Sigma}_{\bs{\varepsilon}}^{-1}}^2}}{\det(2\pi\bs{\Sigma}_{\bs{\varepsilon}})^{\frac{1}{2}}}\di{}\bs{\varepsilon}\pi(\bs{\theta})\di{}\bs{\theta}.
\end{align}
Notice that the normalization constants inside the logarithms cancel with each other. Moreover, the first term is simply a Gaussian integral with a closed-form expression, and we thus focus on the second term, which constitutes a nested integral. We assume throughout this work that the prior distribution $\pi(\cdot)$ is the uniform distribution in dimension $d_2$, and thus we assume that $\bs{\Theta}=[0,1]^{d_2}$. Moreover, we assume that $\bs{\varepsilon}$ is a centered Gaussian random variable in dimension $d_1-d_2$ with $d_1>d_2$. It follows that $\mathcal{Y}=\mathbb{R}^{d_1-d_2}$. Employing the QMC method to estimate the outer integral in the EIG expression is not straightforward, as the integration domain of the Gaussian distribution, that is, $\mathbb{R}^{d_1-d_2}$, must be mapped back to $[0,1]^{d_1-d_2}$. The standard approach is to apply the inverse CDF of the standard normal distribution, thus introducing singularities at the boundaries. One obtains the following nested integral:
\begin{align}\label{eq:EIG.uh}
    I[f(I[g])]= \int_{[0,1]^{d_1}}\log\left(\int_{[0,1]^{d_2}}e^{-\frac{1}{2}\left\lVert \bs{G}(\bs{y}_1)+\bs{\Sigma}_{\bs{\varepsilon}}^{\frac{1}{2}}\bs{\Phi}^{-1}(\bs{y}_2)-\bs{G}(\bs{x})\right\rVert_{\bs{\Sigma}_{\bs{\varepsilon}}^{-1}}^2}\di{}\bs{x}\right)\di{}\bs{y},
\end{align}
where $\bs{y}_1\coloneqq \bs{\theta}$, $\bs{x}\coloneqq\bs{\theta}'$, $\bs{y}=(\bs{y}_1,\bs{y}_2)$, and
\begin{equation}
    \bs{\Sigma}_{\bs{\varepsilon}}=\bs{\Sigma}_{\bs{\varepsilon}}^{\frac{1}{2}}\bs{\Sigma}_{\bs{\varepsilon}}^{\frac{1}{2}}.
\end{equation}
It follows that $f(z)=\log(z)$ and
\begin{equation}\label{def:g}
    g(\bs{y},\bs{x})=\exp\left(-\frac{1}{2}\left\lVert \bs{G}(\bs{y}_1)+\bs{\Sigma}_{\bs{\varepsilon}}^{\frac{1}{2}}\bs{\Phi}^{-1}(\bs{y}_2)-\bs{G}(\bs{x})\right\rVert_{\bs{\Sigma}_{\bs{\varepsilon}}^{-1}}^2\right)
\end{equation}
for the EIG application. It follows from~\eqref{eq:EIG} that
\begin{equation}
    EIG=-\frac{d_1-d_2}{2}-I[f(I[g])].
\end{equation}
We introduce the following notation:
\begin{align}\label{def:Lambda}
    \Lambda(\bs{y},\bs{x}){}&\coloneqq\frac{1}{2}\left\lVert \bs{\Sigma}_{\bs{\varepsilon}}^{-\frac{1}{2}}\left(\bs{G}(\bs{y}_1)-\bs{G}(\bs{x})\right)+\bs{\Phi}^{-1}(\bs{y}_2)\right\rVert^2\nonumber\\
    {}&=\frac{1}{2}\left\lVert \bs{G}(\bs{y}_1)+\bs{\Sigma}_{\bs{\varepsilon}}^{\frac{1}{2}}\bs{\Phi}^{-1}(\bs{y}_2)-\bs{G}(\bs{x})\right\rVert_{\bs{\Sigma}_{\bs{\varepsilon}}^{-1}}^2,
\end{align}
and thus it follows that $g(\cdot,\cdot)=e^{-\Lambda(\cdot,\cdot)}$.
The setting where both the inner integrand $g$ and the composite integrand $h=f\circ I[g]$ are analytic does not typically apply to the EIG setting, where $f$ is the logarithm and $g$ is the likelihood of data observations. The reason for this is that additive Gaussian noise in the data model induces a singularity in the inverse CDF of the parameters of interest once they are mapped to the unit hypercube. This singularity causes the likelihood to go to zero as the integration variable approaches the edges of the unit hypercube, and as a result, the logarithm approaches negative infinity. This setting was analyzed in \cite{Bar24}, where it was shown that a convergence rate of almost one is possible in the RMSE of the double-loop RQMC estimator and no faster error convergence was observed. This was shown using Owen's boundary growth condition, as the Hardy--Krause variation of the integrand $h$ is infinite in this case. In~\cite[Remark 2.]{Pan25}, it is demonstrated that a similar rate is also possible for the mRQMC estimator based on the analysis in~\cite{liu2025randomized}. This is consistent with the behavior observed in Figure~\ref{fig:EIG.outer} (left column), where the RQMC and the mRQMC estimators converge at the same rate of one. We therefore modify the integrand so that stronger guarantees become available to the median RQMC estimator. Moreover, we assume that the model $\bs{G}$ satisfies the following regularity assumption in $\bs{x}\in [0,1]^{d_2}$.
\begin{Assumption}[Analytic model $\bs{G}$]\label{asu:model}
There exist constants $A_{\bs{G}}<\infty$ and $\eta_{\bs{G}}>0$ such that
\begin{equation}
   \left\lVert\frac{\partial^{\lVert\bs{\alpha}\rVert_1}}{\partial x_1^{\alpha_1}\cdots\partial x_{d_{2}}^{\alpha_{d_{2}}}}\bs{G}(\bs{z})\right\rVert\leq\frac{A_{\bs{G}}\lVert\bs{\alpha}\rVert_1!}{\eta_{\bs{G}}^{\lVert\bs{\alpha}\rVert_1}}
\end{equation}
for all $\bs{z}\in[0,1]^{d_2}$ and $\bs{\alpha}\in\mathbb{N}_0^{d_2}$.
\end{Assumption}
It follows from Assumption~\ref{asu:model} that the inner integrand $g$ in~\eqref{def:g} is analytic uniformly in $\bs{y}\in[0,1]^{d_1}$; that is, Assumption~\ref{asu:uniform.analytic} holds for the EIG application. We formalize this result in the following lemma.
\begin{Lemma}[Uniform analyticity of $g$]\label{Lem:asu.2}
    Let Assumption~\ref{asu:model} hold for $\bs{G}$ as in~\eqref{eq:data.model}. Then, Assumption~\ref{asu:uniform.analytic} holds for $g$ as in~\eqref{def:g}. That is, there exist constants $A_g<\infty$ and $\eta_g>0$ depending on $A_G$, $\eta_G$, and $\bs{\Sigma}_{\bs{\varepsilon}}$ such that
 \begin{equation}
   \left|\frac{\partial^{\lVert\bs{\alpha}\rVert_1}}{\partial x_1^{\alpha_1}\cdots\partial x_{d_2}^{\alpha_{d_2}}}g(\bs{y},\bs{x})\right|\leq \frac{A_g\,\lVert\bs{\alpha}\rVert_1!}{\eta_g^{\lVert\bs{\alpha}\rVert_1}}
 \end{equation}
 for all $\bs{x}\in[0,1]^{d_2}$, $\bs{y}\in[0,1]^{d_1}$, and $\bs{\alpha}\in\mathbb{N}_0^{d_2}$.
\end{Lemma}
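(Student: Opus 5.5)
The plan is to avoid differentiating $g$ directly. Direct differentiation runs into the fact that $\bs{\Phi}^{-1}(\bs{y}_2)$ is unbounded on $[0,1]^{d_1}$, so the naive chain-rule bounds grow with $\lVert\bs{\Phi}^{-1}(\bs{y}_2)\rVert$. Instead, I would extend $g(\bs{y},\cdot)$ holomorphically to a complex neighbourhood of $[0,1]^{d_2}$ of fixed size. There I would bound $|g|$ uniformly in $\bs{y}$ and then apply Cauchy's estimates. Write $\bs{a}(\bs{x})\coloneqq\bs{\Sigma}_{\bs{\varepsilon}}^{-\frac{1}{2}}\bs{G}(\bs{x})$ and $\bs{w}\coloneqq\bs{\Sigma}_{\bs{\varepsilon}}^{-\frac{1}{2}}\bs{G}(\bs{y}_1)+\bs{\Phi}^{-1}(\bs{y}_2)\in\mathbb{R}^{d_1-d_2}$. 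By~\eqref{def:Lambda},
\begin{equation*}
g(\bs{y},\bs{x})=\exp\left(-\tfrac{1}{2}(\bs{w}-\bs{a}(\bs{x}))^{\top}(\bs{w}-\bs{a}(\bs{x}))\right).
\end{equation*}
We use the bilinear (non-conjugated) square, so the expression stays holomorphic in $\bs{x}$. The whole dependence on $\bs{y}$ enters through the real vector $\bs{w}$, and the goal is a bound that does not depend on $\bs{w}$.

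\emph{Step 1 (holomorphic extension of $\bs{G}$).} By Assumption~\ref{asu:model} and the multinomial identity $\sum_{\lVert\bs{\alpha}\rVert_1=k}k!/\bs{\alpha}!\,|\bs{h}^{\bs{\alpha}}|=\lVert\bs{h}\rVert_1^k$, the Taylor series of $\bs{G}$ about any $\bs{x}_0\in[0,1]^{d_2}$ satisfies
\begin{equation*}
\sum_{\bs{\alpha}}\frac{\lVert\partial^{\bs{\alpha}}\bs{G}(\bs{x}_0)\rVert}{\bs{\alpha}!}|\bs{h}^{\bs{\alpha}}|\leq A_{\bs{G}}\sum_{k}\left(\frac{\lVert\bs{h}\rVert_1}{\eta_{\bs{G}}}\right)^k.
\end{equation*}
This converges for complex $\bs{h}$ with $\lVert\bs{h}\rVert_1<\eta_{\bs{G}}$. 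Hence $\bs{G}$, and therefore $\bs{a}$, extends holomorphically to $U_\rho\coloneqq\{\bs{x}_0+\bs{h}:\bs{x}_0\in[0,1]^{d_2},\ \lVert\bs{h}\rVert_1\leq\rho\}$ for any $\rho<\eta_{\bs{G}}$. Moreover,
\begin{equation*}
\lVert\bs{a}(\bs{x}_0+\bs{h})-\bs{a}(\bs{x}_0)\rVert\leq \lVert\bs{\Sigma}_{\bs{\varepsilon}}^{-\frac{1}{2}}\rVert A_{\bs{G}}\frac{\rho/\eta_{\bs{G}}}{1-\rho/\eta_{\bs{G}}}\eqqcolon\epsilon(\rho).
\end{equation*}
This is the only place where the assumption that the smallest eigenvalue of $\bs{\Sigma}_{\bs{\varepsilon}}$ is bounded away from zero enters. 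I will fix $\rho=\eta_{\bs{G}}/2$, say, so that $\epsilon\coloneqq\epsilon(\rho)$ is a finite constant depending only on $A_{\bs{G}}$, $\eta_{\bs{G}}$, and $\bs{\Sigma}_{\bs{\varepsilon}}$.

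\emph{Step 2 (uniform bound on $|g|$ on $U_\rho$; the key step).} For $\bs{z}=\bs{x}_0+\bs{h}\in U_\rho$, write $\bs{a}(\bs{z})=\bs{a}(\bs{x}_0)+\bs{\delta}_R+i\bs{\delta}_I$ with $\bs{\delta}_R,\bs{\delta}_I$ real and $\lVert\bs{\delta}_I\rVert\leq\epsilon$. Since $\bs{w}$ and $\bs{a}(\bs{x}_0)$ are real, put $\bs{v}\coloneqq\bs{w}-\bs{a}(\bs{x}_0)-\bs{\delta}_R\in\mathbb{R}^{d_1-d_2}$. Then
\begin{equation*}
\mathrm{Re}\left[(\bs{w}-\bs{a}(\bs{z}))^{\top}(\bs{w}-\bs{a}(\bs{z}))\right]=\lVert\bs{v}\rVert^2-\lVert\bs{\delta}_I\rVert^2,
\end{equation*}
and hence
\begin{equation*}
|g(\bs{y},\bs{z})|=e^{-\frac12\lVert\bs{v}\rVert^2+\frac12\lVert\bs{\delta}_I\rVert^2}\leq e^{\epsilon^2/2}.
\end{equation*}
This holds for every $\bs{w}$, hence for all $\bs{y}\in[0,1]^{d_1}$, including the singular regime $\bs{y}_2\to\partial[0,1]^{d_1-d_2}$. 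This is the step I expect to be the crux. A real-variable approach would have to balance polynomial growth in $\lVert\bs{w}\rVert$ from the derivatives against the Gaussian decay $e^{-\frac12\lVert\bs{w}\rVert^2}$, which is messy. The complex formulation makes the uniformity transparent, because the only growth in the modulus comes from the bounded imaginary part of $\bs{a}$.

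\emph{Step 3 (Cauchy estimates).} For $\bs{x}\in[0,1]^{d_2}$, the closed polydisc of polyradius $(\rho/d_2,\ldots,\rho/d_2)$ around $\bs{x}$ lies in $U_\rho$. Cauchy's inequality in several variables then gives
\begin{equation*}
|\partial^{\bs{\alpha}}_{\bs{x}}g(\bs{y},\bs{x})|\leq e^{\epsilon^2/2}\,\bs{\alpha}!\,(d_2/\rho)^{\lVert\bs{\alpha}\rVert_1}.
\end{equation*}
Since $\bs{\alpha}!\leq\lVert\bs{\alpha}\rVert_1!$, this is exactly Assumption~\ref{asu:uniform.analytic} with $A_g=e^{\epsilon^2/2}$ and $\eta_g=\rho/d_2=\eta_{\bs{G}}/(2d_2)$. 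Both constants depend only on $A_{\bs{G}}$, $\eta_{\bs{G}}$, and $\bs{\Sigma}_{\bs{\varepsilon}}$ (and on the dimension $d_2$, which is fixed) and not on $\bs{y}$, which proves the lemma. One technicality remains: the domain $U_\rho$ extends slightly beyond $[0,1]^{d_2}$, where $\bs{G}$ is defined only through its power series. This is harmless, because the local expansions agree on overlaps by the identity theorem.
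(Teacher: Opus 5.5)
Your proof is correct, and it takes a genuinely different route from the paper.

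\textbf{The paper's route.} The paper works entirely in real variables. It expands $\partial^{\bs{\alpha}}e^{-\Lambda_{\bs{y}}}$ by Fa\`{a} di Bruno's formula. It bounds each block derivative $\partial^{\bs{\beta}}\Lambda_{\bs{y}}$ through the Leibniz rule on $\frac{1}{2}\langle\bs{v}_{\bs{y}},\bs{v}_{\bs{y}}\rangle$ by $(1+\lVert\bs{v}_{\bs{y}}\rVert)\tilde{K}\,|b|!/\tilde{\eta}_{\bs{G}}^{|b|}$. It absorbs the resulting polynomial growth with $\sup_{\tau\geq 0}(1+\tau)^k e^{-\tau^2/2}\leq 2^k\sqrt{k!}$, and it sums over set partitions using Lah numbers. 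The result is $A_g=\max\{1,2\tilde{K}\}$ and $\eta_g=\eta_{\bs{G}}/(2(1+2\tilde{K}))$, where $\tilde{K}=K+K^2$ and $K=\lVert\bs{\Sigma}_{\bs{\varepsilon}}^{-1/2}\rVert A_{\bs{G}}$.

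\textbf{What your route buys.} That balancing of polynomial growth against Gaussian decay is exactly what you sidestep. Your identity $|g|=e^{-\frac{1}{2}\lVert\mathrm{Re}(\bs{w}-\bs{a})\rVert^2+\frac{1}{2}\lVert\mathrm{Im}\,\bs{a}\rVert^2}$ makes uniformity in the real vector $\bs{w}$, hence in $\bs{y}$, immediate, and Cauchy's estimates replace all the combinatorics. With small adjustments your route also gives better constants. Take $\rho=\eta_{\bs{G}}/(1+K)$, so that $\epsilon=1$. Use the anisotropic polyradii $r_j=\rho\alpha_j/\lVert\bs{\alpha}\rVert_1$, only in the variables with $\alpha_j>0$, together with $\bs{\alpha}!\leq\prod_j\alpha_j^{\alpha_j}$ and $n^n\leq e^n n!$. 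This gives $A_g=e^{1/2}$ and $\eta_g=\eta_{\bs{G}}/(e(1+K))$. It removes the $d_2$-dependence from your $\eta_g$, so the constants depend only on $A_{\bs{G}}$, $\eta_{\bs{G}}$, $\bs{\Sigma}_{\bs{\varepsilon}}$ as the statement claims. For large $K$ it also improves on the paper's $\eta_g\sim\eta_{\bs{G}}/(4K^2)$.

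\textbf{What the paper's route buys.} The same real-variable Leibniz/Fa\`{a} di Bruno bookkeeping is reused in Appendix~\ref{app:der.int.is} for the $\bs{y}$-derivatives of $h_{\rm{IS}}$. There the singularity of $\bs{\Phi}^{-1}$ sits in the differentiation variable itself, and no complex neighbourhood of fixed size is available.

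\textbf{One point left implicit.} You do not verify that the local Taylor series actually reproduce $\bs{G}$ at real points of the cube. This is needed both for gluing via the identity theorem and for identifying the real derivatives of $g$ on the closed cube with the complex ones. It follows from the same derivative bound: the integral form of the order-$k$ remainder along the segment $\bs{x}_0+t\bs{h}\in[0,1]^{d_2}$ is at most $A_{\bs{G}}(\lVert\bs{h}\rVert_1/\eta_{\bs{G}})^k\to 0$.
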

For the proof, see Appendix~\ref{app:uniform.analyticity}. We note that although customary in the EIG literature, an observation noise model that is bounded could be applied, for which case the EIG may have sufficient regularity to satisfy the assumptions stated in the previous section; however, we will not discuss such cases in the present work and focus on the additive Gaussian noise case. 

\subsection{Importance sampling}
 An importance sampling scheme was introduced in~\cite{Liu23} to regularize an integrand and avoid boundary singularities. We propose a similar importance sampling method here to transform the integrand $h$ so that the boundary singularity from the logarithm is removed. Although this ensures bounded Hardy--Krause variation, the composite integrand is still not analytic. We start again with the nested integral in~\eqref{eq:EIG} and apply a transformation of variables $\bs{\varepsilon}=\bs{\Sigma}_{\bs{\varepsilon}}^{\frac{1}{2}}\bs{z}$ first for convenience, where $\bs{z}$ is standard Gaussian:
\begin{align}
    {}&\int_{\bs{\Theta}}\int_{\mathcal{Y}}\log\left(\int_{\bs{\Theta}}e^{-\frac{1}{2}\left\lVert \bs{G}(\bs{\theta})+\bs{\varepsilon}-\bs{G}(\bs{\theta}')\right\rVert_{\bs{\Sigma}_{\bs{\varepsilon}}^{-1}}^2}\di{}\bs{\theta}'\right)\frac{e^{-\frac{1}{2}\left\lVert \bs{\varepsilon}\right\rVert_{\bs{\Sigma}_{\bs{\varepsilon}}^{-1}}^2}}{\det(2\pi\bs{\Sigma}_{\bs{\varepsilon}})^{\frac{1}{2}}}\di{}\bs{\varepsilon}\di{}\bs{\theta}\nonumber\\
    {}&=\int_{\bs{\Theta}}\int_{\mathcal{Y}}\log\left(\int_{\bs{\Theta}}e^{-\frac{1}{2}\left\lVert \bs{G}(\bs{\theta})+\bs{\Sigma}_{\bs{\varepsilon}}^{\frac{1}{2}}\bs{z}-\bs{G}(\bs{\theta}')\right\rVert_{\bs{\Sigma}_{\bs{\varepsilon}}^{-1}}^2}\di{}\bs{\theta}'\right)\frac{e^{-\frac{1}{2}\left\lVert \bs{z}\right\rVert^2}}{(2\pi)^{\frac{d_1-d_2}{2}}}\di{}\bs{z}\di{}\bs{\theta}\nonumber\\
    {}&=\int_{\bs{\Theta}}\int_{\mathcal{Y}}\log\left(\int_{\bs{\Theta}}e^{-\frac{1}{2}\left\lVert \bs{\Sigma}_{\bs{\varepsilon}}^{-\frac{1}{2}}\left(\bs{G}(\bs{\theta})-\bs{G}(\bs{\theta}')\right)+\bs{z}\right\rVert^2}\di{}\bs{\theta}'\right)\frac{e^{-\frac{1}{2}\left\lVert \bs{z}\right\rVert^2}}{(2\pi)^{\frac{d_1-d_2}{2}}}\di{}\bs{z}\di{}\bs{\theta}.
\end{align}
Next, we apply the importance sampling scheme, resulting in the following expression:
\begin{align}
    {}&\int_{\bs{\Theta}}\int_{\mathcal{Y}}\log\left(\int_{\bs{\Theta}}e^{-\frac{1}{2}\left\lVert \bs{\Sigma}_{\bs{\varepsilon}}^{-\frac{1}{2}}\left(\bs{G}(\bs{\theta})-\bs{G}(\bs{\theta}')\right)+\bs{z}\right\rVert^2}\di{}\bs{\theta}'\right)\frac{e^{-\frac{1}{2}\left\lVert \bs{z}\right\rVert^2}}{(2\pi)^{\frac{d_1-d_2}{2}}}\di{}\bs{z}\di{}\bs{\theta}\nonumber\\
    {}&=\int_{\bs{\Theta}}\int_{\mathcal{Y}}\log\left(\int_{\bs{\Theta}}e^{-\frac{1}{2}\left\lVert \bs{\Sigma}_{\bs{\varepsilon}}^{-\frac{1}{2}}\left(\bs{G}(\bs{\theta})-\bs{G}(\bs{\theta}')\right)+\bs{D}_{\bs{\gamma}}\bs{z}\right\rVert^2}\di{}\bs{\theta}'\right)\frac{e^{-\frac{1}{2}\left\lVert \bs{D}_{\bs{\gamma}}\bs{z}\right\rVert^2}}{(2\pi)^{\frac{d_1-d_2}{2}}}\di{}\bs{z}\di{}\bs{\theta}\prod_{i=1}^{d_1-d_2}\gamma_i,
\end{align}
where $\bs{D}_{\bs{\gamma}}\coloneqq\mathrm{diag}(\gamma_1,\ldots,\gamma_{d_1-d_2})\in\mathbb{R}^{(d_1-d_2)\times(d_1-d_2)}$. Applying the inverse CDF now results in the following integral
\begin{align}\label{eq:EIG.bh}
    {}&\int_{\bs{\Theta}}\int_{\mathcal{Y}}\log\left(\int_{\bs{\Theta}}e^{-\frac{1}{2}\left\lVert \bs{\Sigma}_{\bs{\varepsilon}}^{-\frac{1}{2}}\left(\bs{G}(\bs{\theta})-\bs{G}(\bs{\theta}')\right)+\bs{D}_{\bs{\gamma}}\bs{z}\right\rVert^2}\di{}\bs{\theta}'\right)\frac{e^{-\frac{1}{2}\left\lVert \bs{D}_{\bs{\gamma}}\bs{z}\right\rVert^2}}{(2\pi)^{\frac{d_1-d_2}{2}}}\di{}\bs{z}\di{}\bs{\theta}\prod_{i=1}^{d_1-d_2}\gamma_i\nonumber\\
    {}&=\int_{[0,1]^{d_1}}\log\left(\int_{[0,1]^{d_2}}e^{-\frac{1}{2}\left\lVert \bs{\Sigma}_{\bs{\varepsilon}}^{-\frac{1}{2}}\left(\bs{G}(\bs{y}_1)-\bs{G}(\bs{x})\right)+\bs{D}_{\bs{\gamma}}\bs{\Phi}^{-1}(\bs{y}_2)\right\rVert^2}\di{}\bs{x}\right)w_{\bs{\gamma}}(\bs{y}_2)\di{}\bs{y}\nonumber\\
    {}&=\int_{[0,1]^{d_1}}\log\left(\int_{[0,1]^{d_2}}e^{-\Lambda_{\bs{\gamma}}(\bs{y},\bs{x})}\di{}\bs{x}\right)w_{\bs{\gamma}}(\bs{y}_2)\di{}\bs{y},
\end{align}
where 
\begin{align}\label{def:w}
w_{\bs{\gamma}}(\bs{y}_2){}&\coloneqq \prod_{i=1}^{d_1-d_2}\gamma_ie^{-\frac{1}{2}\left\lVert \bs{D}_{\bs{\gamma}}\bs{\Phi}^{-1}(\bs{y}_2)\right\rVert^2+\frac{1}{2}\left\lVert \bs{\Phi}^{-1}(\bs{y}_2)\right\rVert^2}\nonumber\\
{}&=\prod_{i=1}^{d_1-d_2}\gamma_ie^{-\frac{1}{2}(\bs{\Phi}^{-1}(y_{2,i}))^2(\gamma_i^2-1)},
\end{align}
and
\begin{align}\label{def:Lambda.gamma}
    \Lambda_{\bs{\gamma}}(\bs{y},\bs{x}){}&\coloneqq\frac{1}{2}\left\lVert \bs{\Sigma}_{\bs{\varepsilon}}^{-\frac{1}{2}}\left(\bs{G}(\bs{y}_1)-\bs{G}(\bs{x})\right)+\bs{D}_{\gamma}\bs{\Phi}^{-1}(\bs{y}_2)\right\rVert^2.
\end{align}
   Let the composite integrand with importance sampling be defined as follows:
\begin{align}\label{def:importance.sampling.integrand}
    h_{\rm{IS}}(\bs{y}){}&\coloneqq \log\left(\int_{[0,1]^{d_2}}e^{-\Lambda_{\bs{\gamma}}(\bs{y},\bs{x})}\di{}\bs{x}\right)w_{\bs{\gamma}}(\bs{y}_2),
\end{align}
for $\bs{y}\in[0,1]^{d_1}$, where $\Lambda_{\bs{\gamma}}$ is as in~\eqref{def:Lambda.gamma} and $w_{\bs{\gamma}}$ is as in~\eqref{def:w}. The outer integrand in~\eqref{eq:EIG.bh} does not have singularities at the boundaries as long as $\gamma_i>1$ for $1\leq i\leq d_1-d_2$. However, the integrand becomes increasingly peaked inside the domain as $\gamma_i\to\infty$ for any $1\leq i\leq d_1-d_2$. This effect limits the size $\gamma_i$ that can be used in practice. Moreover, mixed derivatives of the integrand in~\eqref{eq:EIG.bh} might still display boundary singularities, implying that this integrand is not analytic. We state the following Theorem~\ref{thm:moment.EIG} and related results again in the general notation used in the previous section and later verify that this setting applies to the EIG case with importance sampling.

\subsection{\texorpdfstring{$g$}{inner integrand} analytic, \texorpdfstring{$f\circ I[g]$}{combined integrand} smooth}\label{sec:error.bounds.EIG}
\begin{Assumption}[Weighted compatibility of $f$ and $g$]\label{asu:weighted.compatibility}
   Let $g:[0,1]^{d_1}\times[0,1]^{d_2}\to\mathbb{R}$ and
      \begin{equation}
       g^{\ast}(\bs{y})\coloneqq \inf_{\bs{x}\in[0,1]^{d_2}}g(\bs{y},\bs{x})
   \end{equation}
   for $\bs{y}\in[0,1]^{d_1}$. Moreover, let
   \begin{equation}\label{def:Omega}
       \Omega_{>0}\coloneqq\left\{\bs{y}\in[0,1]^{d_1}:g^{\ast}(\bs{y})>0\right\}.
   \end{equation}
   Assume that $I[g(\bs{y},\cdot)]\leq 1$ for every $\bs{y}\in\Omega_{>0}\subseteq [0,1]^{d_1}$. 
   Moreover, let $f:(0,\infty)\to\mathbb{R}$ be such that there exists $L_{f}<\infty$ so that
   \begin{equation}
       \left|f(z)-f(z')\right|\leq L_{f}\left|\log(z)-\log(z')\right|
   \end{equation}
   for all $z,z'>0$. Finally, let $w:\Omega_{>0}\to(0,\infty)$ be such that there exists $0<\rho<1$ and $A_{\rho}<\infty$ so that
   \begin{equation}
       w(\bs{y})\leq A_{\rho}\left(g^{\ast}(\bs{y})\right)^{\rho}
   \end{equation}
   for all $\bs{y}\in\Omega_{>0}$.
\end{Assumption}
Assumption~\ref{asu:weighted.compatibility} is used in the proof of Theorem~\ref{thm:moment.EIG} below in place of Assumption~\ref{asu:Lipschitz.f}. We provide the following Lemma.

\begin{Lemma}[Weighted inner-error bound]\label{lem:weighted}
    Let Assumption~\ref{asu:weighted.compatibility} hold, let $\bs{y}\in\Omega_{>0}$ be fixed, and let $K\in\mathbb{R}$ and $0<b<1/2$ be such that
    \begin{equation}
        K\geq g^{\ast}(\bs{y})
    \end{equation}
    and
    \begin{equation}
        \left|K-I[g(\bs{y},\cdot)]\right|\leq b.
    \end{equation}
Then, there exists $C_{\rho}\coloneqq L_{f}A_{\rho}\max\{2^{\rho},3^{\rho}/(e\rho)\}<\infty$ such that
    \begin{equation}
        w(\bs{y})\left|f(K)-f(I[g(\bs{y},\cdot)])\right|\leq C_{\rho}b^{\rho}.
    \end{equation}
\end{Lemma}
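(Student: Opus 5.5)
Write $g^{\ast}\coloneqq g^{\ast}(\bs{y})>0$ and $I\coloneqq I[g(\bs{y},\cdot)]$. The plan is to reduce everything to an estimate on $|\log K-\log I|$ and then trade the weight $w(\bs{y})\le A_{\rho}(g^{\ast})^{\rho}$ against the possible blow-up of that logarithmic difference when $g^{\ast}$ is small. First note $I\ge g^{\ast}>0$, since $g(\bs{y},\cdot)\ge g^{\ast}$ pointwise, and $K\ge g^{\ast}>0$ by hypothesis. Hence both $f(K)$ and $f(I)$ are defined, and Assumption~\ref{asu:weighted.compatibility} gives
\begin{equation*}
w(\bs{y})\left|f(K)-f(I)\right|\le L_f A_{\rho}\,(g^{\ast})^{\rho}\left|\log K-\log I\right|.
\end{equation*}
By the mean value theorem, $|\log K-\log I|\le |K-I|/\min\{K,I\}\le b/g^{\ast}$, since $\min\{K,I\}\ge g^{\ast}$. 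It therefore remains to bound $(g^{\ast})^{\rho}\min\{|\log K-\log I|,\,b/g^{\ast}\}$ by a constant times $b^{\rho}$, and I will split into two cases according to the size of $g^{\ast}$ relative to $b$.

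\emph{Case $g^{\ast}\ge b$.} Here $(g^{\ast})^{\rho}\,b/g^{\ast}=b\,(g^{\ast})^{\rho-1}\le b\cdot b^{\rho-1}=b^{\rho}$, because $\rho-1<0$. This is within the $2^{\rho}$ branch of $C_{\rho}$; there is room to spare, and a cruder bound such as $\min\{K,I\}\ge g^{\ast}/2$ would give exactly the factor $2^{\rho}$.

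\emph{Case $g^{\ast}<b$.} The bound $b/g^{\ast}$ is now too weak, so I bound the logarithms directly. Since $I\le 1$ and $b<1/2$, we have $g^{\ast}\le K,I\le I+b<3/2$. Hence each of $|\log K|$ and $|\log I|$ is at most $\log(1/g^{\ast})$, up to the upper side, which is bounded by $\log(3/2)$. More precisely,
\begin{equation*}
|\log K-\log I|\le \log\bigl(\tfrac{3}{2}\bigr)-\log g^{\ast}=\log\bigl(\tfrac{3}{2g^{\ast}}\bigr),
\end{equation*}
because both numbers lie in $[\log g^{\ast},\log(3/2)]$. Then
\begin{equation*}
(g^{\ast})^{\rho}\log\bigl(\tfrac{3}{2g^{\ast}}\bigr)=\bigl(\tfrac{3}{2}\bigr)^{\rho}\,t^{-\rho}\log t,\qquad t\coloneqq \tfrac{3}{2g^{\ast}}.
\end{equation*}
One option is to use $\sup_{t>0}t^{-\rho}\log t=1/(e\rho)$. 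This yields a constant $(3/2)^{\rho}/(e\rho)$ but no factor $b^{\rho}$. To recover $b^{\rho}$, I will instead write
\begin{equation*}
(g^{\ast})^{\rho}\log\bigl(\tfrac{3}{2g^{\ast}}\bigr)=b^{\rho}\,\bigl(\tfrac{g^{\ast}}{b}\bigr)^{\rho}\log\bigl(\tfrac{3}{2g^{\ast}}\bigr).
\end{equation*}
I then bound $(g^{\ast}/b)^{\rho}\log(3/(2g^{\ast}))$ using $g^{\ast}<b$, via the substitution $s=3b/(2g^{\ast})\cdot(\cdot)$. The main obstacle is that the logarithm $\log(3/(2g^{\ast}))$ can be large even when $g^{\ast}/b$ is near one.

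A cleaner route is to bound $|\log K-\log I|$ more carefully when $g^{\ast}<b$. By the mean value theorem, $|\log K-\log I|\le\log\bigl((\min\{K,I\}+b)/\min\{K,I\}\bigr)\le\log(1+b/g^{\ast})\le\log(3b/g^{\ast})$, using $1\le b/g^{\ast}$. Then
\begin{equation*}
(g^{\ast})^{\rho}\log\bigl(\tfrac{3b}{g^{\ast}}\bigr)=(3b)^{\rho}\,u^{-\rho}\log u\le \frac{3^{\rho}}{e\rho}\,b^{\rho},\qquad u\coloneqq\tfrac{3b}{g^{\ast}},
\end{equation*}
by maximising $u\mapsto u^{-\rho}\log u$, whose maximum is at $u=e^{1/\rho}$. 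This produces exactly the $3^{\rho}/(e\rho)$ branch of $C_{\rho}$.

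Combining the two cases and multiplying by $L_f A_{\rho}$ gives $w(\bs{y})|f(K)-f(I)|\le L_fA_{\rho}\max\{2^{\rho},3^{\rho}/(e\rho)\}\,b^{\rho}=C_{\rho}b^{\rho}$. I expect the only delicate step to be getting the correct logarithmic bound in the small-$g^{\ast}$ case. Specifically, one must use $\log(1+b/g^{\ast})$ from the mean value theorem rather than absolute bounds on each logarithm, so that the factor $b^{\rho}$ survives.
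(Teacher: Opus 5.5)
Your proof is correct and follows essentially the same two-case argument as the paper. In one regime you combine a linearized bound on $|\log K-\log I[g(\bs{y},\cdot)]|$ with $w(\bs{y})\leq A_{\rho}(g^{\ast}(\bs{y}))^{\rho}$; in the other you use $\sup_{u}u^{\rho}\log(3b/u)=(3b)^{\rho}/(e\rho)$ to get the $3^{\rho}/(e\rho)$ branch. The differences are cosmetic: you split on $g^{\ast}(\bs{y})$ versus $b$ rather than on $I[g(\bs{y},\cdot)]$ versus $2b$, and you use $\min\{K,I[g(\bs{y},\cdot)]\}\geq g^{\ast}(\bs{y})$ in place of $|\log t|\leq 2|t-1|$, which even gives the constant $1$ instead of $2^{\rho}$ in the first case; you should delete the abandoned exploratory paragraph in your second case.
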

For the proof, see Appendix~\ref{app:weighted}. We introduce the following notation. Let $f$, $g$, and $w$ be as in Assumption~\ref{asu:weighted.compatibility}, and let
\begin{equation}\label{def:weighted.integrand}
    h_{w}(\bs{y})\coloneqq f\left(I[g(\bs{y},\cdot)]\right)w(\bs{y})
\end{equation}
and
\begin{equation}
    \hat{h}_{w}(\bs{y})\coloneqq f\left(\hat{\mu}_M^{(\tilde{r})}[g(\bs{y},\cdot)]\right)w(\bs{y})
\end{equation}
for every $\bs{y}\in\Omega_{>0}$ with $\Omega_{>0}$ as in~\eqref{def:Omega} and let $h_w=\hat{h}_w=0$ on the complement of $\Omega_{>0}$. Then, the total weighted error is defined as follows:
\begin{align}\label{eq:total.error.is}
    \mathcal{E}_{{\rm{tot}},w}{}&\coloneqq\hat{\mu}_{N}^{(\tilde{s})}\left[\hat{h}_w\right] - I\left[h_w\right]\nonumber\\
    {}&= \underbrace{\hat{\mu}_{N}^{(\tilde{s})}\left[\hat{h}_w\right] - \hat{\mu}_{N}^{(\tilde{s})}\left[h_w\right]}_{\eqqcolon\mathcal{E}_{{\rm{in}},w}} + \underbrace{\hat{\mu}_{N}^{(\tilde{s})}\left[h_w\right] - I\left[h_w\right]}_{\eqqcolon\mathcal{E}_{{\rm{out}},w}}.
\end{align}

\begin{Assumption}[Regularity of the weighted outer integrand]\label{asu:outer}
Let $h_{w}$ be as in~\eqref{def:weighted.integrand}. For a multi-index $\bs{\beta}\in\mathbb{N}_0^{d_1}$, let $h_{w}^{(\bs{\beta})}$ denote the corresponding mixed partial derivative of $h_{w}$. Moreover, let $\alpha\in\mathbb{N}\cup\{0\}$. For every $\bs{\beta}\in\{0,1,\ldots,\alpha+1\}^{d_1}$, the mixed partial derivative $h_{w}^{(\bs{\beta})}$ exists on $\Omega_{>0}$ for $\Omega_{>0}$ as in~\eqref{def:Omega} and extends continuously to the complement of $\Omega_{>0}$.
\end{Assumption}
Assumption~\ref{asu:analytic.h} does not apply to the EIG integrand with importance sampling and Assumption~\ref{asu:outer} is used in the proof of Theorem~\ref{thm:moment.EIG} below instead. 

\begin{Lemma}\label{lem:total.error.eig} Let $g(\bs{y},\cdot):[0,1]^{d_2}\to\mathbb{R}$ satisfy Assumption~\ref{asu:uniform.analytic}. Let $c'$ be such that $c'<3\log(2)/\pi^2$ and let $M_1<\infty$, depending on $c'$, $d_2$, $A_g$, and $\eta_g$, be such that $M^{-c'\log(M)/d_2}<1/2$ for all $M\geq M_1$ and define the event $\mathcal{A}^{(s,n)}$ for $1\leq s\leq S$, $1\leq n\leq N$ such that
\begin{equation}\label{def:A.s.n}
    \mathcal{A}^{(s,n)}\coloneqq\left\{\left|\hat{\mu}_M^{(\tilde{r})}[g(\bs{y}^{(s,n)},\cdot)]-I[g(\bs{y}^{(s,n)},\cdot)]\right|\geq M^{-\frac{c'}{d_2}\log(M)}\right\}
\end{equation}
for all $M\geq M_1$. We have the following probability bound
\begin{equation}
\mathbb{P}\left(\mathcal{A}^{(s,n)}\Big|\bs{\tau}\right)\leq \left(\frac{4B_{d_2}}{\sqrt{\tilde{m}}}\right)^{\tilde{r}},
\end{equation}
where $\bs{\tau}$ is the outer scrambling. 
\end{Lemma}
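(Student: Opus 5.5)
The plan is to repeat the conditioning argument from the proof of Lemma~\ref{lem:total.error}, leading to~\eqref{eq:delta.inner.y}, now conditioning on the outer scrambling $\bs{\tau}$ instead of on a single point $\bs{y}$. Fix $1\leq s\leq S$ and $1\leq n\leq N$. Once $\bs{\tau}$ is fixed, the outer point $\bs{y}^{(s,n)}=\bs{\tau}^{(s)}(\bs{u}^{(n)})$ is a deterministic element of $[0,1]^{d_1}$. The inner scramblings $\bs{\rho}^{(s,n,r)}$, $1\leq r\leq R$, are iid and independent of $\bs{\tau}$. Hence, conditionally on $\bs{\tau}$, the random variables
\begin{equation*}
Z^{(r)}\coloneqq\frac{1}{M}\sum_{m=1}^{M}g(\bs{y}^{(s,n)},\bs{x}^{(s,n,r,m)}),\qquad 1\leq r\leq R,
\end{equation*}
are independent and have the same law as the single-randomization RQMC estimator of $I[g(\bs{y},\cdot)]$ evaluated at $\bs{y}=\bs{y}^{(s,n)}$.

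The first step is to apply~\eqref{eq:delta.inner.s.n}, that is, \cite[Theorem 2]{Pan24} combined with Assumption~\ref{asu:uniform.analytic}, with the constant $c'$ in place of $c$. This yields a constant $B_{d_2}$, depending on $c'$ and $d_2$, and a threshold $M_0$, depending on $c'$, $d_2$, $A_g$, and $\eta_g$, such that
\begin{equation*}
\mathbb{P}\left(\left|Z^{(r)}-I[g(\bs{y}^{(s,n)},\cdot)]\right|\geq M^{-\frac{c'}{d_2}\log(M)}\,\Big|\,\bs{\tau}\right)\leq\frac{B_{d_2}}{\sqrt{\tilde{m}}}
\end{equation*}
for every $r$ and every $M\geq M_0$. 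This bound is uniform in $\bs{y}$, because $A_g$ and $\eta_g$ do not depend on $\bs{y}$. This uniformity is the crucial point: the bound holds for whatever point $\bs{y}^{(s,n)}$ the scrambling $\bs{\tau}$ produces, and no measurability subtlety arises beyond the independence of $\bs{\rho}$ and $\bs{\tau}$.

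The second step is to apply the median concentration bound~\eqref{eq:med.lem.prob} of Lemma~\ref{lem:median.moment}, conditionally on $\bs{\tau}$, with $\mu=I[g(\bs{y}^{(s,n)},\cdot)]$, $b=M^{-c'\log(M)/d_2}$, and $\delta=B_{d_2}/\sqrt{\tilde{m}}$. This requires $\delta<1$, which holds for $\tilde{m}$ large; if $\delta\geq 1$, the claimed bound exceeds one and holds trivially. The almost-sure bound $\Delta$ is not needed for the probability statement. Since $\hat{\mu}_M^{(\tilde{r})}[g(\bs{y}^{(s,n)},\cdot)]=\mathrm{med}\{(Z^{(r)})_{r=1}^R\}$, we obtain
\begin{equation*}
\mathbb{P}\left(\mathcal{A}^{(s,n)}\,\Big|\,\bs{\tau}\right)\leq\left(\frac{4B_{d_2}}{\sqrt{\tilde{m}}}\right)^{\tilde{r}}.
\end{equation*}

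The main obstacle is bookkeeping rather than substance. The statement's threshold $M_1$ is defined only through the requirement $M^{-c'\log(M)/d_2}<1/2$, so the result should be read for $M\geq\max\{M_0,M_1\}$, with $M_1$ enlarged accordingly. The $1/2$ requirement is not used here; it is what later allows Lemma~\ref{lem:weighted} to be applied on the complement of $\mathcal{A}^{(s,n)}$.
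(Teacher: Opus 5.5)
Your proposal is correct and follows the same route as the paper, whose one-line proof invokes \cite[Theorem 2]{Pan24}. The implicit argument is the one behind~\eqref{eq:delta.inner.y}: the uniform-in-$\bs{y}$ single-randomization bound conditional on $\bs{\tau}$, followed by the median concentration step of Lemma~\ref{lem:median.moment}. Your explicit median step, the trivial case $\delta\geq 1$, and the requirement $M\geq\max\{M_0,M_1\}$ supply details the paper leaves unstated.
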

\begin{proof}
The result follows from~\cite[Theorem 2.]{Pan24}.
\end{proof}

\begin{Theorem}[Root mean squared error for the weighted outer integrand]\label{thm:moment.EIG}
Let $f$, $g$, and $w$ satisfy the Assumption~\ref{asu:weighted.compatibility} for $L_f<\infty$, $0<\rho<1$, and $A_{\rho}<\infty$. Let the Assumptions of Lemma~\ref{lem:total.error.eig} hold for $c'<3\log(2)/\pi^2$. Let $h_w$ as in~\eqref{def:weighted.integrand} satisfy Assumption~\ref{asu:outer} for $\alpha\in\mathbb{N}\cup\{0\}$. Moreover, let $g(\bs{y},\bs{x})\leq 1$ for all $\bs{x}\in[0,1]^{d_2}$ and all $\bs{y}\in\Omega_{>0}$ and let $S$ and $N$ be such that $S=\Theta(\log_2(N))$. Then, for every $\epsilon>0$ there exists $C_{\epsilon},N_0<\infty$ depending on $\epsilon$, $d_1$, the norm of $h_w$ in the space of~\cite[Section 3.]{Pan25} or the Sobolev space of~\cite[Definition 4., Remark 2.]{goda2026quasi}, and $\alpha$, and for every $c<c'$ there exists $M_0<\infty$ depending on $c$, $d_2$, $A_g$, $\eta_g$, $L_{f}$, $\rho$, and $A_{\rho}$ such that
\begin{align}\label{eq:cor.eig}
  \mathbb{E}\left[\left|\mathcal{E}_{{\mathrm{tot}},w}\right|^2\right]^{\frac{1}{2}}\leq{}& C_{\epsilon}N^{-\alpha-\frac{3}{2}+\epsilon}+M^{-\frac{\rho c}{d_2}\log(M)}+\frac{L_{f}A_{\rho}}{e\rho}\left(S\left(\frac{4B_{d_2}}{\sqrt{\tilde{m}}}\right)^{\tilde r}\right)^{\frac{1}{2}}
\end{align}
 for all $N\geq N_0$, and $M\geq M_0$. The second and third terms in~\eqref{eq:cor.eig} tend to zero super-polynomially in $M$ provided that $R=\Theta((\tilde{m})^2)$.
\end{Theorem}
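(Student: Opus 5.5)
We split the error as in~\eqref{eq:total.error.is} and apply Minkowski's inequality with $p=2$:
\[
\mathbb{E}[|\mathcal{E}_{{\rm tot},w}|^2]^{1/2}\leq\mathbb{E}[|\mathcal{E}_{{\rm in},w}|^2]^{1/2}+\mathbb{E}[|\mathcal{E}_{{\rm out},w}|^2]^{1/2}.
\]

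\textbf{Outer error.} The term $\mathcal{E}_{{\rm out},w}$ is a plain (non-nested) mRQMC error for the integrand $h_w$. Assumption~\ref{asu:outer} says that mixed derivatives up to order $\alpha+1$ in each coordinate exist and extend continuously to $[0,1]^{d_1}$. Hence $h_w$ lies in the smoothness space of~\cite[Section 3.]{Pan25}, equivalently the Sobolev space of~\cite[Definition 4., Remark 2.]{goda2026quasi}. For that space, the cited results give an RMSE bound $C_\epsilon N^{-\alpha-3/2+\epsilon}$ for the median over $S=\Theta(\log_2 N)$ random linear scramblings, valid for $N\geq N_0$. We simply invoke this. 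The only point to check is that the extension by zero off $\Omega_{>0}$ is compatible with the continuity requirement, which the assumption guarantees.

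\textbf{Inner error, reduction step.} We use Lemma~\ref{lem:med.bound} exactly as in~\eqref{eq:inner.bound.max}--\eqref{eq:max.sum}, with $f$ replaced by $\hat h_w,h_w$. This gives
\[
|\mathcal{E}_{{\rm in},w}|\leq \frac1N\sum_{n}\max_s \bigl|\hat h_w(\bs y^{(s,n)})-h_w(\bs y^{(s,n)})\bigr|.
\]
Only points in $\Omega_{>0}$ contribute.

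\textbf{Inner error, pointwise bound.} Fix $\bs y=\bs y^{(s,n)}\in\Omega_{>0}$ and set $K=\hat\mu_M^{(\tilde r)}[g(\bs y,\cdot)]$. Because each sample mean is an average of values of $g\ge g^\ast(\bs y)$, the median also satisfies $K\geq g^\ast(\bs y)$. Since $g\le1$, also $K\le 1$ and $I[g]\le 1$.

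On the complement of $\mathcal{A}^{(s,n)}$ we have $|K-I[g]|\leq b\coloneqq M^{-c'\log(M)/d_2}<1/2$ for $M\ge M_1$. Lemma~\ref{lem:weighted} then gives a difference of at most $C_\rho b^\rho$.

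On $\mathcal{A}^{(s,n)}$ we need an almost-sure bound. Both $K$ and $I[g]$ lie in $[g^\ast,1]$, so
\[
w\,|f(K)-f(I[g])|\leq L_fA_\rho (g^\ast)^\rho\log(1/g^\ast)\leq \frac{L_fA_\rho}{e\rho},
\]
using $\sup_{t\in(0,1]}t^\rho\log(1/t)=1/(e\rho)$. This plays the role of $\Delta$.

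\textbf{Inner error, moment bound.} Combining the two cases,
\[
\max_s|\hat h_w-h_w|\leq C_\rho b^\rho+\frac{L_fA_\rho}{e\rho}\mathbf 1_{\cup_s\mathcal{A}^{(s,n)}}.
\]
Taking the conditional $L^2$ norm given $\bs\tau$, applying Lemma~\ref{lem:total.error.eig} with a union bound over $s$ (as in~\eqref{eq:med.lem.max}), and then taking expectations gives
\[
C_\rho M^{-\rho c'\log(M)/d_2}+\frac{L_fA_\rho}{e\rho}\Bigl(S(4B_{d_2}/\sqrt{\tilde m})^{\tilde r}\Bigr)^{1/2}.
\]
Since the right-hand side is uniform in $n$, averaging over $n$ and applying Minkowski leaves it unchanged. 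Finally, choosing $c<c'$ and $M_0\ge M_1$ so that $C_\rho M^{-\rho c'\log M/d_2}\le M^{-\rho c\log M/d_2}$ absorbs $C_\rho$, which is why $M_0$ depends on $L_f,\rho,A_\rho$.

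\textbf{Super-polynomial decay.} With $R=\Theta(\tilde m^2)$ we have $\tilde r\sim\tilde m^2$. Then $(4B/\sqrt{\tilde m})^{\tilde r}=\exp(-\Theta(\tilde m^2\log\tilde m))$, which beats any polynomial in $M=2^{\tilde m}$ even after multiplying by $S$, since $S$ is at most polynomial.

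The main obstacle is the almost-sure bound on the bad event. Near the boundary, $g^\ast\to0$ and $f=\log$ blows up. The argument relies on $K\ge g^\ast$ (monotonicity of the median of averages) together with the weight decay $w\le A_\rho (g^\ast)^\rho$, and the delicate step is verifying that this yields a constant independent of $\bs y$. The outer step is only as strong as the cited external result, which I take as given.
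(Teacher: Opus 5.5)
Your proposal is correct and follows the paper's proof essentially step by step. The steps are the same: the Minkowski split, the citation for the outer error, the reduction via Lemma~\ref{lem:med.bound} to $\max_s$ of the weighted pointwise differences, Lemma~\ref{lem:weighted} on the good event, the almost-sure bound $L_fA_\rho/(e\rho)$ from $g^{\ast}\leq K, I[g]\leq 1$ on the bad event, the union bound over $s$, and absorbing $C_\rho$ by taking $c<c'$. The only cosmetic difference is in how the two events are combined: you bound $D^{(n)}\leq C_\rho b^\rho+\frac{L_fA_\rho}{e\rho}\mathbf{1}_{\mathcal{A}_n}$ pointwise and apply Minkowski, whereas the paper splits $\mathbb{E}[(D^{(n)})^2]$ over $\mathcal{A}_n$ and $\mathcal{A}_n^{\rm c}$ and then uses subadditivity of the square root, which yields the same bound.
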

\begin{proof}
    It follows from Minkowski's inequality that
    \begin{equation}\label{eq:Minkowski.w}
         \mathbb{E}\left[\left|\mathcal{E}_{{\mathrm{tot}},w}\right|^2\right]^{\frac{1}{2}}\leq \mathbb{E}\left[\left|\mathcal{E}_{{\mathrm{in}},w}\right|^2\right]^{\frac{1}{2}}+\mathbb{E}\left[\left|\mathcal{E}_{{\mathrm{out}},w}\right|^2\right]^{\frac{1}{2}}.
    \end{equation}
    It follows from~\cite[Theorem 6.]{Pan25} or~\cite[Theorem 1.]{goda2026quasi} that there exists $C_{\epsilon},N_0<\infty$ such that
    \begin{equation}\label{eq:epsilon.out.w}
        \mathbb{E}\left[\left|\mathcal{E}_{{\mathrm{out}},w}\right|^2\right]^{\frac{1}{2}}\leq C_{\epsilon}N^{-\alpha-\frac{3}{2}+\epsilon}
    \end{equation}
    for all $N\geq N_0$. To bound the inner error, we proceed as in~\eqref{eq:max.sum}. It follows that
    \begin{align}
        \left|\mathcal{E}_{{\mathrm{in}},w}\right|{}&\leq \max_{s=1,\ldots,S}\frac{1}{N}\sum_{n=1}^{N}w\left(\bs{y}^{(s,n)}\right)\left|f\left(\hat{\mu}_{M}^{(\tilde{r})}[g(\bs{y}^{(s,n)},\cdot)]\right)-f\left(I[g(\bs{y}^{(s,n)},\cdot)]\right)\right|\nonumber\\
        {}&\leq \frac{1}{N}\sum_{n=1}^{N}\underbrace{\max_{s=1,\ldots,S}w\left(\bs{y}^{(s,n)}\right)\left|f\left(\hat{\mu}_{M}^{(\tilde{r})}[g(\bs{y}^{(s,n)},\cdot)]\right)-f\left(I[g(\bs{y}^{(s,n)},\cdot)]\right)\right|}_{\eqqcolon D^{(n)}}.
    \end{align}
    From Minkowski's inequality, it follows that
    \begin{align}
        \mathbb{E}\left[\left|\mathcal{E}_{{\mathrm{in}},w}\right|^2\right]^{\frac{1}{2}}{}&\leq \frac{1}{N}\sum_{n=1}^{N}\mathbb{E}\left[\left(D^{(n)}\right)^2\right]^{\frac{1}{2}}\nonumber\\
        {}&\leq \max_{n=1,\ldots,N}\mathbb{E}\left[\left(D^{(n)}\right)^2\right]^{\frac{1}{2}}.
    \end{align}
    For $1\leq n\leq N$ fixed, let $\mathcal{A}^{(s,n)}$ be as in~\eqref{def:A.s.n} and define
    \begin{equation}
        \mathcal{A}_n\coloneqq \bigcup_{s=1}^{S}\mathcal{A}^{(s,n)}.
    \end{equation}
    Lemma~\ref{lem:median.moment}\eqref{eq:med.lem.prob} and a union bound over the $S$ outer indices yield
    \begin{equation}\label{eq:union.bound.S}
        \mathbb{P}\left(\mathcal{A}_n\Big|\left(\bs{y}^{(s,n)}\right)_{s=1}^{S}\right)\leq S\left(\frac{4B_{d_2}}{\sqrt{\tilde{m}}}\right)^{\tilde{r}},
    \end{equation}
    where we used that the inner scramblings are independent across $1\leq r\leq R$. Next, we write
    \begin{align}\label{eq:exp.A.n.split}
        \mathbb{E}\left[\left(D^{(n)}\right)^2\Big|\left(\bs{y}^{(s,n)}\right)_{s=1}^{S}\right]={}&\mathbb{E}\left[\left(D^{(n)}\right)^2\mathbf{1}_{\mathcal{A}_n^{\rm{c}}}\Big|\left(\bs{y}^{(s,n)}\right)_{s=1}^{S}\right]\nonumber\\
        {}&+\mathbb{E}\left[\left(D^{(n)}\right)^2\mathbf{1}_{\mathcal{A}_n}\Big|\left(\bs{y}^{(s,n)}\right)_{s=1}^{S}\right].
    \end{align}
    From Lemma~\ref{lem:weighted}, it follows that there exists $M_1<\infty$ such that
    \begin{equation}
       \mathbb{E}\left[\left(D^{(n)}\right)^2\mathbf{1}_{\mathcal{A}_n^{\rm{c}}}\Big|\left(\bs{y}^{(s,n)}\right)_{s=1}^{S}\right]\leq C_{\rho}^2M^{-\frac{2\rho c'}{d_2}\log(M)} 
    \end{equation}
    for all $M\geq M_1$. Moreover, it follows that there exists $M_0<\infty$, $M_0\geq M_1$ such that
        \begin{equation}\label{eq:exp.A.n.split.1}
       \mathbb{E}\left[\left(D^{(n)}\right)^2\mathbf{1}_{\mathcal{A}_n^{\rm{c}}}\Big|\left(\bs{y}^{(s,n)}\right)_{s=1}^{S}\right]\leq M^{-\frac{2\rho c}{d_2}\log(M)}
    \end{equation}
    for all $M\geq M_0$. Next, we derive a bound for the second term in~\eqref{eq:exp.A.n.split}. For $\bs{y}^{(s,n)}\in\Omega_{>0}$ for $1\leq s\leq S$, $1\leq n\leq N$, it follows that
    \begin{equation}
        g^{\ast}(\bs{y}^{(s,n)})\leq \hat{\mu}_M^{(\tilde{r})}[g(\bs{y}^{(s,n)},\cdot)]\leq 1
    \end{equation}
    and also that
        \begin{equation}
        g^{\ast}(\bs{y}^{(s,n)})\leq I[g(\bs{y}^{(s,n)},\cdot)]\leq 1.
    \end{equation}
    From Assumption~\ref{asu:weighted.compatibility}, it follows that
       \begin{align} {}&w\left(\bs{y}^{(s,n)}\right)\left|f\left(\hat{\mu}_{M}^{(\tilde{r})}[g(\bs{y}^{(s,n)},\cdot)]\right)-f\left(I[g(\bs{y}^{(s,n)},\cdot)]\right)\right|\nonumber\\
       {}&\leq w\left(\bs{y}^{(s,n)}\right)L_{f}\left|\log\left(\hat{\mu}_{M}^{(\tilde{r})}[g(\bs{y}^{(s,n)},\cdot)]\right)-\log\left(I[g(\bs{y}^{(s,n)},\cdot)]\right)\right|\nonumber\\
       {}&=w\left(\bs{y}^{(s,n)}\right)L_{f}\left|\log\left(\frac{\hat{\mu}_{M}^{(\tilde{r})}[g(\bs{y}^{(s,n)},\cdot)]}{I[g(\bs{y}^{(s,n)},\cdot)]}\right)\right|\nonumber\\
       {}&\leq w\left(\bs{y}^{(s,n)}\right)L_{f}\log\left(\frac{1}{g^{\ast}(\bs{y}^{(s,n)})}\right)\nonumber\\
        {}&\leq L_{f}A_{\rho}\left(g^{\ast}(\bs{y}^{(s,n)})\right)^{\rho}\log\left(\frac{1}{g^{\ast}(\bs{y}^{(s,n)})}\right)\nonumber\\
        {}&\leq \frac{L_{f}A_{\rho}}{e\rho},
    \end{align} 
    independent of $\bs{y}^{(s,n)}$, where the last step follows from $\max_{u\in(0,1]}u^{\rho}\log(1/u)=1/(e\rho)$ by a similar derivation as in~\eqref{eq:u.log.1} to~\eqref{eq:u.log.2}. For $\bs{y}^{(s,n)}$ in the complement of $\Omega_{>0}$,
    \begin{equation}
        w\left(\bs{y}^{(s,n)}\right)\left|f\left(\hat{\mu}_{M}^{(\tilde{r})}[g(\bs{y}^{(s,n)},\cdot)]\right)-f\left(I[g(\bs{y}^{(s,n)},\cdot)]\right)\right|=0,
    \end{equation}
    and thus, it follows that
    \begin{equation}
        \left|\mathcal{E}_{{\mathrm{in}},w}\right|\leq\frac{L_{f}A_{\rho}}{e\rho}
    \end{equation}
    almost surely, independent of $1\leq s\leq S$, $1\leq n\leq N$, and thus also that
        \begin{equation}
        D^{(n)}\leq\frac{L_{f}A_{\rho}}{e\rho}
    \end{equation}
    almost surely. It follows from~\eqref{eq:union.bound.S} that
    \begin{align}\label{eq:exp.A.n.split.2}
        \mathbb{E}\left[\left(D^{(n)}\right)^2\mathbf{1}_{\mathcal{A}_n}\Big|\left(\bs{y}^{(s,n)}\right)_{s=1}^{S}\right]{}&\leq \left(\frac{L_{f}A_{\rho}}{e\rho}\right)^2\mathbb{P}\left(\mathcal{A}_n\Big|\left(\bs{y}^{(s,n)}\right)_{s=1}^{S}\right)\nonumber\\
        {}&\leq \left(\frac{L_{f}A_{\rho}}{e\rho}\right)^2S\left(\frac{4B_{d_2}}{\sqrt{\tilde{m}}}\right)^{\tilde{r}}.
    \end{align}
    Combining the results in~\eqref{eq:exp.A.n.split}, \eqref{eq:exp.A.n.split.1}, and~\eqref{eq:exp.A.n.split.2} yields
    \begin{equation}\label{eq:exp.Dn.y}
        \mathbb{E}\left[\left(D^{(n)}\right)^2\Big|\left(\bs{y}^{(s,n)}\right)_{s=1}^{S}\right]\leq M^{-\frac{2\rho c}{d_2}\log(M)}+\left(\frac{L_{f}A_{\rho}}{e\rho}\right)^2S\left(\frac{4B_{d_2}}{\sqrt{\tilde{m}}}\right)^{\tilde{r}}.
    \end{equation}
    The right-hand side of~\eqref{eq:exp.Dn.y} is independent of $(\bs{y}^{(s,n)})_{s=1}^{S}$ and independent of $n$; thus, it follows that
        \begin{equation}\label{eq:exp.Dn}
        \mathbb{E}\left[\left(D^{(n)}\right)^2\right]\leq M^{-\frac{2\rho c}{d_2}\log(M)}+\left(\frac{L_{f}A_{\rho}}{e\rho}\right)^2S\left(\frac{4B_{d_2}}{\sqrt{\tilde{m}}}\right)^{\tilde{r}}
    \end{equation}
    and moreover that
    \begin{equation}
        \mathbb{E}\left[\left|\mathcal{E}_{{\mathrm{in}},w}\right|^2\right]\leq M^{-\frac{2\rho c}{d_2}\log(M)}+\left(\frac{L_{f}A_{\rho}}{e\rho}\right)^2S\left(\frac{4B_{d_2}}{\sqrt{\tilde{m}}}\right)^{\tilde{r}}.
    \end{equation}
    From the subadditivity of the square root, it follows that
        \begin{equation}\label{eq:Minkowski.inner.w}
        \mathbb{E}\left[\left|\mathcal{E}_{{\mathrm{in}},w}\right|^2\right]^{\frac{1}{2}}\leq M^{-\frac{\rho c}{d_2}\log(M)}+\frac{L_{f}A_{\rho}}{e\rho}\left(S\left(\frac{4B_{d_2}}{\sqrt{\tilde{m}}}\right)^{\tilde{r}}\right)^{\frac{1}{2}}
    \end{equation}
for all $M\geq M_0$. Finally, combining the result in~\eqref{eq:Minkowski.w} with the results in~\eqref{eq:epsilon.out.w} and~\eqref{eq:Minkowski.inner.w} completes the proof.
\end{proof}

\begin{Lemma}[Weighted compatibility of the EIG integrands]\label{lem:weighted.compatibility.EIG}
    Let $\bs{G}$ be as in Assumption~\ref{asu:model}. Moreover, let $\bs{\gamma}=(\gamma_1,\ldots,\gamma_{d_1-d_2})$ with $\gamma_{i}>1$ for $1\leq i\leq d_1-d_2$ and set
    \begin{equation}
        U\coloneqq\sup_{\bs{x},\bs{y}_1\in[0,1]^{d_2}}\left\lVert \bs{\Sigma}_{\bs{\varepsilon}}^{-\frac{1}{2}}(\bs{G}(\bs{y}_1)-\bs{G}(\bs{x}))\right\rVert.
    \end{equation}
    It follows from Assumption~\ref{asu:model} that $U<\infty$. Let $g=e^{-\Lambda_{\bs{\gamma}}}$ with $\Lambda_{\bs{\gamma}}$ as in~\eqref{def:Lambda.gamma} and $w_{\bs{\gamma}}:(0,1)^{d_1-d_2}\to(0,\infty)$ be as in~\eqref{def:w}. Moreover, let $\gamma_{\rm{min}}\coloneqq\min_i\gamma_i$, $\gamma_{\rm{max}}\coloneqq\max_i\gamma_i$ and
    \begin{equation}
        \rho^{\ast}\coloneqq \frac{\gamma_{\rm{min}}^2-1}{\gamma_{\rm{max}}^2}
    \end{equation}
    and thus $0<\rho^{\ast}<1$. Then, for every $0<\rho<\rho^{\ast}$, it follows that there exists $A_{\rho}<\infty$ such that
    \begin{equation}
        w(\bs{y}_2)\leq A_{\rho}(g^{\ast}(\bs{y}))^{\rho}
    \end{equation}
    for all $\bs{y}_2\in(0,1)^{d_1-d_2}$.
\end{Lemma}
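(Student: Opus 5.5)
We bound $g^{\ast}$ from below by an explicit Gaussian in $\bs{\Phi}^{-1}(\bs{y}_2)$, which $w_{\bs{\gamma}}$, itself a Gaussian factor with a smaller rate, then dominates. Write $\bs{v}\coloneqq\bs{\Phi}^{-1}(\bs{y}_2)\in\mathbb{R}^{d_1-d_2}$ and $\bs{a}\coloneqq\bs{\Sigma}_{\bs{\varepsilon}}^{-1/2}(\bs{G}(\bs{y}_1)-\bs{G}(\bs{x}))$, so that $\lVert\bs{a}\rVert\leq U$ uniformly in $\bs{x},\bs{y}_1$. Note that $U<\infty$ because Assumption~\ref{asu:model} with $\bs{\alpha}=\bs{0}$ gives $\lVert\bs{G}\rVert\leq A_{\bs{G}}$, and the smallest eigenvalue of $\bs{\Sigma}_{\bs{\varepsilon}}$ is bounded away from zero. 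Also $0<\rho^{\ast}<1$ because $\gamma_{\min}^2-1<\gamma_{\min}^2\leq\gamma_{\max}^2$ and $\gamma_{\min}>1$.

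\emph{Step 1 (upper bound on $\Lambda_{\bs{\gamma}}$).} For any $\kappa>0$, Young's inequality gives $\lVert\bs{a}+\bs{D}_{\bs{\gamma}}\bs{v}\rVert^2\leq(1+\kappa)\lVert\bs{D}_{\bs{\gamma}}\bs{v}\rVert^2+(1+\kappa^{-1})U^2$. Hence
\begin{equation*}
\Lambda_{\bs{\gamma}}(\bs{y},\bs{x})\leq\frac{1+\kappa}{2}\gamma_{\max}^2\lVert\bs{v}\rVert^2+\frac{1+\kappa^{-1}}{2}U^2
\end{equation*}
for all $\bs{x}$. Taking the infimum over $\bs{x}$ yields
\begin{equation*}
g^{\ast}(\bs{y})\geq e^{-\frac{1+\kappa^{-1}}{2}U^2}\,e^{-\frac{1+\kappa}{2}\gamma_{\max}^2\lVert\bs{v}\rVert^2}>0.
\end{equation*}
In particular every $\bs{y}$ with $\bs{y}_2\in(0,1)^{d_1-d_2}$ lies in $\Omega_{>0}$.

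\emph{Step 2 (bound on $w_{\bs{\gamma}}$).} Directly from~\eqref{def:w},
\begin{equation*}
w_{\bs{\gamma}}(\bs{y}_2)\leq\prod_i\gamma_i\,e^{-\frac{1}{2}(\gamma_{\min}^2-1)\lVert\bs{v}\rVert^2}.
\end{equation*}

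\emph{Step 3 (comparison).} Combining the two steps,
\begin{equation*}
\frac{w_{\bs{\gamma}}(\bs{y}_2)}{(g^{\ast}(\bs{y}))^{\rho}}\leq\prod_i\gamma_i\,e^{\frac{\rho(1+\kappa^{-1})}{2}U^2}\exp\left(-\tfrac12\left[(\gamma_{\min}^2-1)-\rho(1+\kappa)\gamma_{\max}^2\right]\lVert\bs{v}\rVert^2\right).
\end{equation*}
Since $\rho<\rho^{\ast}$, we choose $\kappa>0$ with $\rho(1+\kappa)\leq\rho^{\ast}$, for instance $\kappa=\rho^{\ast}/\rho-1$. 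Then the bracket is nonnegative, and the exponential factor is at most $1$ for every $\bs{v}$. This gives the claim with
\begin{equation*}
A_{\rho}=\prod_i\gamma_i\,\exp\left(\frac{\rho}{2}\left(1+\frac{\rho}{\rho^{\ast}-\rho}\right)U^2\right).
\end{equation*}

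The only delicate point is Step 1. The cross term $2\bs{a}\cdot\bs{D}_{\bs{\gamma}}\bs{v}$ in $\Lambda_{\bs{\gamma}}$ must be absorbed without losing the quadratic rate by more than a factor $(1+\kappa)$; this is where the strict inequality $\rho<\rho^{\ast}$ is consumed. It also explains why $A_{\rho}$ blows up as $\rho\uparrow\rho^{\ast}$.
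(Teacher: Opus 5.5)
Your proof is correct and takes essentially the paper's route: both bound $w_{\bs{\gamma}}$ by $\prod_i\gamma_i\,e^{-\frac{1}{2}(\gamma_{\min}^2-1)\lVert\bs{\Phi}^{-1}(\bs{y}_2)\rVert^2}$, bound $\sup_{\bs{x}}\Lambda_{\bs{\gamma}}$ from above to get a Gaussian lower bound on $g^{\ast}$, and use the slack $\rho<\rho^{\ast}$ to absorb the cross term. The paper does this last step with the triangle inequality $\sqrt{2\Lambda_{\bs{\gamma}}}\leq U+\gamma_{\max}t$ and then maximizes a concave quadratic in $t$; this amounts to your Young's inequality with $\kappa=\rho^{\ast}/\rho-1$, and it gives exactly the same constant $A_{\rho}$.
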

For the proof, see Appendix~\ref{app:weighted.EIG}. It follows from Lemma~\ref{lem:weighted.compatibility.EIG} that $h_{\rm{IS}}$ satisfies Assumption~\ref{asu:weighted.compatibility} with $w(\bs{y})=w_{\bs{\gamma}}(\bs{y}_2)$ and $\Omega_{>0}=[0,1]^{d_2}\times(0,1)^{d_1-d_2}$.

\begin{Lemma}[Regularity of the weighted EIG integrand]\label{lem:mixed.derivatives.is}
Let $h_{\rm{IS}}$  be as in~\eqref{def:importance.sampling.integrand}. Moreover, let $\alpha\in\mathbb{N}\cup\{0\}$. Given Assumption~\ref{asu:model}, if $\gamma_i>\sqrt{\alpha+2}$ for every $1\leq i\leq d_1-d_2$, then for every $\bs{\beta}\in\{0,1,\ldots,\alpha+1\}^{d_1}$, the mixed partial derivative $h_{\rm{IS}}^{(\bs{\beta})}$ exists on the open cube $(0,1)^{d_1}$ and extends continuously to $[0,1]^{d_1}$.
\end{Lemma}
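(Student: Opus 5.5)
We write $h_{\rm IS}(\bs y)=F(\bs y)\,w_{\bs\gamma}(\bs y_2)$ with $F(\bs y)\coloneqq\log\bigl(\int_{[0,1]^{d_2}}e^{-\Lambda_{\bs\gamma}(\bs y,\bs x)}\di{}\bs x\bigr)$. Each mixed derivative $h_{\rm IS}^{(\bs\beta)}$ is expanded by the Leibniz rule into finitely many products $F^{(\bs\beta')}w_{\bs\gamma}^{(\bs\beta-\bs\beta')}$. Since $w_{\bs\gamma}$ depends only on $\bs y_2$ and factorizes over coordinates, only derivatives in $\bs y_2$ act on it. The plan has three parts: bound every such product by $C\,(1+\lVert\bs\Phi^{-1}(\bs y_2)\rVert)^{K}\exp(-\tfrac12\sum_i(\gamma_i^2-1)(\Phi^{-1}(y_{2,i}))^2)$ times factors $\phi(\Phi^{-1}(y_{2,i}))^{-k_i}$, show that this expression tends to zero at the boundary, and conclude continuous extension by value $0$ on the faces with $y_{2,i}\in\{0,1\}$. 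In the $\bs y_1$ directions the variable ranges over a compact set with analytic $\bs G$, so there is nothing to prove there. Working in the variables $z_i=\Phi^{-1}(y_{2,i})$, we use $\partial_{y_{2,i}}=\phi(z_i)^{-1}\partial_{z_i}=\sqrt{2\pi}\,e^{z_i^2/2}\partial_{z_i}$. A $k$-fold derivative in $y_{2,i}$ of a function $\psi(z_i)$ is therefore a combination of terms $e^{k z_i^2/2}\,p(z_i)\,\psi^{(j)}(z_i)$ with $j\le k$ and $p$ a polynomial; this follows by induction, since $\partial_z e^{z^2/2}=ze^{z^2/2}$.

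The key estimates concern the $\bs z$-derivatives of $F$ and $w_{\bs\gamma}$. For $w_{\bs\gamma}$ it is immediate that $\partial_{z_i}^j w_{\bs\gamma}=(\text{polynomial in }z_i)\,w_{\bs\gamma}$. For $F$, set $Z(\bs y)=\int e^{-\Lambda_{\bs\gamma}}\di{}\bs x$. The Faà di Bruno formula writes derivatives of $\log Z$ as polynomials in $\partial^{\bs\kappa}Z/Z$. Each $\partial^{\bs\kappa}_{\bs z,\bs y_1}e^{-\Lambda_{\bs\gamma}}$ equals $e^{-\Lambda_{\bs\gamma}}$ times a polynomial in $\bs z$ of degree at most $|\bs\kappa|$, because $\Lambda_{\bs\gamma}$ is quadratic in $\bs z$ with coefficients that are bounded together with their derivatives, by Assumption~\ref{asu:model} and the bound $U<\infty$. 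Differentiating under the integral is justified by dominated convergence on the compact $\bs x$-domain. The result is $|\partial^{\bs\kappa}Z|/Z\le C(1+\lVert\bs z\rVert)^{|\bs\kappa|}$, a polynomial growth bound uniform in $\bs y_1$ that involves no exponential factor. In addition, $|F|\le C(1+\lVert\bs z\rVert^2)$, because $\Lambda_{\bs\gamma}\le \tfrac12(U+\gamma_{\max}\lVert\bs z\rVert)^2$ and $Z\le1$.

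Combining these, every Leibniz term in $h_{\rm IS}^{(\bs\beta)}$ with $\beta_{2,i}\le\alpha+1$ is bounded by
\begin{equation*}
C\,(1+\lVert\bs z\rVert)^{K}\prod_{i=1}^{d_1-d_2}\exp\Bigl(\tfrac{\beta_{2,i}}{2}z_i^2-\tfrac{\gamma_i^2-1}{2}z_i^2\Bigr)\le C(1+\lVert\bs z\rVert)^{K}\prod_{i}e^{-\frac{\gamma_i^2-(\alpha+2)}{2}z_i^2}.
\end{equation*}
Since $\gamma_i^2>\alpha+2$, this bound tends to $0$ as any $|z_i|\to\infty$, uniformly in the remaining variables. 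Continuity on the open cube holds because all pieces are smooth there. To get a continuous extension, we show that $h_{\rm IS}^{(\bs\beta)}(\bs y)\to0$ whenever $\bs y$ approaches a point with some $y_{2,i}\in\{0,1\}$, using the uniform bound. At boundary points where only $\bs y_1$-coordinates lie on the boundary, the function extends smoothly. Every point of $[0,1]^{d_1}\setminus(0,1)^{d_1}$ falls into one of these two cases, so we may define the extension to be $0$ on faces with $y_{2,i}\in\{0,1\}$ and the smooth limit elsewhere, and obtain continuity.

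\textbf{Main obstacle.} The hard part is the uniform polynomial bound on $\partial^{\bs\kappa}Z/Z$. A naive approach would bound $Z$ from below by $e^{-\Lambda_{\max}}$, which introduces a factor $e^{(U+\gamma\lVert\bs z\rVert)^2/2}$ and destroys the decay. We avoid this because the ratio $\partial^{\bs\kappa}Z/Z$ is a weighted average, $\int P(\bs z,\bs y_1,\bs x)\,e^{-\Lambda_{\bs\gamma}}\di{}\bs x\big/\int e^{-\Lambda_{\bs\gamma}}\di{}\bs x$, of a polynomial $P$ whose size is controlled pointwise in $\bs x$. It is therefore bounded by $\sup_{\bs x}|P|$, and this supremum grows only polynomially in $\bs z$. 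Keeping track of the exponents also explains the threshold in the statement: each of the at most $\alpha+1$ derivatives in $y_{2,i}$ costs a factor $e^{z_i^2/2}$, while the weight $w_{\bs\gamma}$ supplies the decay $e^{-(\gamma_i^2-1)z_i^2/2}$. The net exponent $-(\gamma_i^2-(\alpha+2))z_i^2/2$ is negative exactly when $\gamma_i^2>\alpha+2$, which is the stated condition $\gamma_i>\sqrt{\alpha+2}$.
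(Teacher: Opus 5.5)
Your proposal is correct and follows essentially the same route as the paper. You write $h_{\rm IS}=\log(J)\,w_{\bs{\gamma}}$ and bound $\partial^{\bs{b}}J/J$ by the supremum over $\bs{x}$ of the Fa\`{a} di Bruno polynomial, which is the weighted-average argument that avoids any lower bound on $J$. You then track that each $y_{2,i}$-derivative costs a factor $e^{z_i^2/2}$ against the weight's decay $e^{-(\gamma_i^2-1)z_i^2/2}$, giving the net exponent $-\tfrac12(\gamma_i^2-1-\beta_{2,i})z_i^2$. Your use of the operator $(\sqrt{2\pi}e^{z^2/2}\partial_z)^k$ instead of the paper's recursions for $P_k$ and $R_{k,i}$ is only a bookkeeping difference, and your separate treatment of boundary points where only $\bs{y}_1$ lies on the boundary is slightly more careful than the paper.
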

For the proof, see Appendix~\ref{app:der.int.is}. It follows from Lemma~\ref{lem:mixed.derivatives.is} that $h_{\rm{IS}}$ satisfies Assumption~\ref{asu:outer} with parameter $\alpha$ and thus the conditions stated in Lemma~\ref{lem:total.error.eig} are satisfied for the EIG application with uniform prior distribution for $\alpha\in\mathbb{N}\cup\{0\}$ as long as $\gamma_i>\sqrt{\alpha+2}$ for $1\leq i\leq d_1-d_2$. In principle, this importance sampling scheme enables a convergence rate of $\alpha+3/2-\epsilon$ for any $\alpha>0$ and any $\epsilon>0$ in the DLmRQMC estimator. In practice, as $\alpha\to\infty$, this rate is only observed for a very large number of Sobol' points $N$, and numerical issues occur. Another issue is the finite precision used in the scrambling method, see~\cite[Lemma 2.1.]{Pan23}, which we do not address in this study.

\begin{Remark}[Cotangent transform]
    Another method of mapping $\mathbb{R}^{d_1-d_2}$ to $[0,1]^{d_1-d_2}$ is the cotangent transform. This transform could be used instead of the importance sampling scheme introduced in this work. However, our numerical results did not reveal a clear advantage of either method over the other, and thus we focus on the importance sampling scheme for concision.
\end{Remark}

\section{Numerical Results}
\label{sec:numex}
\subsection{Example 1: Polynomial example}
We begin with a simple polynomial example, where
\begin{equation}\label{eq:pol.f}
    f(z)\coloneqq z^2+z, \quad \text{where } z\coloneqq I[g]
\end{equation}
and
\begin{equation}\label{eq:pol.g}
    g(\bs{y},\bs{x})\coloneqq \sum_{i=1}^{d}\left[y_i + x_i +  y_ix_i\right], \quad \text{where } \bs{y},\bs{x}\in[0,1]^{d}.
\end{equation}
This example was used in~\cite{Bar24} to study the effects of the integrand dimension on the convergence behavior of the DLRQMC estimator.
It follows immediately that $g$ and the composition $f\circ I[g]$ are analytic. Moreover, $f$ is Lipschitz on the image of $I[g]$ and the image of $\hat{\mu}_M^{(\tilde{r})}[g(\bs{y},\cdot)]$ for all $\bs{y}\in[0,1]^{d_1}$ for all $M\geq 1$. We first analyze the inner and outer error as defined in~\eqref{eq:total.error} separately and then move to the total error of the nested estimator. To compare the RQMC and the mRQMC estimators for the inner integral, we fix $\bs{y}$ at the arbitrary value $\bs{\bar{y}}\coloneqq(1/2,\ldots,1/2)^{\scriptscriptstyle\mskip-1mu\top\mskip-2mu}\in\mathbb{R}^{d}$.
Figure~\ref{fig:pol} (A) shows that the RMSE over $J=100$ independent replicates of the mRQMC estimator  displays behavior consistent with the super-polynomial convergence rate for dimension $d=10$, while for the RQMC estimator, the RMSE converges at rate $3/2$, as expected. We present the RMSE along with the largest and smallest absolute error of the $J$ replicates as well as the 10th and 90th percentiles as a box plot. The median (50th percentile) of the $J$ replicates is also depicted, although we note that this is not related to the median over the $R$ scramblings of the Sobol' points, which is the main focus of this study. Here, we chose $M=2^{\tilde{m}}$ and $R=2\tilde{m}-1$, where $4\leq\tilde{m}\leq 9$. This linear dependence of $R$ on $\tilde{m}$ was sufficient for this example, see~\cite[Remark 2]{Pan24} for a discussion. The same behavior is also observed for $d=100$, although only for larger $M$ (Figure~\ref{fig:pol} (D)).

To compare the RQMC and the mRQMC estimators for the outer integral, we chose $N=2^{\tilde{n}}$ and $S=2\tilde{n}-1$, where $8\leq\tilde{n}\leq 14$. Panel (B) shows that the RMSE of the mRQMC estimator again displays behavior consistent with the super-polynomial convergence rate, whereas for the RQMC estimator, the RMSE converges at rate $3/2$. The same holds for $d=100$ (Panel E) for $11\leq \tilde{n}\leq 19$.

Finally, to compare the DLRQMC and the DLmRQMC estimators for the nested integral, we present the RMSE
as a function of $K=NM$ samples for $d=10$ in Panel (C). Here, we choose $9\leq \tilde{n}\leq 14$ and $\tilde{m}=\tilde{n}-5$. The RMSE of the mRQMC estimator again displays results consistent with the super-polynomial convergence rate, whereas for the RQMC estimator, the RMSE converges at rate $3/4$. The primary motivation for replacing the mean with the median in the RQMC estimator is that the median removes large outliers in the RMSE. In Figure~\ref{fig:pol.ccdf} we demonstrate that increasing the number of Sobol' points leads to more extreme outliers for both the inner loop (Panel (A)) and the outer loop (Panel (B)). For this figure, we used $J=2000$ replicates of the outer and inner RQMC estimators with $R=S=1$ and evaluated the complementary CDF (CCDF) for each replicate. We computed the standardized absolute errors for the inner estimator as follows:
\begin{equation}
    \hat{I}_{{\mathrm{SAE},\mathrm{inner}}}^{(j)}\coloneqq\left|\frac{\frac{1}{M}\sum_{m=1}^M g(\bs{\bar{y}},\bs{x}^{(j,m)})-I[g(\bs{\bar{y}},\cdot)]}{\tilde{\sigma}_{\mathrm{inner}}}\right|, \quad 1\leq j\leq J,
\end{equation}
where $\tilde{\sigma}_{\mathrm{inner}}$ is the sample standard deviation over the $J=2000$ replicates. Moreover, we computed the standardized absolute errors for the outer estimator as follows:
\begin{equation}
    \hat{I}_{{\mathrm{SAE},\mathrm{outer}}}^{(j)}\coloneqq\left|\frac{\frac{1}{N}\sum_{n=1}^N f\left(I\left[g(\bs{y}^{(j,n)},\cdot)\right]\right)-I\left[f\left(I[g]\right)\right]}{\tilde{\sigma}_{\mathrm{outer}}}\right|, \quad 1\leq j\leq J.
\end{equation}
We present the probability of a standard normal random variable having outliers of a given size $t$ as a reference, that is, two times the survival function of the standard normal. We observe that increasing $M$ and $N$, respectively, leads to fewer outliers; however, these rare outliers increase dramatically in size as $M$ and $N$ grow.

Although the inner, and in particular, the outer, mRQMC estimators are affected by the dimension $d$, this effect is not as dramatic as the curse of dimensionality in the error bounds in Lemma~\ref{lem:total.error} would indicate. The polynomial example described by~\eqref{eq:pol.f} and~\eqref{eq:pol.g} exhibits a very simple additive structure. A much stronger effect of the integrand dimension was observed in our next example from BOED where no such simple structure exists.

\begin{figure}[ht]
	\subfloat[Inner loop $d=10$]{%
		\includegraphics[width=0.43\textwidth]{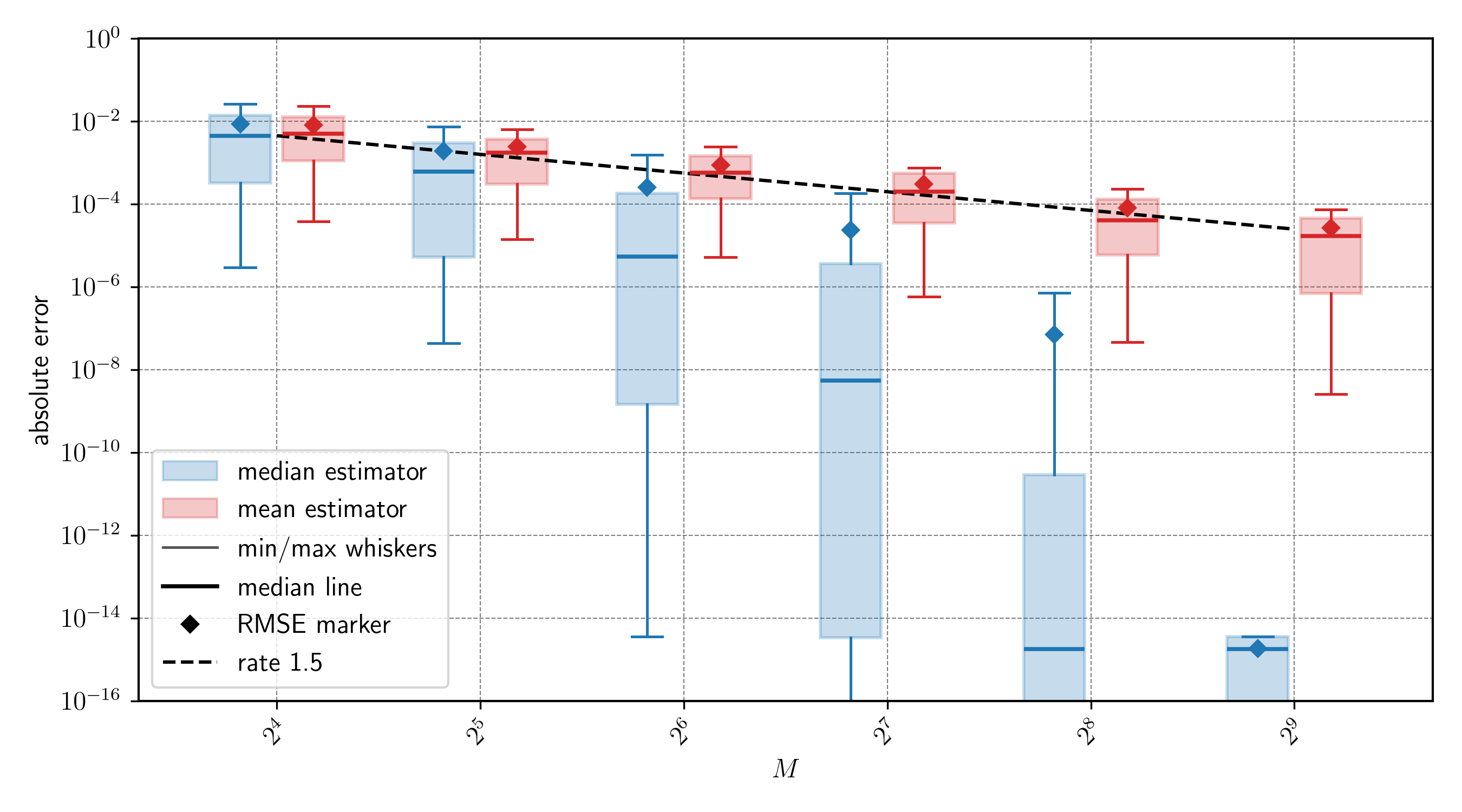}
	}
	\subfloat[Outer loop $d=10$]{%
		\includegraphics[width=0.43\textwidth]{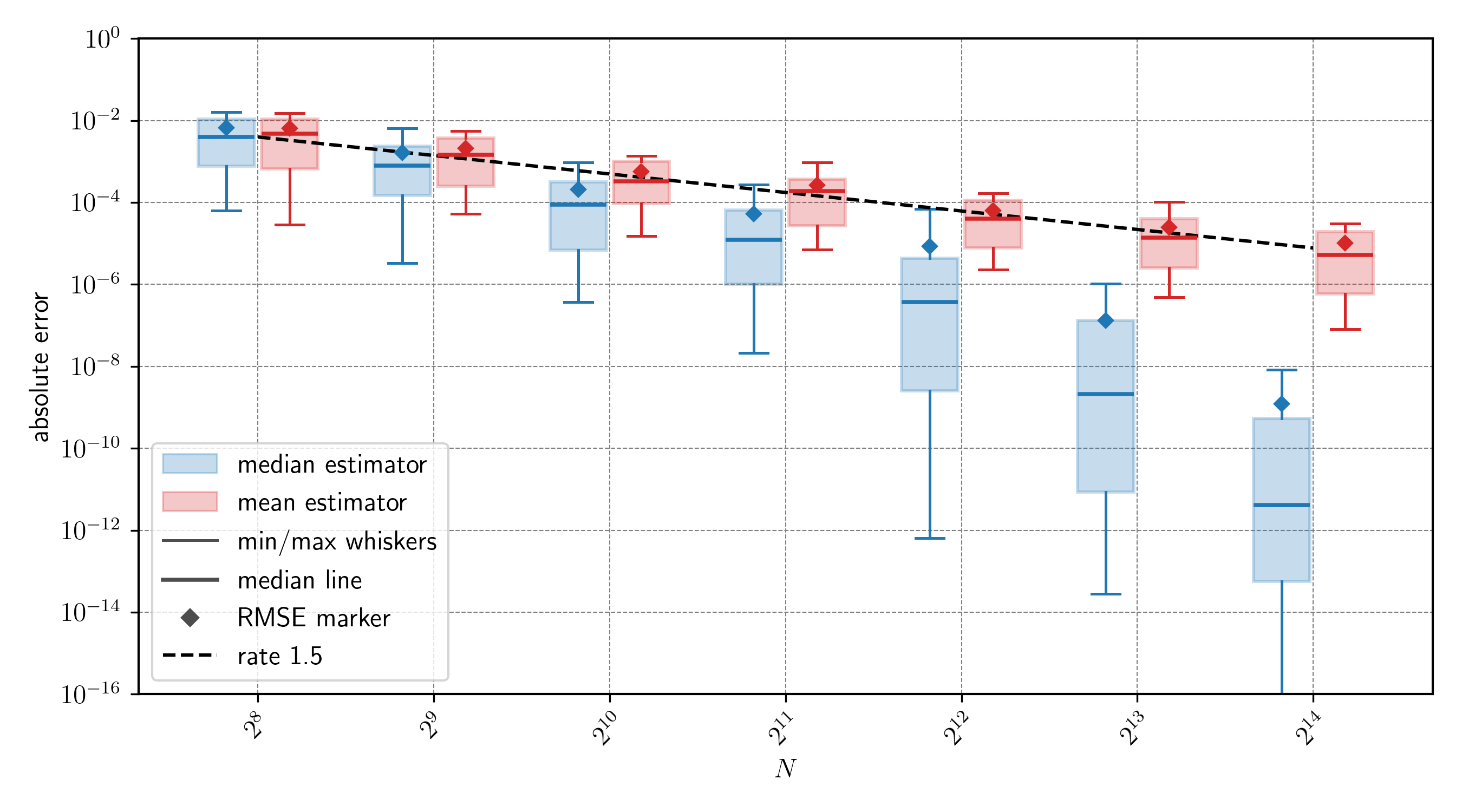}
	}\\
    	\subfloat[Double-loop $d=10$]{%
		\includegraphics[width=0.43\textwidth]{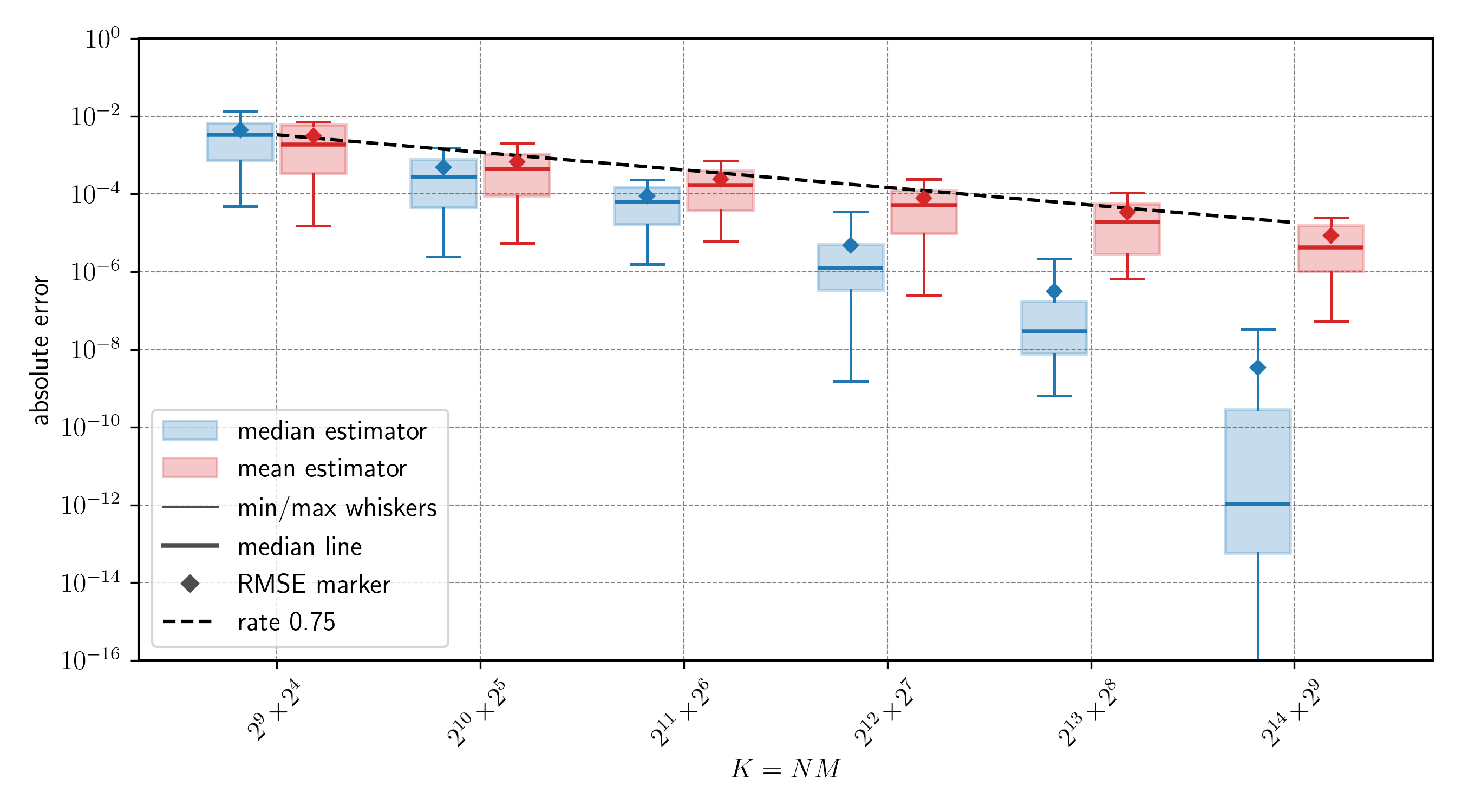}
	}\\
    	\subfloat[Inner loop $d=100$]{%
		\includegraphics[width=0.43\textwidth]{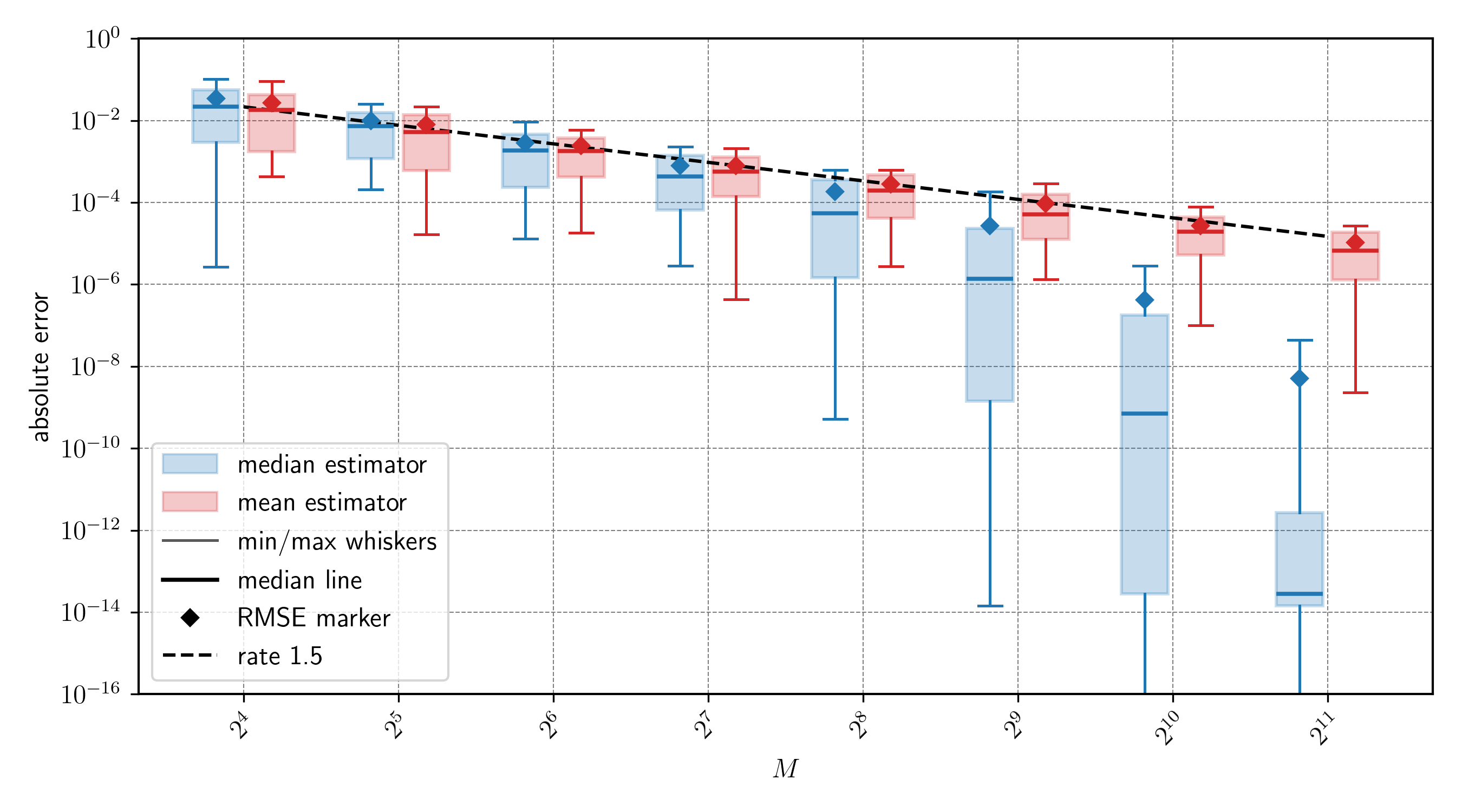}
	}
	\subfloat[Outer loop $d=100$]{%
		\includegraphics[width=0.43\textwidth]{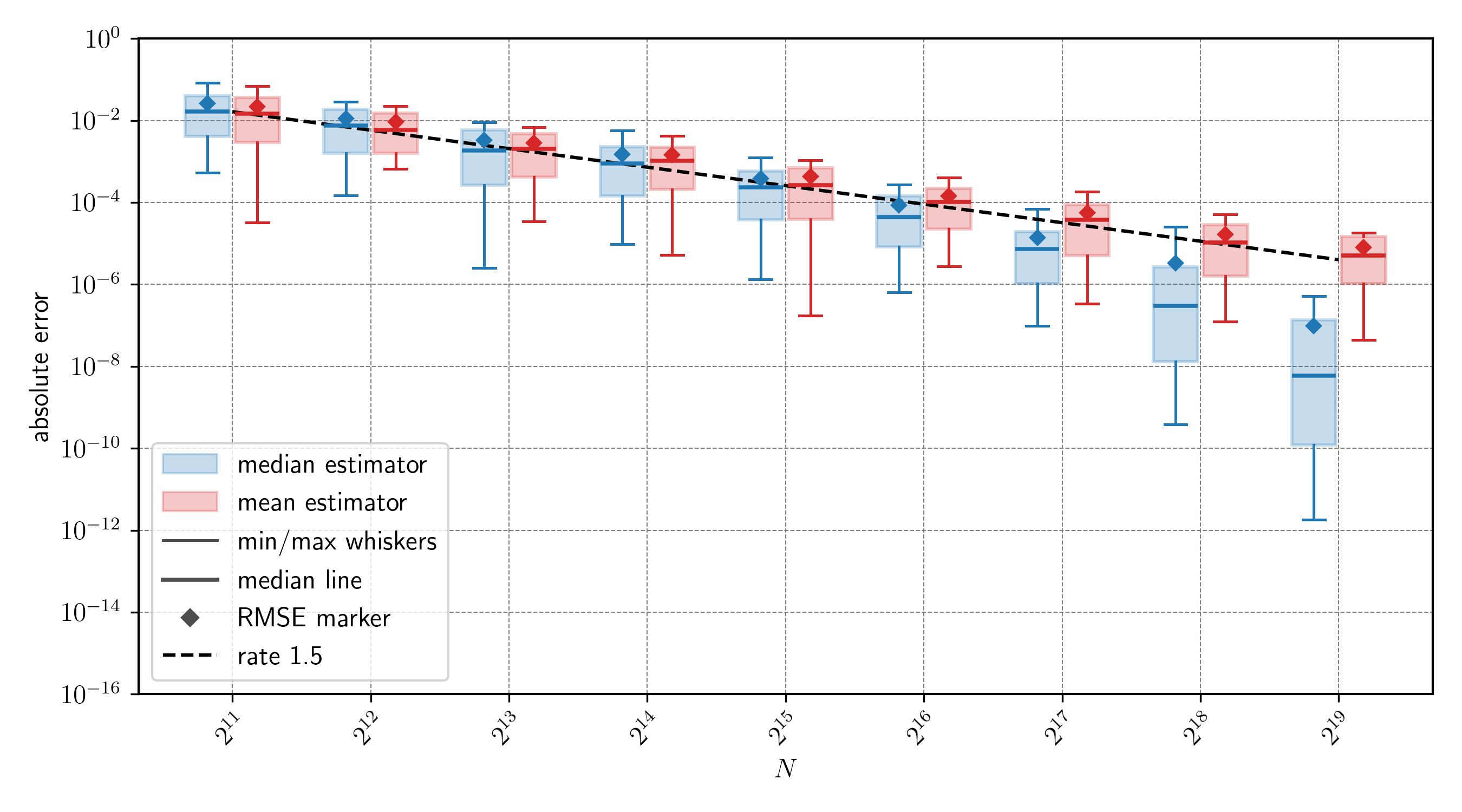}
	}
	\caption{Polynomial Example: Boxplot of $J=100$ replicates of the median RQMC and the standard RQMC estimator in dimension 10 for the inner loop  with fixed outer sample $\bs{\bar{y}}\coloneqq(1/2,\ldots,1/2)^{\scriptscriptstyle\mskip-1mu\top\mskip-2mu}$ (A), for the outer loop with exact inner integration (B), and the double-loop estimator (C). The RMSE of the RQMC estimator converges at rate 1.5 as a function of the inner number of Sobol' points $M$ (A) and as a function of the outer number of Sobol' points $N$ (B). For the double-loop estimator, the RMSE converges at rate 0.75 as a function of the total number of Sobol' points $K=NM$ (C). The RMSE of the median RQMC estimator displays behavior consistent with super-polynomial convergence for all three cases. This behavior emerges for larger $M$ (D) and $N$ (E), respectively, in dimension 100.}
    \label{fig:pol}
\end{figure}

\begin{figure}[ht]
	\subfloat[Inner loop]{%
		\includegraphics[width=0.4\textwidth]{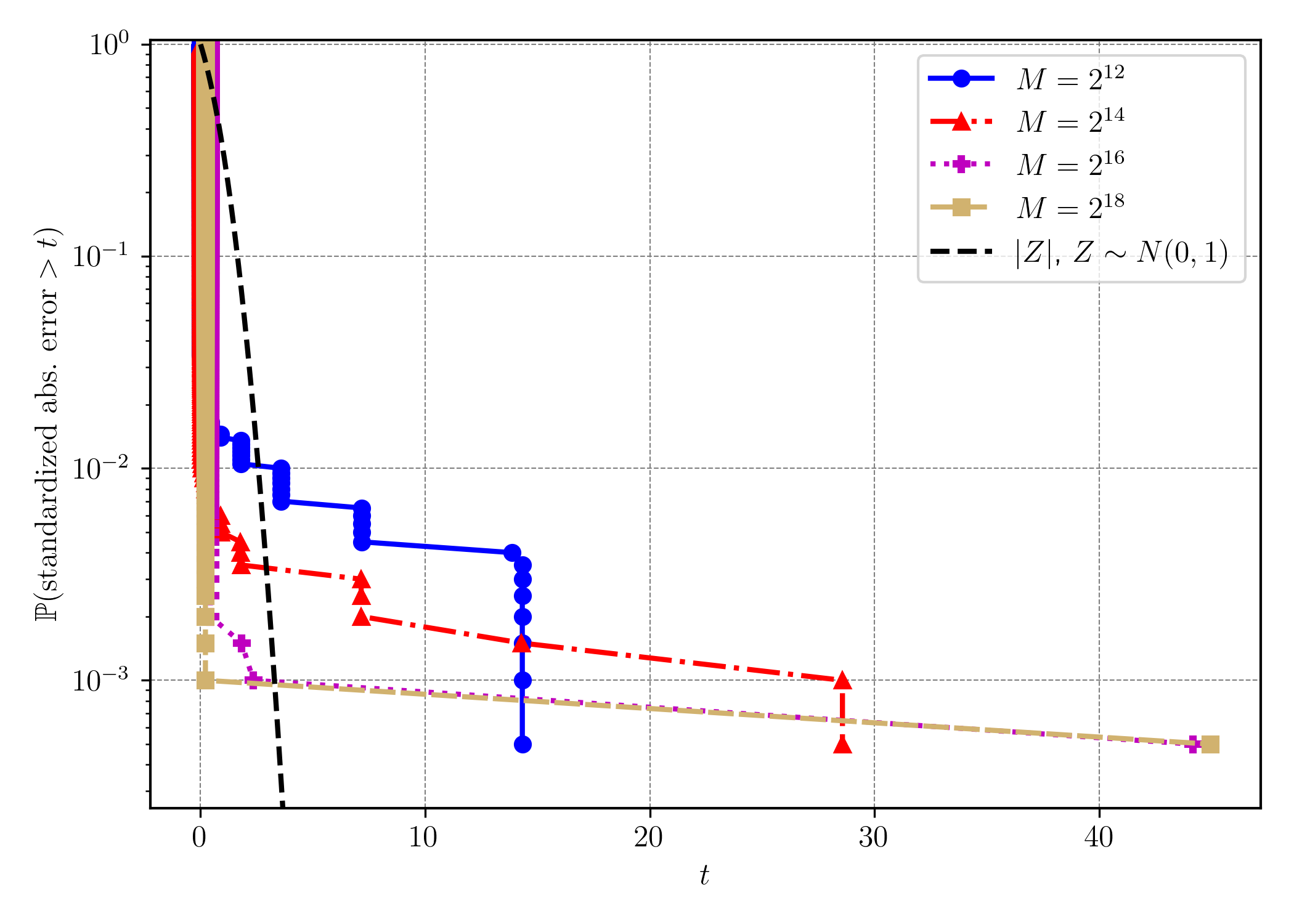}
	}
	\subfloat[Outer loop]{%
		\includegraphics[width=0.4\textwidth]{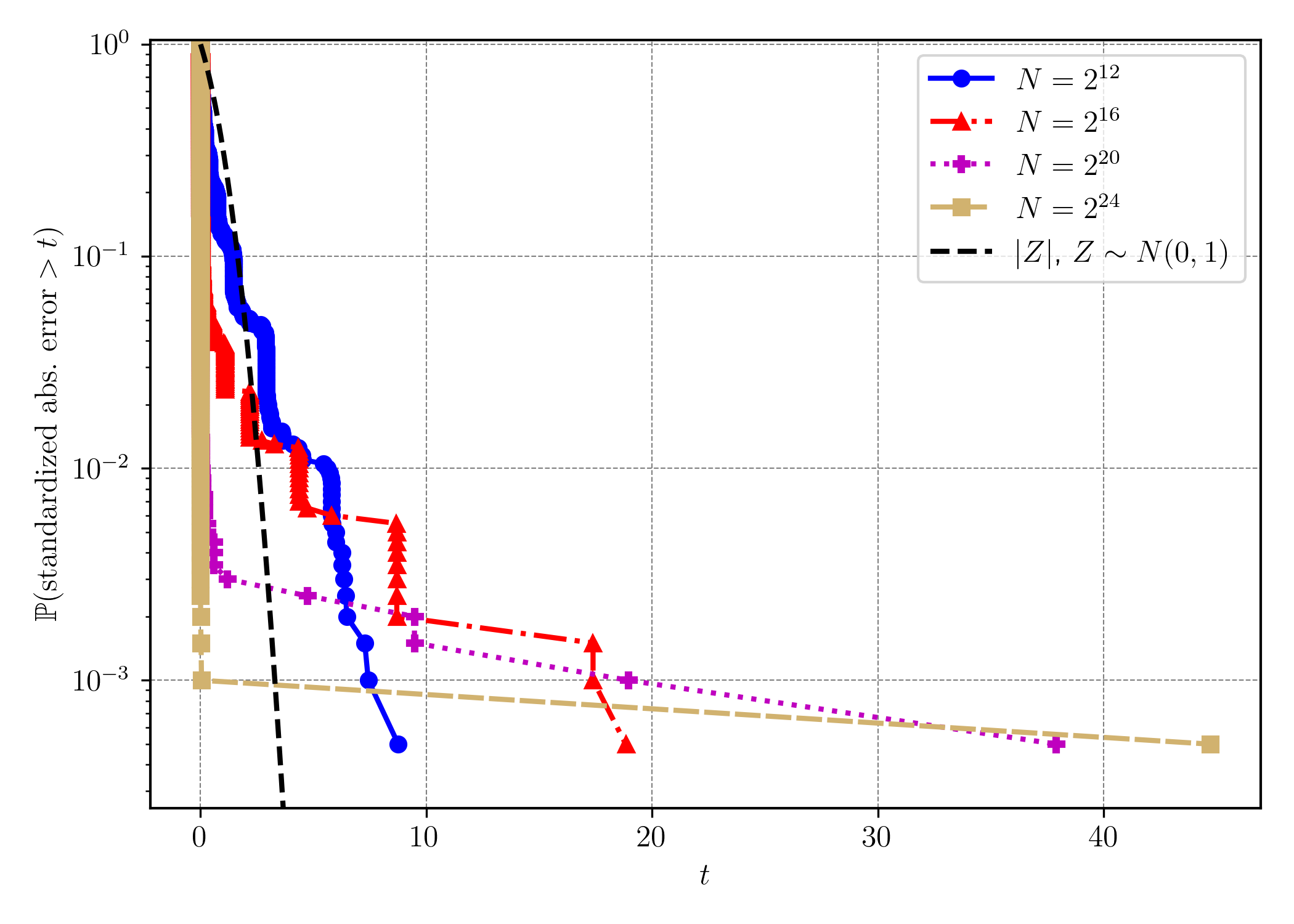}
	}
	\caption{Polynomial example: CCDF of the standardized absolute errors in dimension 10 over $J=2000$ randomizations for the inner loop  with fixed outer sample $\bs{\bar{y}}\coloneqq(1/2,\ldots,1/2)^{\scriptscriptstyle\mskip-1mu\top\mskip-2mu}$ (A) and for the outer loop with exact inner integration (B). The distribution of standardized absolute errors produces increasingly large outliers as the number of Sobol' points in the inner loop $M$ and in the outer loop $N$ increase. The standardized absolute errors of the standard normal are presented as a reference.}
    \label{fig:pol.ccdf}
\end{figure}
\FloatBarrier

\subsection{Example 2: EIG example}
We select a simple example adapted from~\cite{Hua13} as follows:
\begin{equation}\label{def:forward.model.hm}
    G_i(\bs{\theta},\xi_i)=\frac{1}{d_{\theta}}\sum_{j=1}^{d_{\theta}}\left[\theta_j^3\xi_i^2+\theta_je^{-|a_j-\xi_i|}\right], \quad 1\leq i\leq d_y,
\end{equation}
where $(a_1,\ldots,a_{d_{\theta}})\in\mathbb{R}^{d_{\theta}}$ is fixed and $(\xi_1,\ldots,\xi_{d_{y}})\in\mathbb{R}^{d_{y}}$ is the experiment design which we also consider fixed for our purposes. Moreover, the parameters of interest $\bs{\theta}$ are sampled from a uniform distribution, that is, $\bs{\theta}\sim\mathcal{U}([0,1]^{d_{\theta}})$ and we choose $\bs{\Sigma}_{\bs{\varepsilon}}=\sigma_{\varepsilon}^2\bs{I}_{(d_1-d_2)\times (d_1-d_2)}$, where 
$\sigma_{\varepsilon}^2=0.01$ 
and $\bs{I}_{(d_1-d_2)\times (d_1-d_2)}$ is the identity matrix in dimension $d_1-d_2$.  Alternatively, we write~\eqref{def:forward.model.hm} as
\begin{equation}\label{def:forward.model.hm.uhc}
    G_i(\bs{z},\xi_i)=\frac{1}{d_{2}}\sum_{j=1}^{d_{2}}\left[z_j^3\xi_i^2+z_je^{-|a_j-\xi_i|}\right], \quad 1\leq i\leq d_1-d_2,
\end{equation}
and set
\begin{equation}\label{def:forward.model.hm.uhc.vec}
    \bs{G}(\bs{z})\equiv\bs{G}(\bs{z},\bs{\xi})\coloneqq\begin{pmatrix}
        G_1(\bs{z},\xi_1)\\\vdots\\G_{d_1-d_2}(\bs{z},\xi_{d_1-d_2})
    \end{pmatrix}.
\end{equation}
The function defined in~\eqref{def:forward.model.hm.uhc.vec} trivially satisfies Assumption~\ref{asu:model} for fixed $(a_1,\ldots,a_{d_{\theta}})\in\mathbb{R}^{d_{\theta}}$ and $(\xi_1\ldots,\xi_{d_{y}})\in\mathbb{R}^{d_{y}}$.
For this example, we choose the parameter dimension $d_{\theta}=d_2\in\{1,2,3,4\}$ and the data dimension $d_{y}=d_1-d_2=2$; thus, $d_1\in\{3,4,5,6\}$. Figure~\ref{fig:EIG} presents the inner RMSE with the outer samples fixed at $\bs{\bar{y}}=(1/2,\ldots,1/2)^{\scriptscriptstyle\mskip-1mu\top\mskip-2mu}\in[0,1]^{d_1}$ against a high-fidelity reference solution computed with $R=51$ and $M=2^{26}$. We see that the mRQMC estimator achieved convergence rates better than 1.5 in dimensions one and two (Figure~\ref{fig:EIG} Panel (A) and (B)), whereas the RQMC estimator is limited to the rate 1.5, consistent with our theoretical findings. In dimensions three (Panel (C)) and four (Panel (D)), this effect is less pronounced. 

\begin{figure}[ht]
	\subfloat[$d_{2}=1$]{%
		\includegraphics[width=0.42\textwidth]{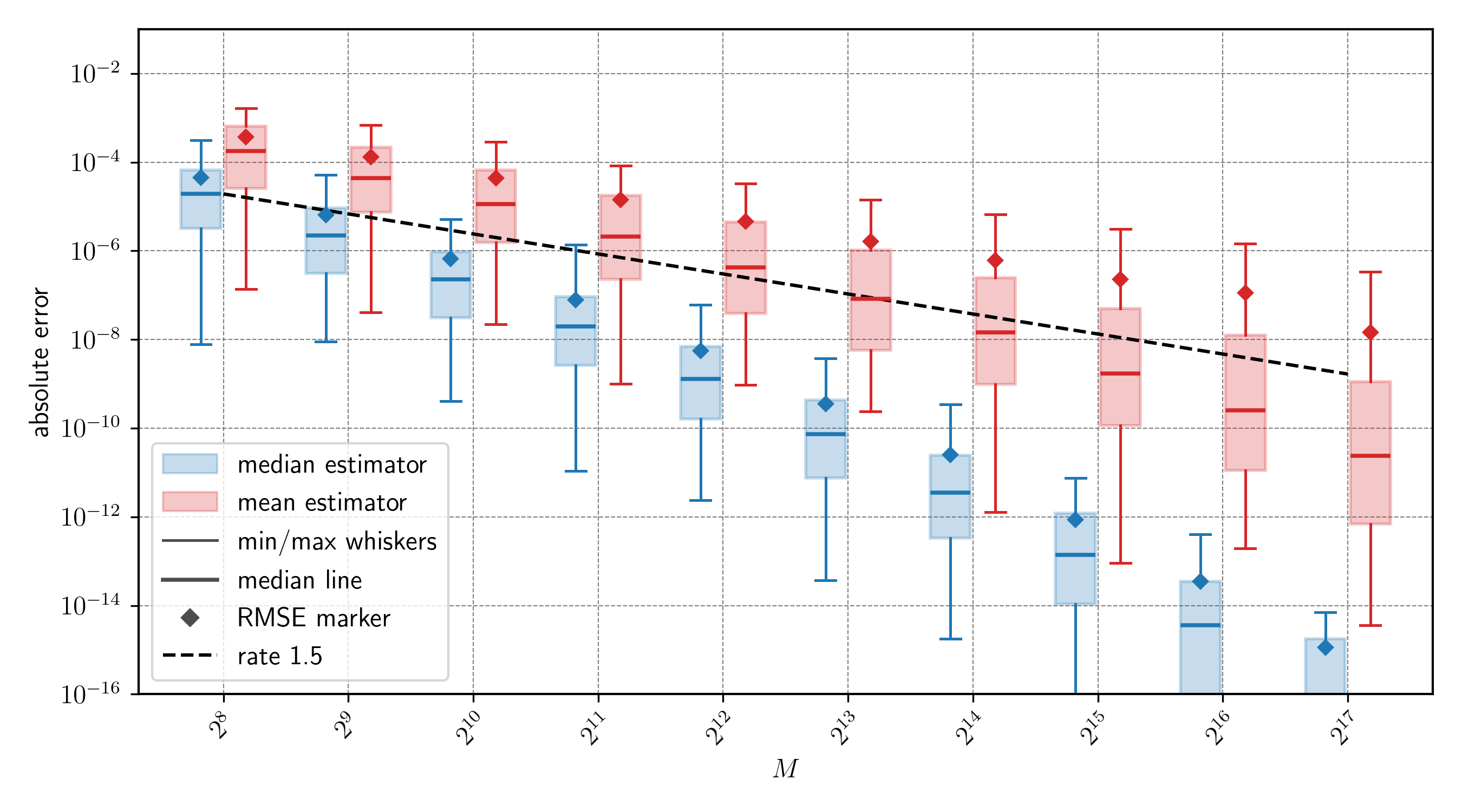}
	}
    \subfloat[$d_{2}=2$]{%
		\includegraphics[width=0.42\textwidth]{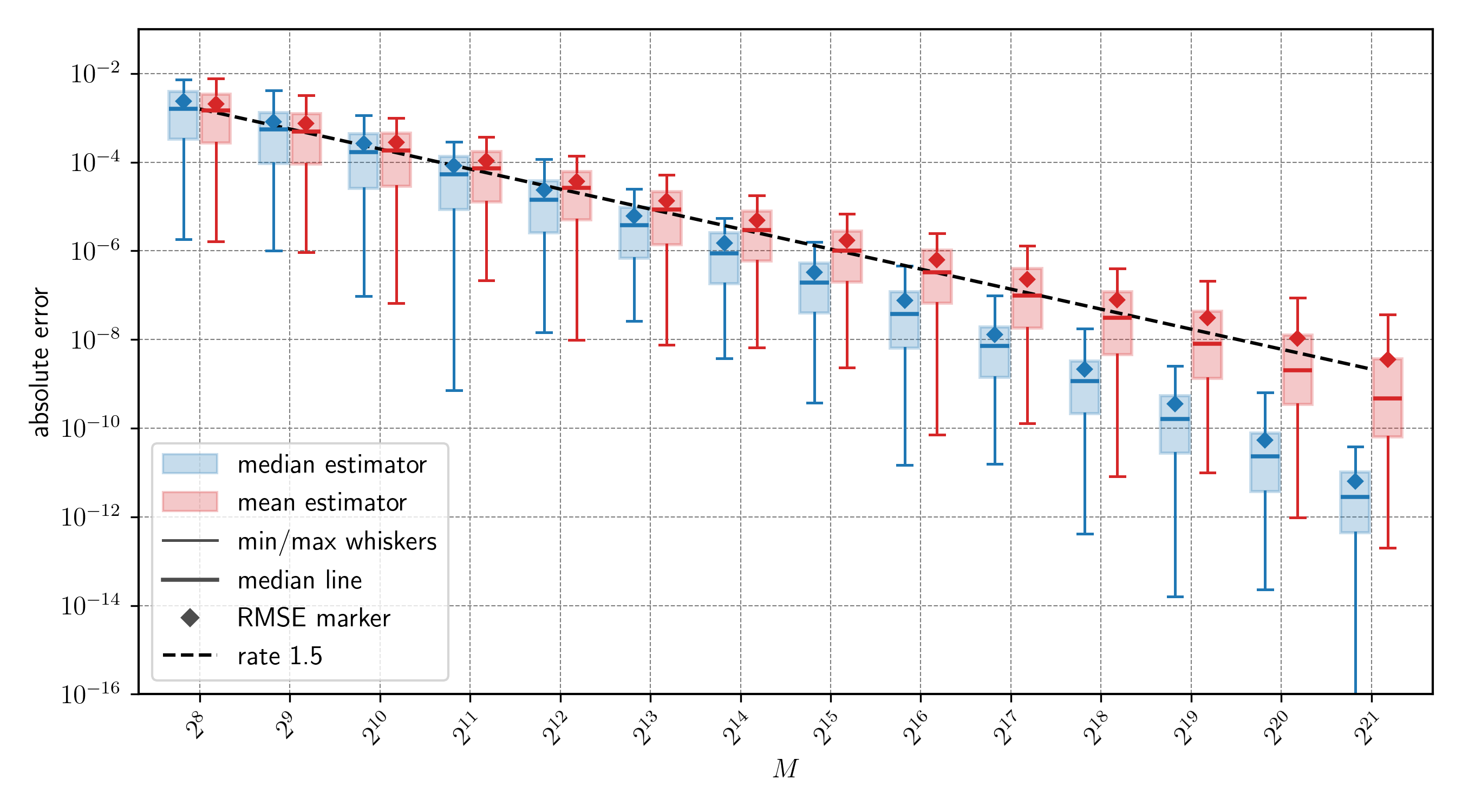}
	}\\
	\subfloat[$d_{2}=3$]{%
		\includegraphics[width=0.42\textwidth]{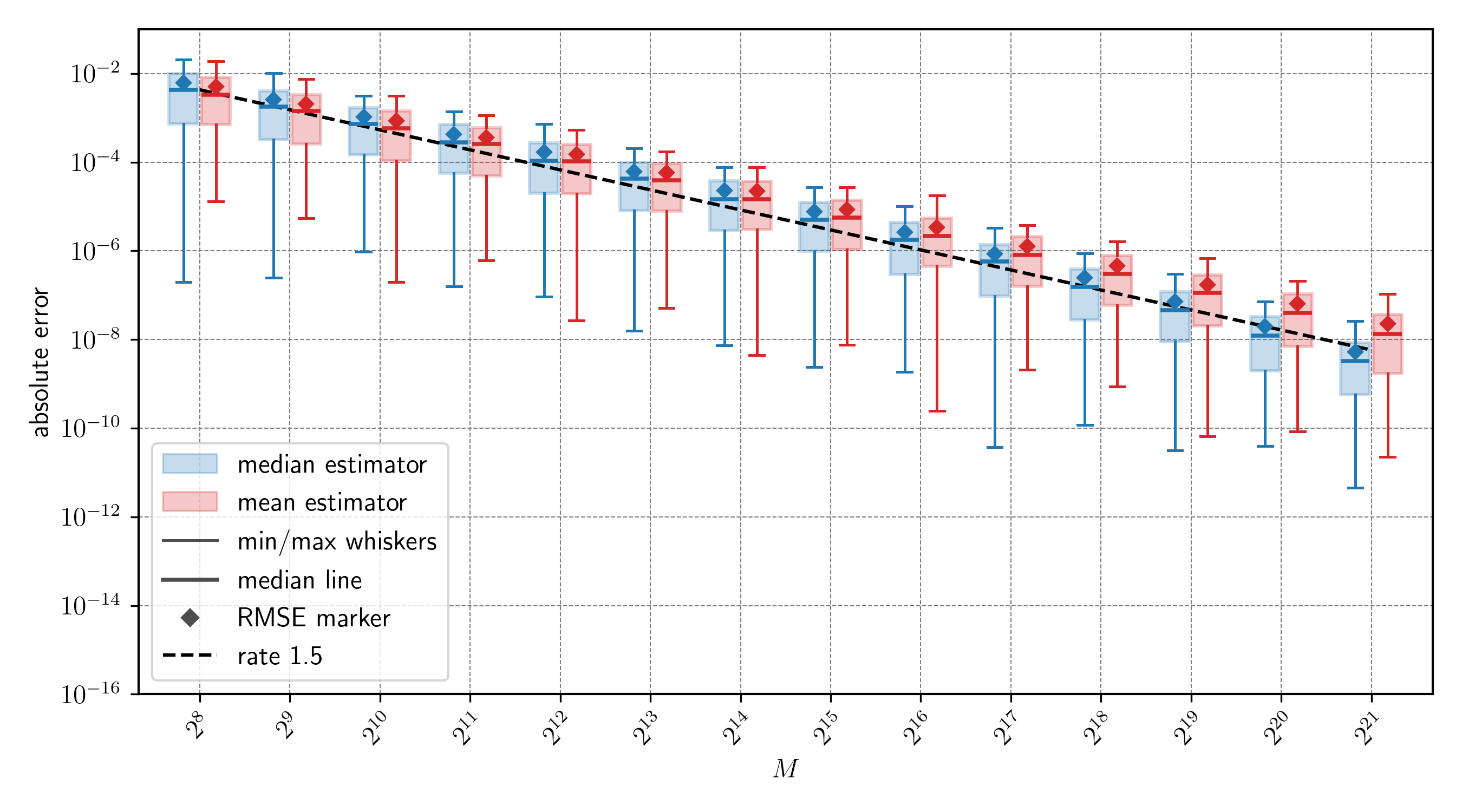}
	}
    \subfloat[$d_{2}=4$]{%
		\includegraphics[width=0.42\textwidth]{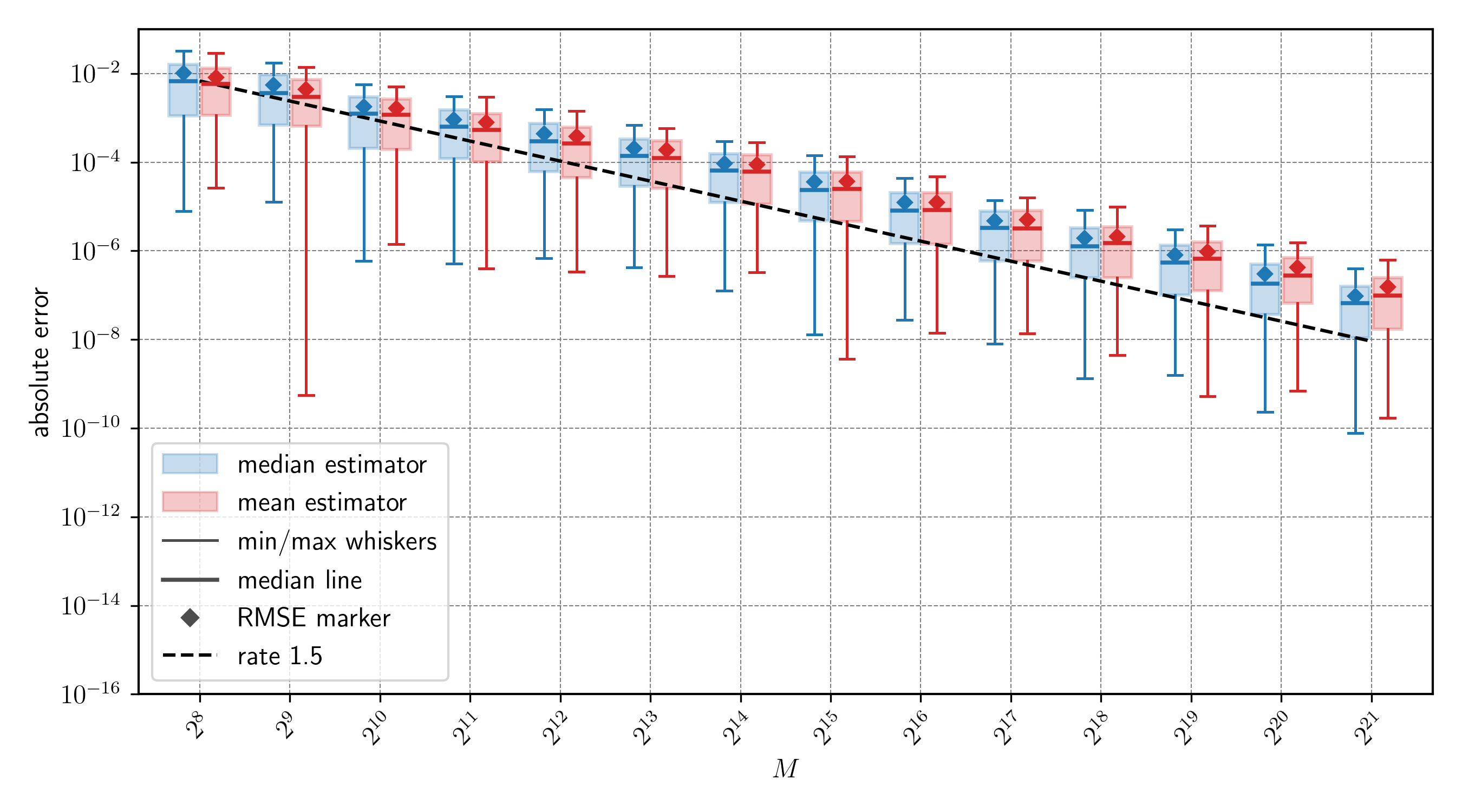}
	}
	\caption{EIG Example: Inner integration error for $J=1000$ replicates of the DLRQMC and the DLmRQMC estimators in various dimensions. The mRQMC estimator achieves convergence rates better than rate 1.5 in one and two dimensions (Panels (A) and (B)). In three dimensions (Panel (C)), the DLmRQMC estimator begins to achieve a convergence better than 1.5 for large sample sizes. In four dimensions (Panel (D)), the DLmRQMC estimator error is only slightly smaller than the DLRQMC estimator error.}
    \label{fig:EIG}
\end{figure}

In Figure~\ref{fig:EIG.outer} we present the outer RMSE against a high-fidelity reference solution computed with $J=1000$, $S=49$ and $N=2^{25}$, where the inner integral is computed using $R=1$ and $M=64$ scrambled Sobol' points that are fixed across all runs to isolate the outer error. The outer variables were obtained using the inverse CDF (left column) and the Gaussian importance sampling scheme (right column). Without importance sampling, both the DLRQMC and the DLmRQMC methods only achieve a convergence rate of one. This is because the composite integrand $f\circ I[g]$ in this case has a boundary singularity and thus displays infinite Hardy--Krause variation. Note that this convergence rate is still better than the rate $0.5$ achieved by DLMC methods. If we use the inverse CDF with importance sampling as described in Section~\ref{sec:eig}, the convergence rate of the DLRQMC method improves to $1.5$ and the DLmRQMC estimator achieves an even better convergence rate in small dimensions. Here we selected an importance sampling parameter $\gamma=(2,2)^{\scriptscriptstyle\mskip-1mu\top\mskip-2mu}$. As the dimension increases, both estimators were limited to rate one, indicating preasymptotic behavior.

\begin{figure}[ht]
	\subfloat[$d_{1}=3$]{%
		\includegraphics[width=0.42\textwidth]{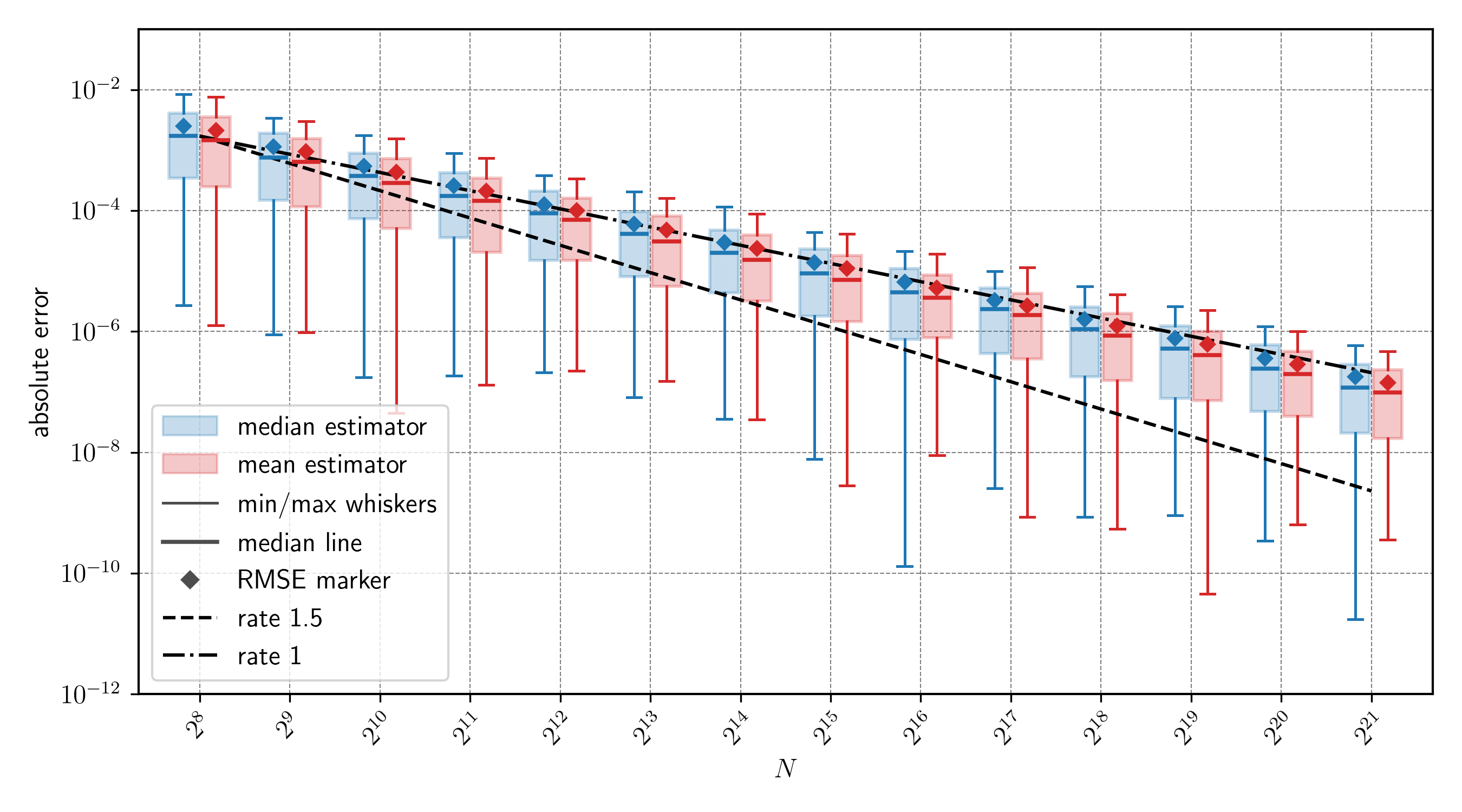}
	}
    	\subfloat[$d_{1}=3$ with IS]{%
		\includegraphics[width=0.42\textwidth]{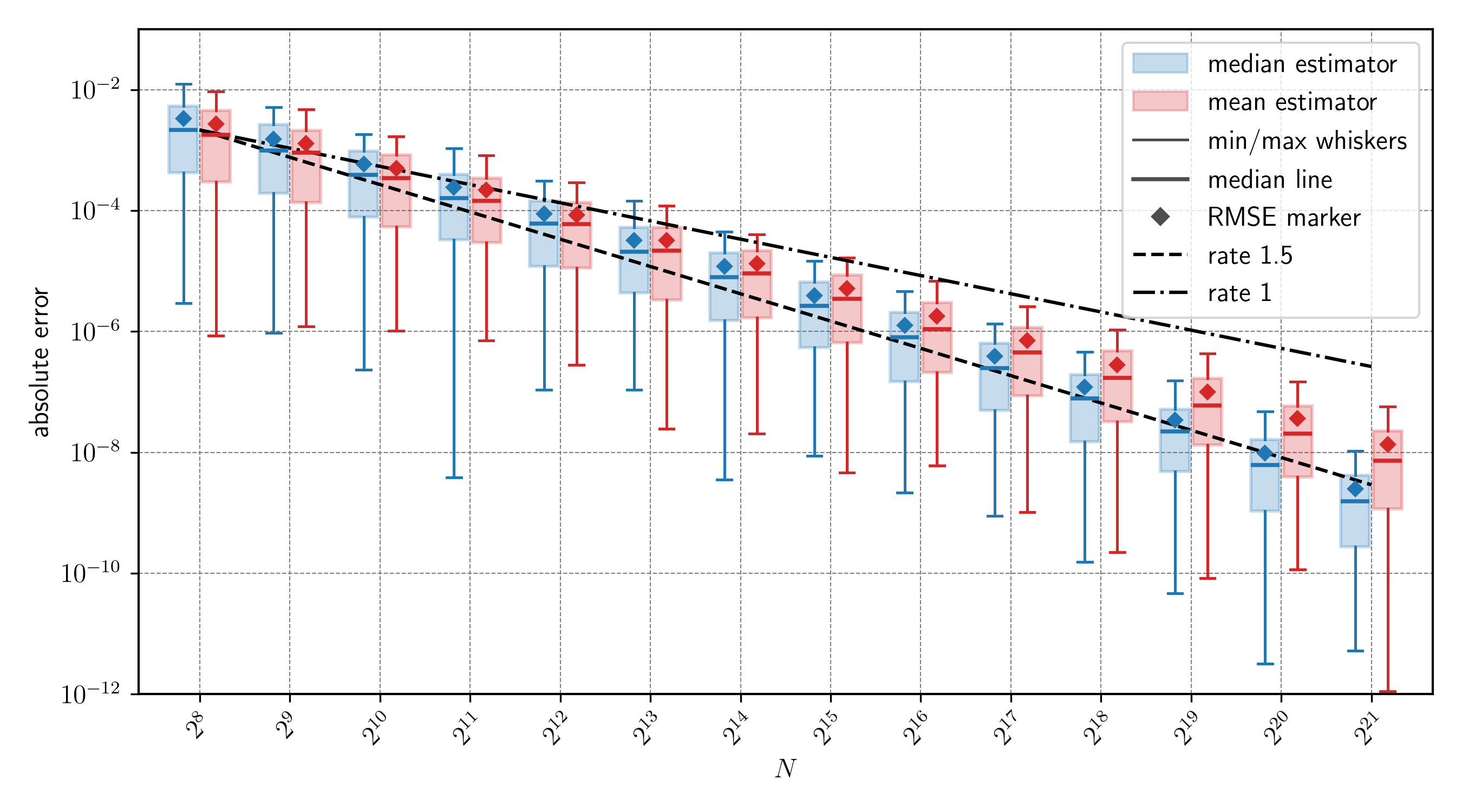}
	}
    \\
    \subfloat[$d_{1}=4$]{%
		\includegraphics[width=0.42\textwidth]{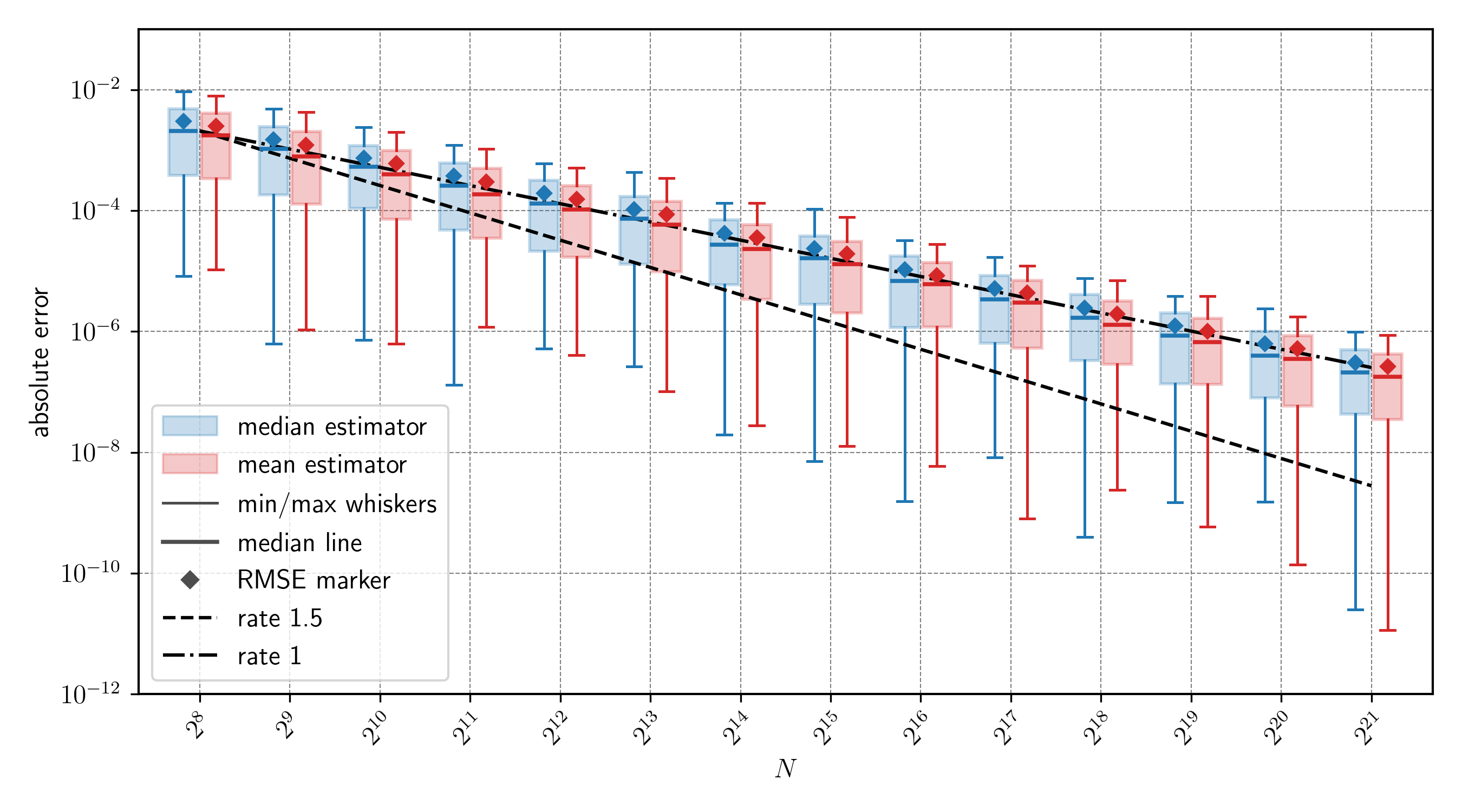}
	}
    \subfloat[$d_{1}=4$ with IS]{%
		\includegraphics[width=0.42\textwidth]{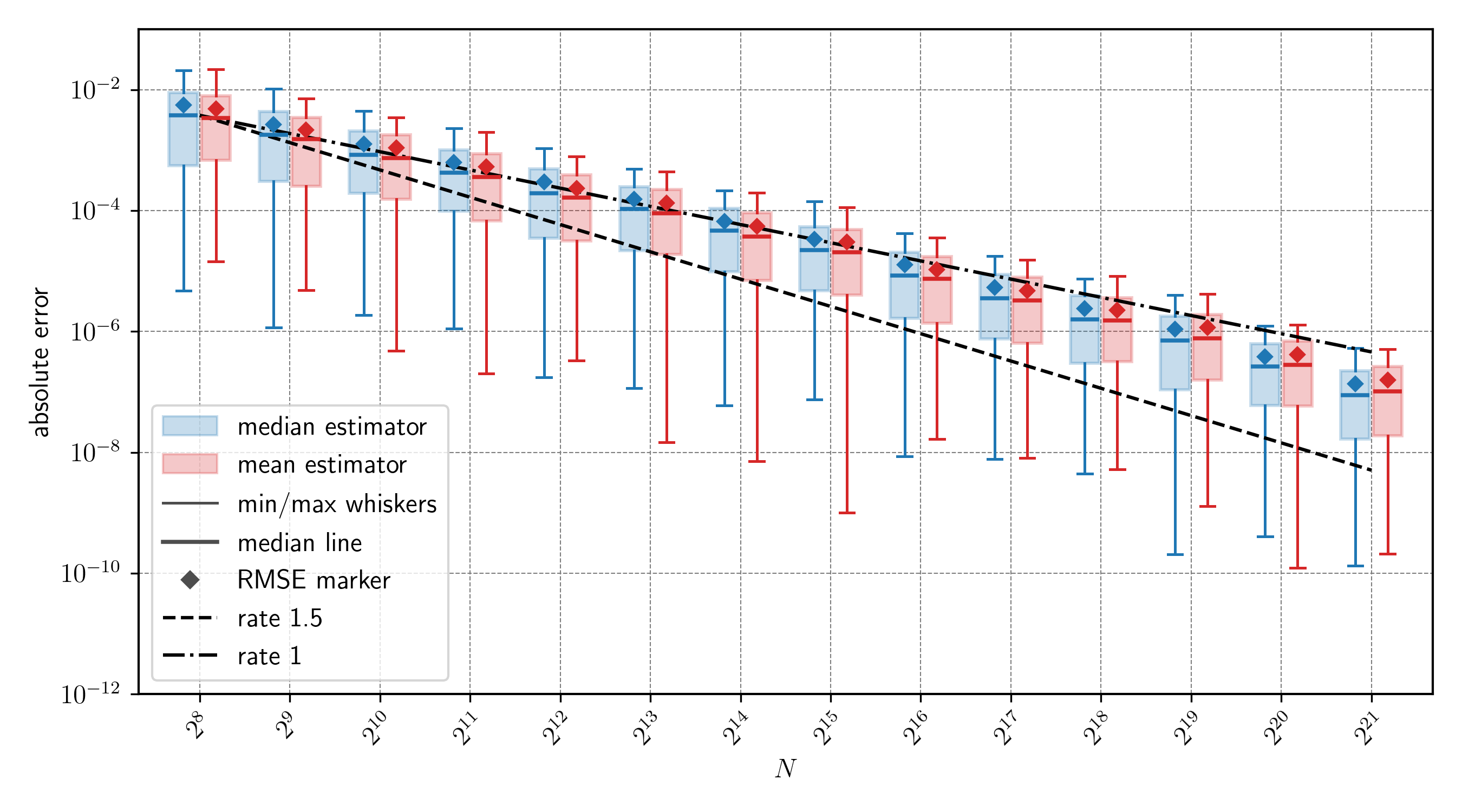}
	}
    \\
	\subfloat[$d_{1}=5$]{%
		\includegraphics[width=0.42\textwidth]{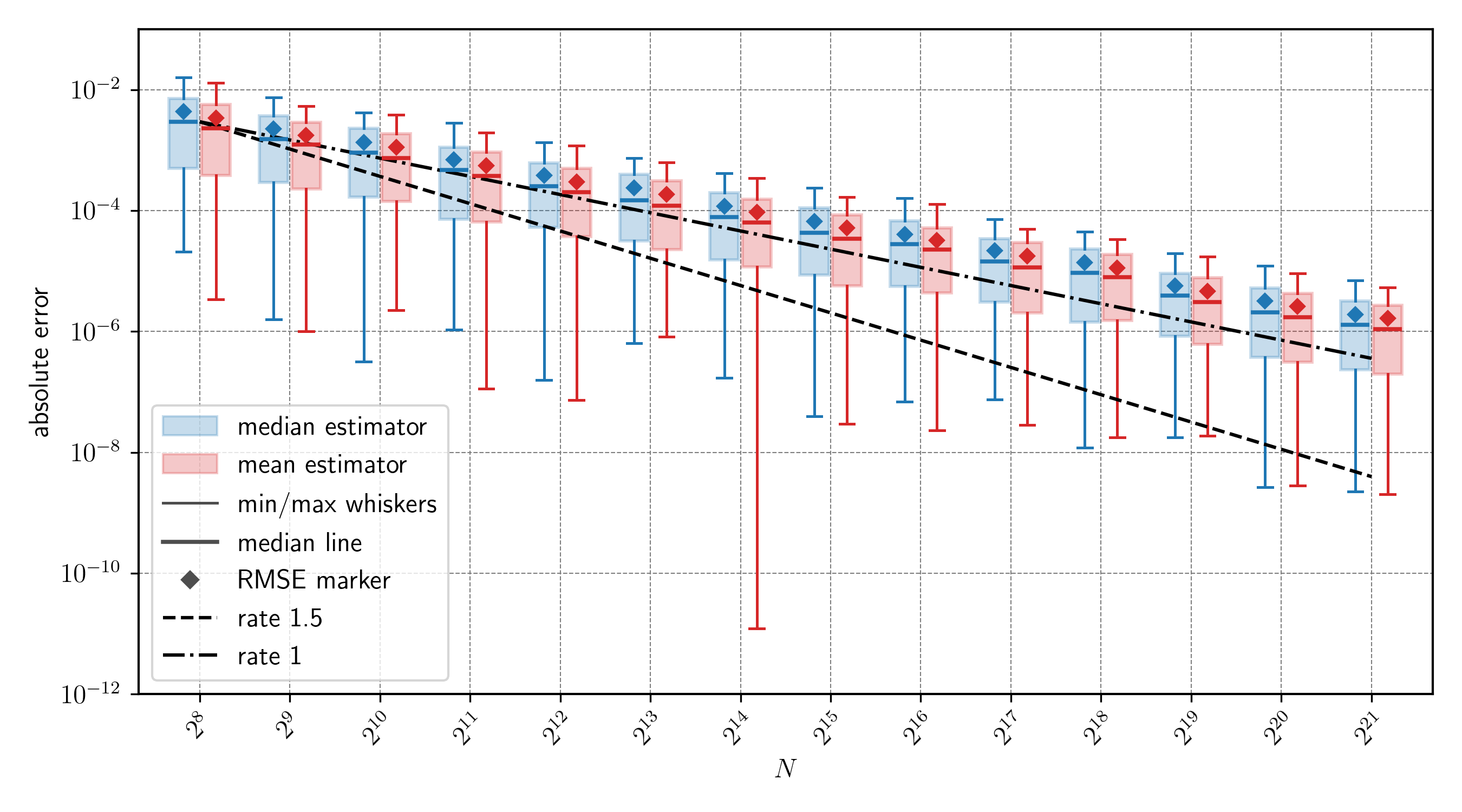}
	}
	\subfloat[$d_{1}=5$ with IS]{%
		\includegraphics[width=0.42\textwidth]{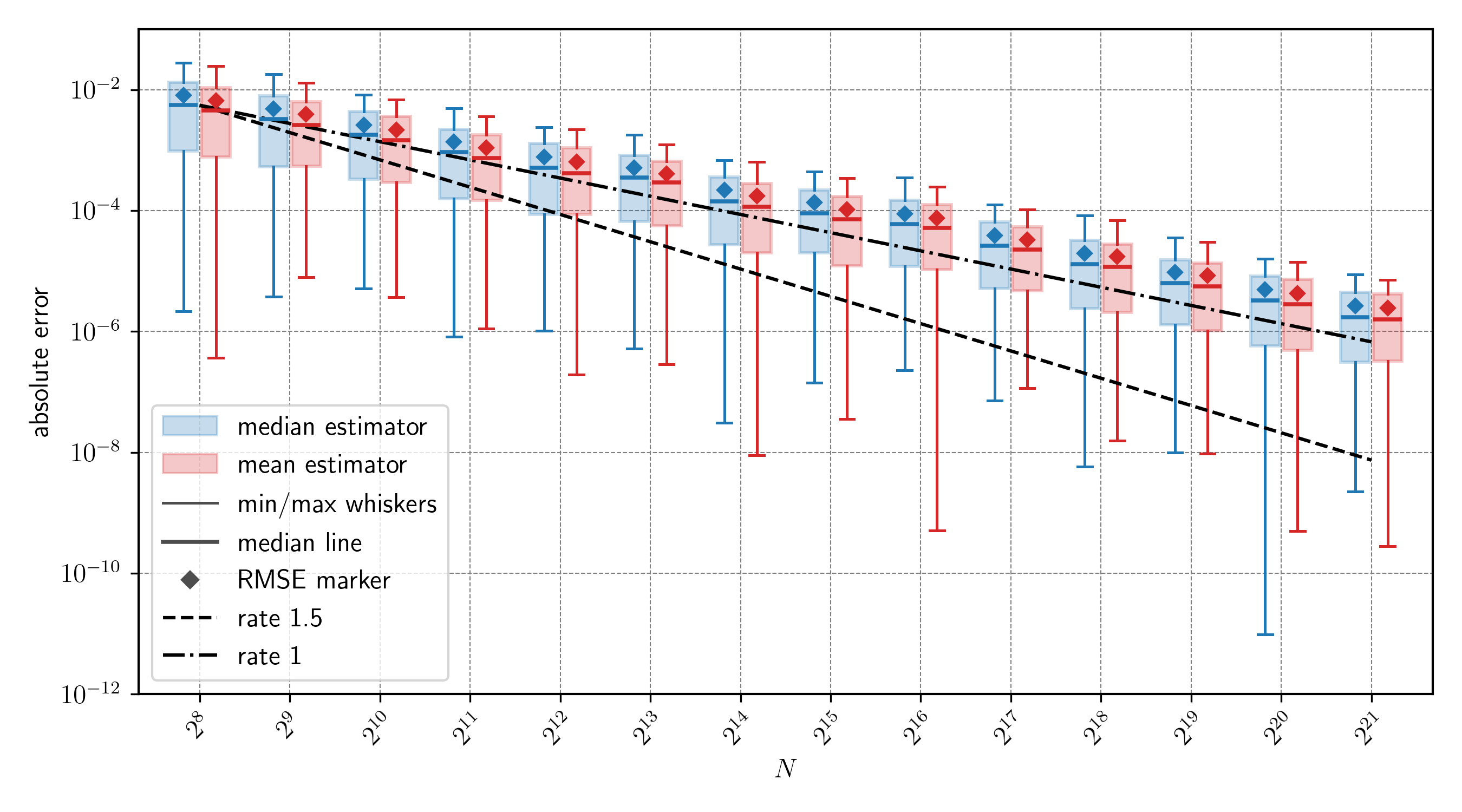}
	}\\
    \subfloat[$d_{1}=6$]{%
		\includegraphics[width=0.42\textwidth]{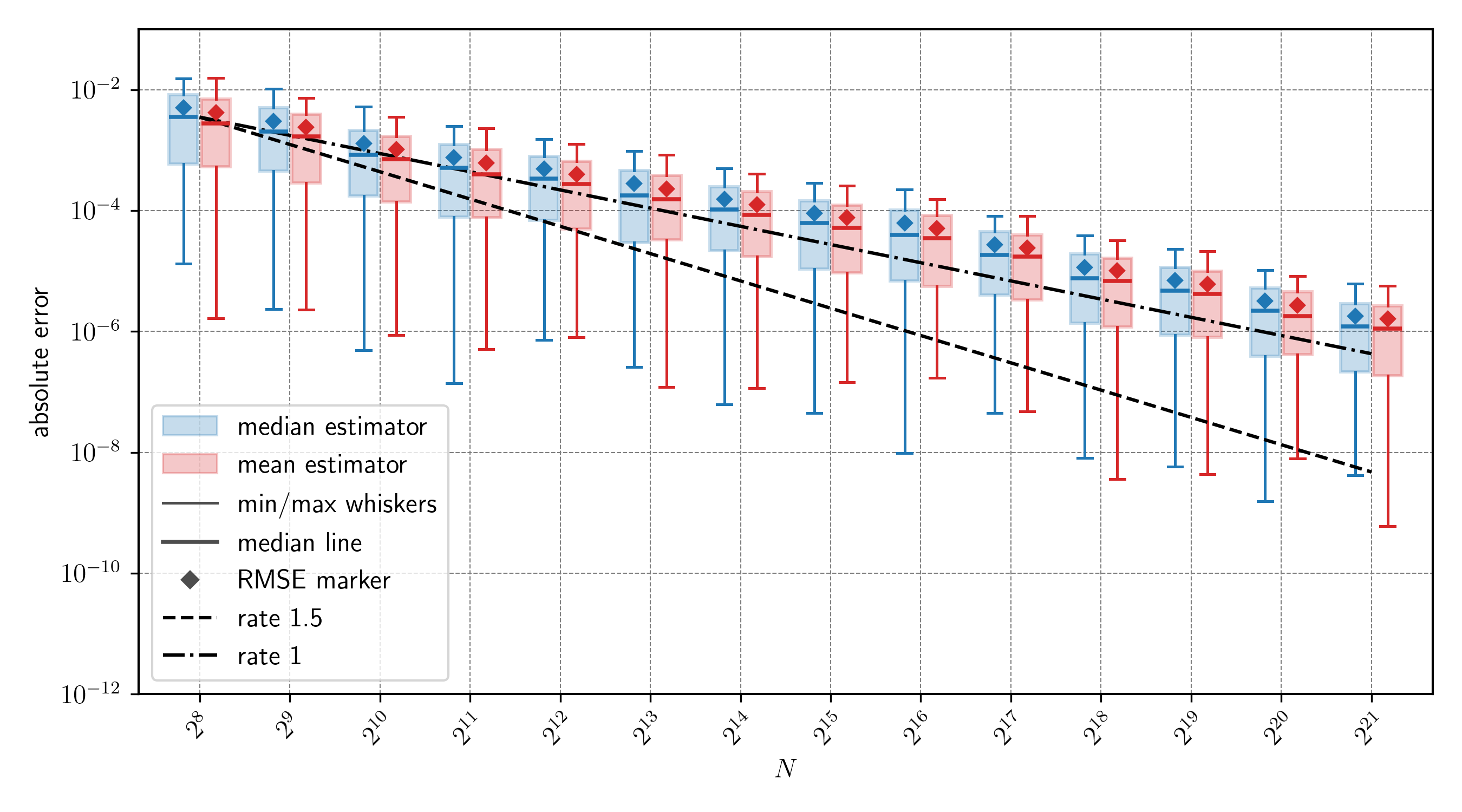}
	}
    \subfloat[$d_{1}=6$ with IS]{%
		\includegraphics[width=0.42\textwidth]{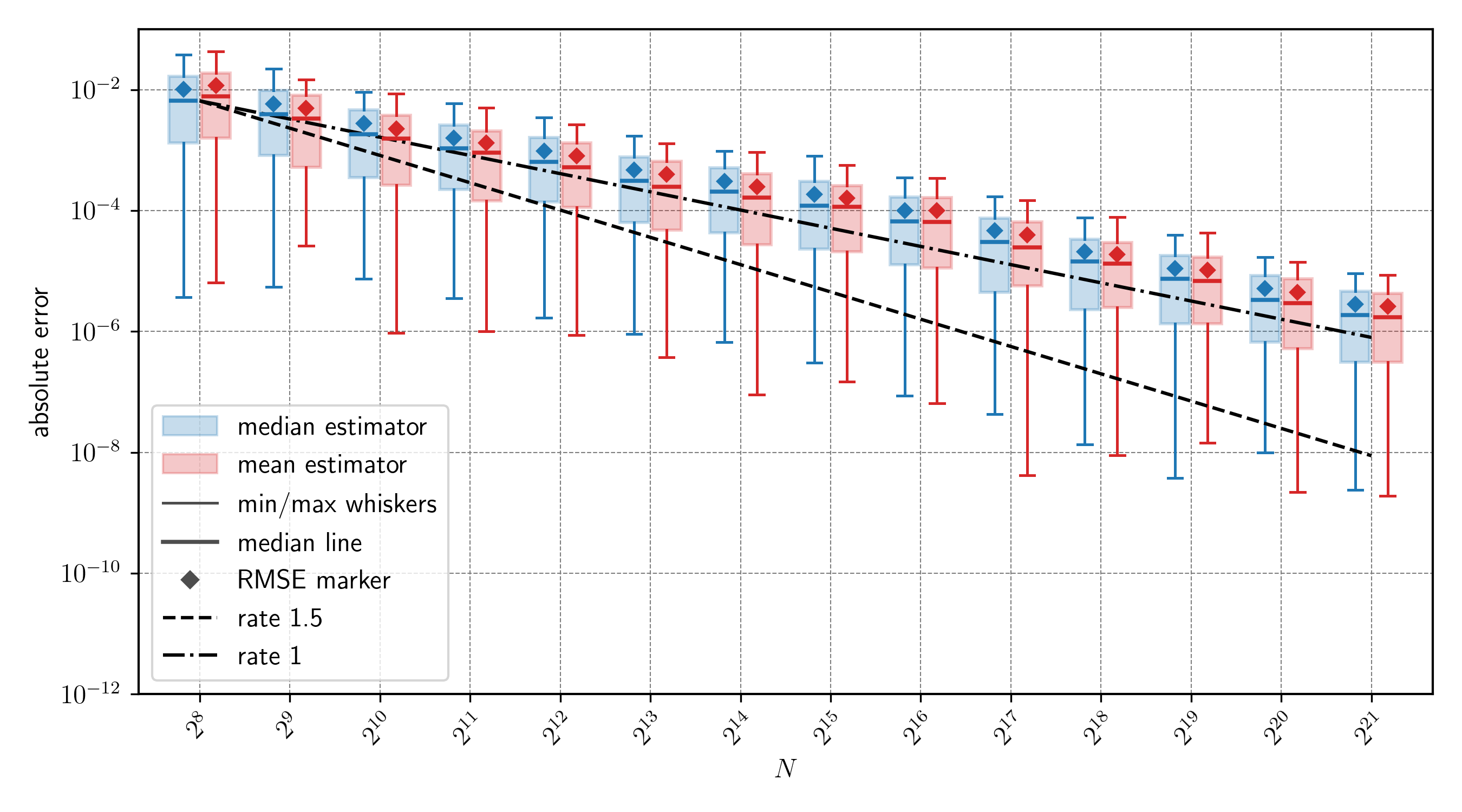}
	}
	\caption{EIG Example: Outer integration error for $J=1000$ replicates of the DLRQMC and the DLmRQMC estimators in various dimensions with and without importance sampling (IS). Both estimators achieve a convergence rate of one without IS (left column). With IS (right column), better rates can be achieved in low dimensions for the sample sizes considered.}
    \label{fig:EIG.outer}
\end{figure}

In Figure~\ref{fig:eig.ccdf.inner} we demonstrate that increasing the number of Sobol' points leads to more extreme outliers for the inner loop. We used $J=2000$ replicates of the inner mRQMC estimators with $R=1$. Fewer large outliers are observed as the dimension $d_2$ increases from one to four.
\begin{figure}[ht]
	\subfloat[$d_2=1$]{%
		\includegraphics[width=0.4\textwidth]{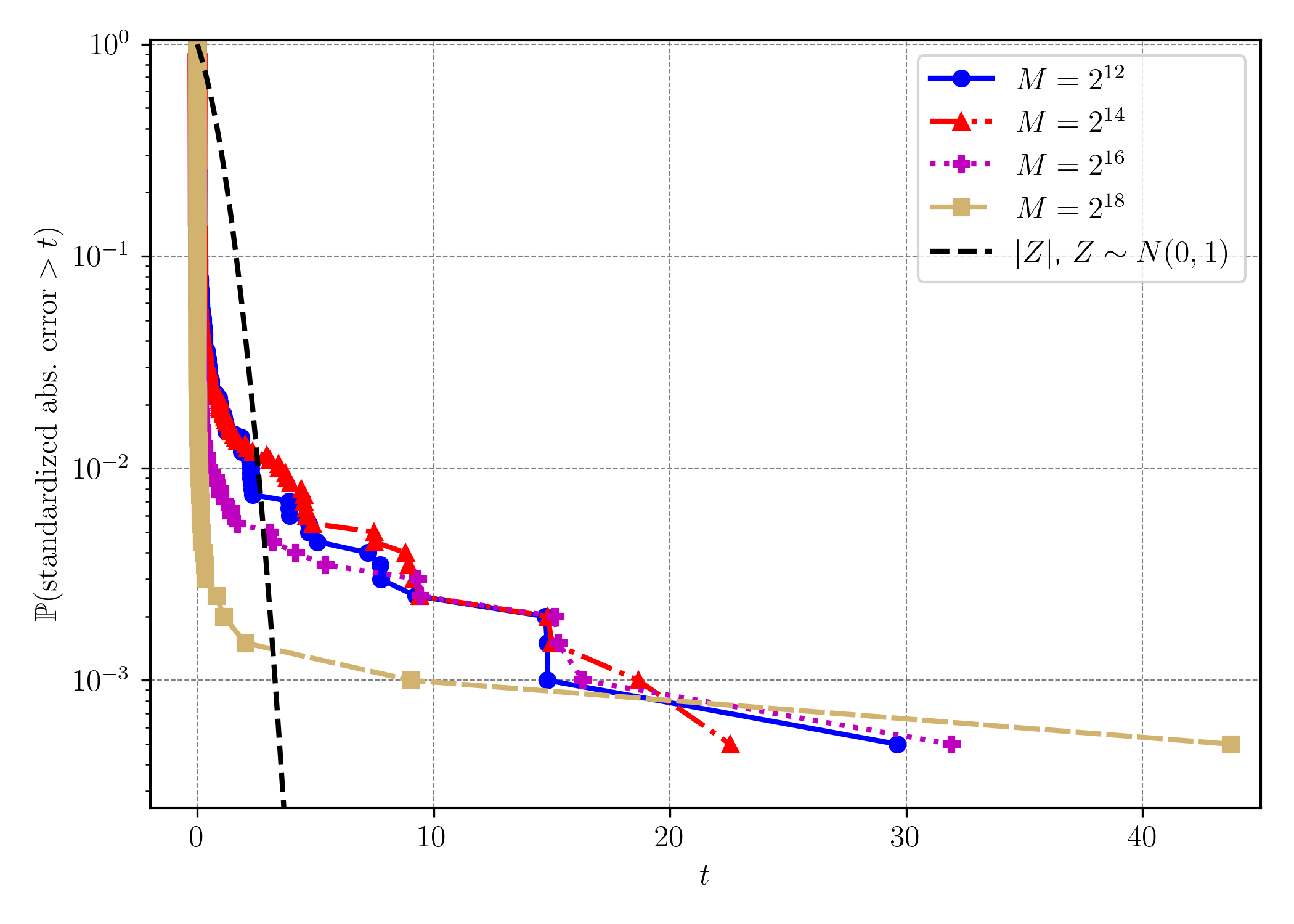}
	}
	\subfloat[$d_2=2$]{%
		\includegraphics[width=0.4\textwidth]{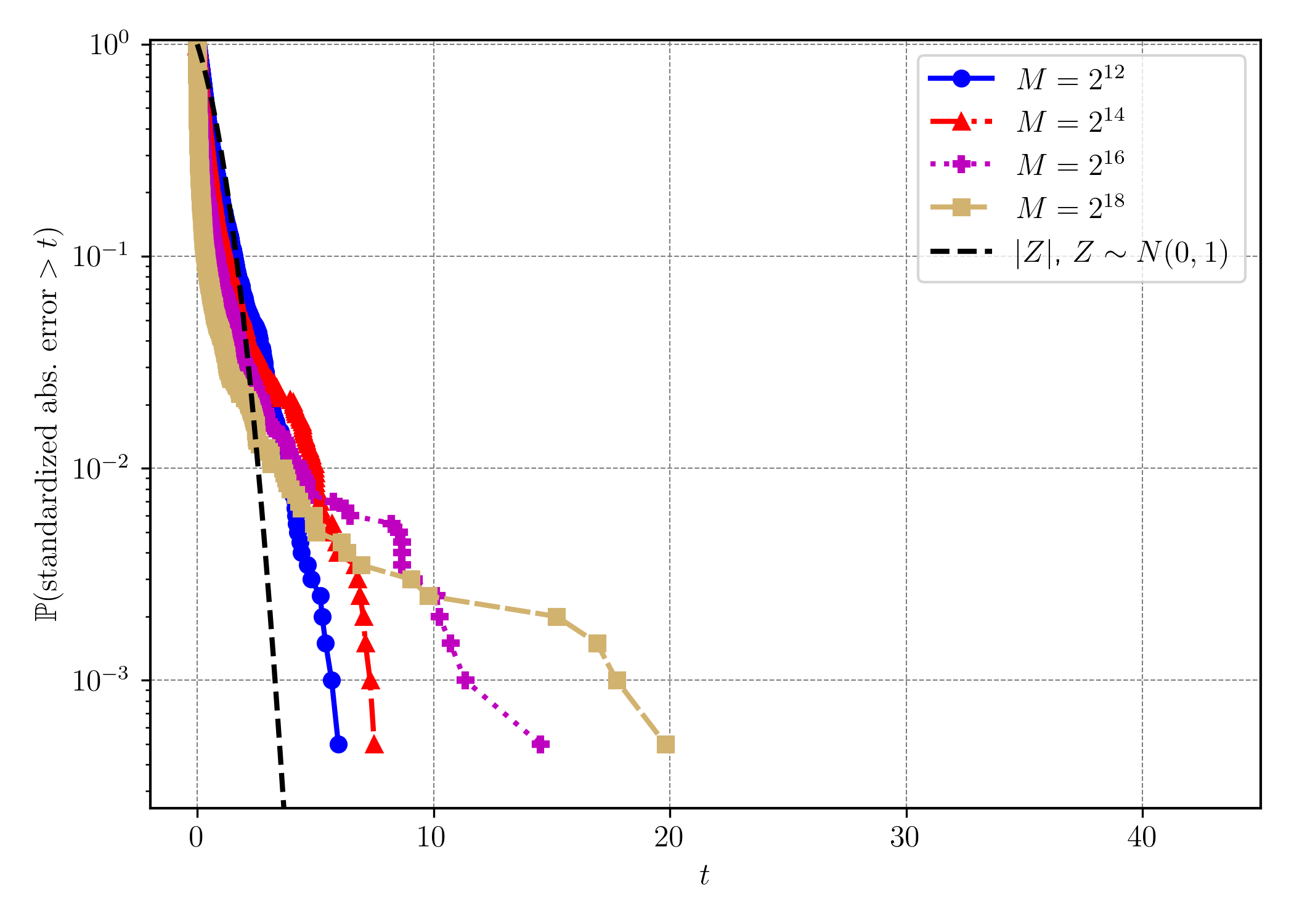}
	}\\
	\subfloat[$d_2=3$]{%
		\includegraphics[width=0.4\textwidth]{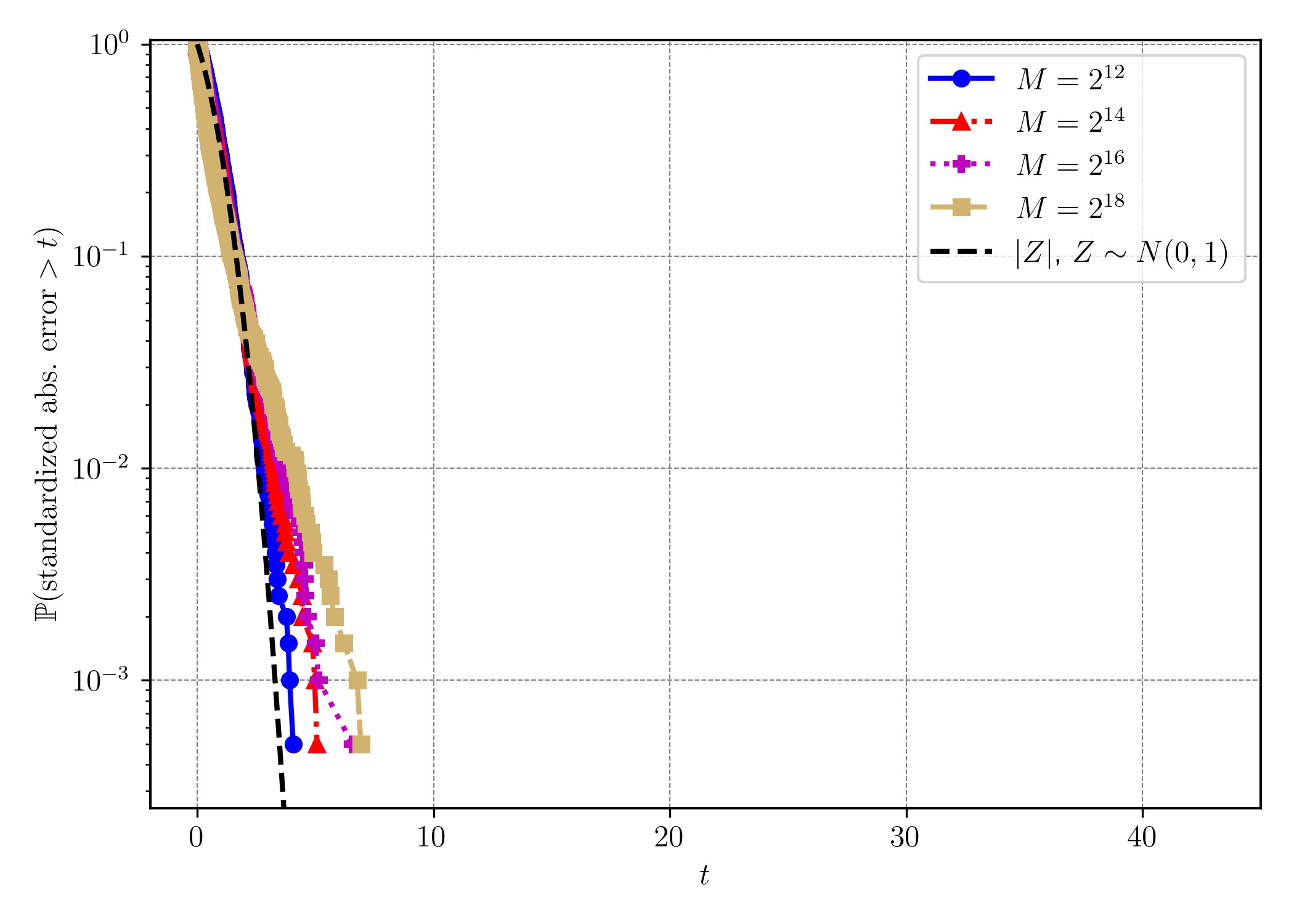}
	}
	\subfloat[$d_2=4$]{%
		\includegraphics[width=0.4\textwidth]{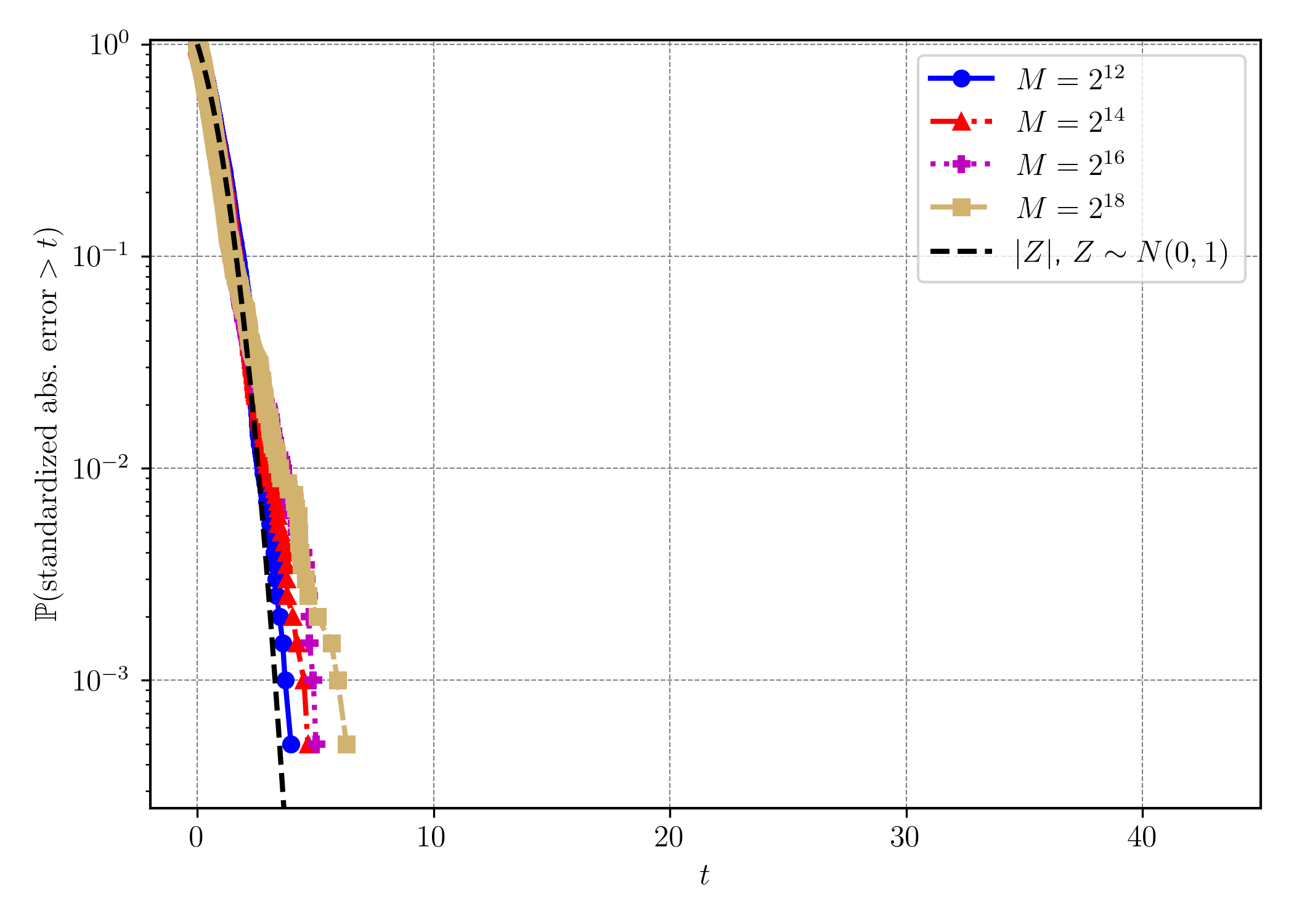}
	}
	\caption{EIG example: CCDF of the standardized absolute errors in dimensions $d_2\in\{1,2,3,4\}$ over $J=2000$ randomizations for the inner loop. The distribution of standardized absolute errors produces increasingly large outliers as the number of Sobol' points in the inner loop $M$ increases. The standardized absolute errors of the standard normal are presented as a reference.}
    \label{fig:eig.ccdf.inner}
\end{figure}
A similar behavior can be observed for the outer error with importance sampling. 

In Figure~\ref{fig:eig.ccdf.outer} we present $J=2000$ replicates of the outer mRQMC estimator with importance sampling for $S=1$ (Panel (B), (D), (F), and (H)). Without importance sampling, no such increase of large outliers is observed for an increasing number of Sobol' points in any dimension $d_1$ between three and six. These results indicate that the median-based estimators will outperform the mean estimators for the inner error and outer error with importance sampling, consistent with our theoretical analysis and error convergence results. Without importance sampling, there are not enough large outliers for the median to remove and hence outperform the mean. As the integrand dimension increases, the mean and median based estimators also display more and more similar behavior, consistent with these findings.

Demonstrating the convergence behavior of the total error of the double-loop estimators in the asymptotic or near-asymptotic regime would require an increasingly large number of inner and outer samples. However, the above results indicate that substantial improvements can be gained by replacing the mean with the median in both the inner and the outer RQMC estimators in the nested setting.

\begin{figure}[ht]
	\subfloat[$d_{1}=3$]{%
		\includegraphics[width=0.4\textwidth]{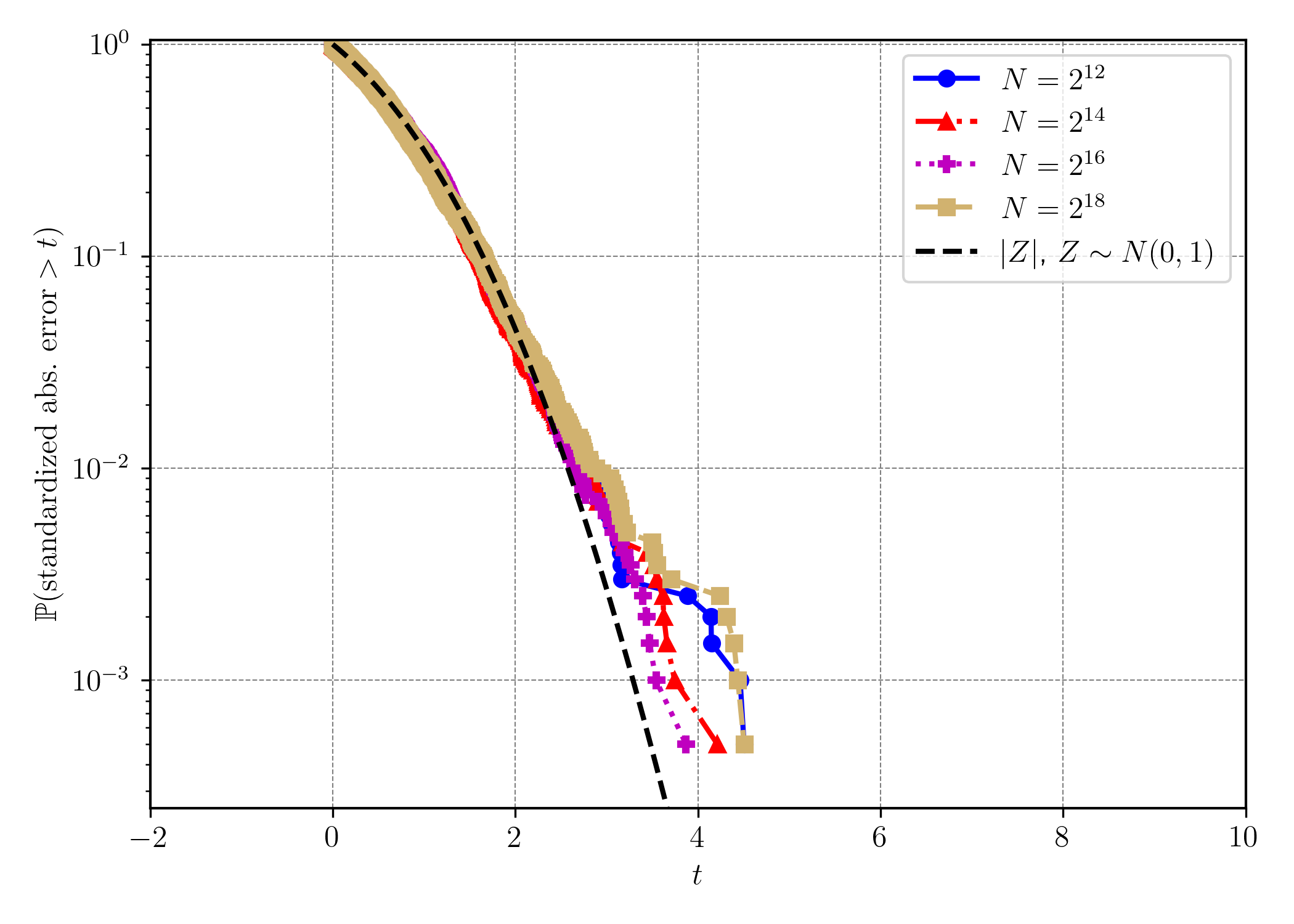}
	}
    	\subfloat[$d_{1}=3$ with IS]{%
		\includegraphics[width=0.4\textwidth]{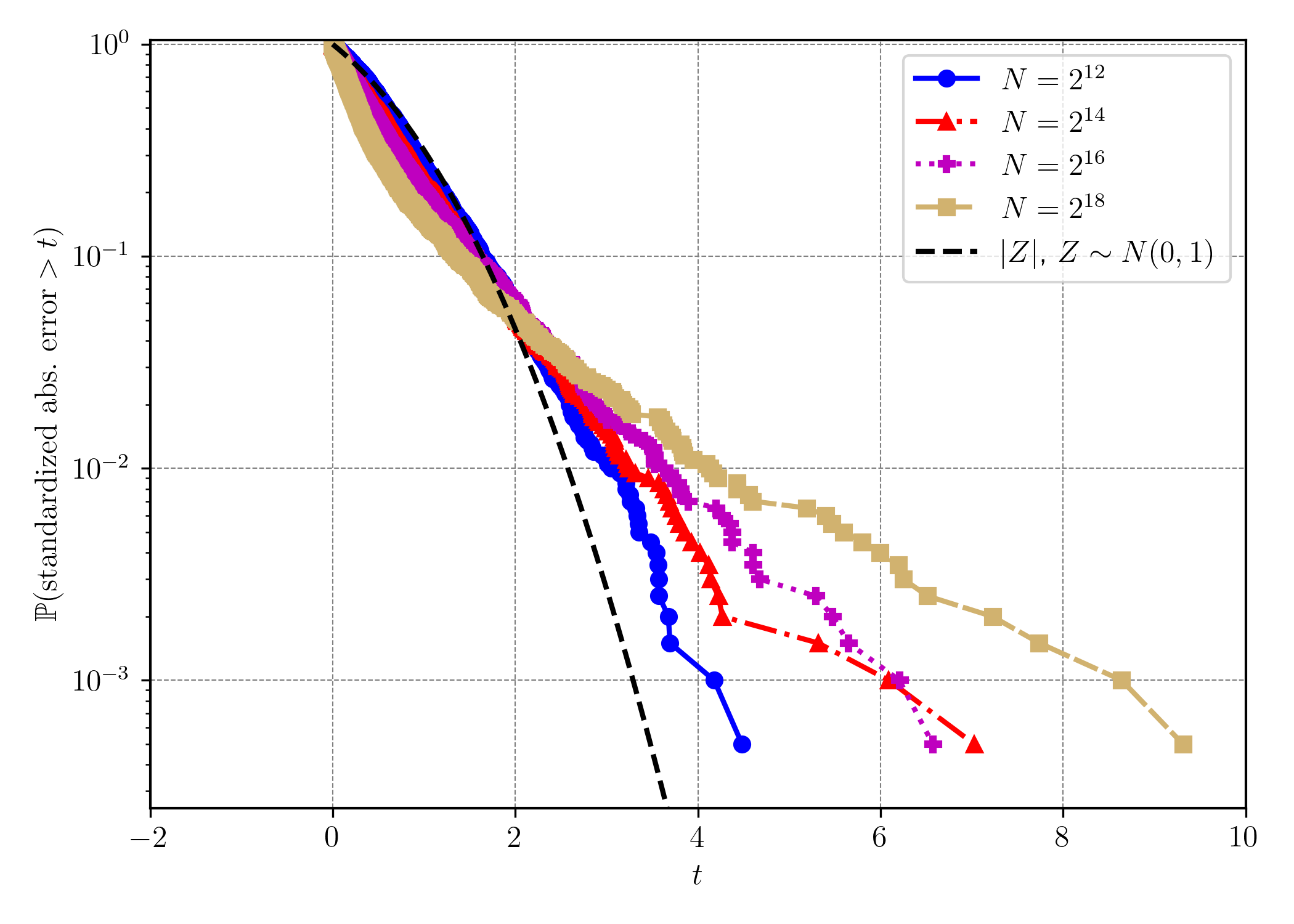}
	}
    \\
    \subfloat[$d_{1}=4$]{%
		\includegraphics[width=0.4\textwidth]{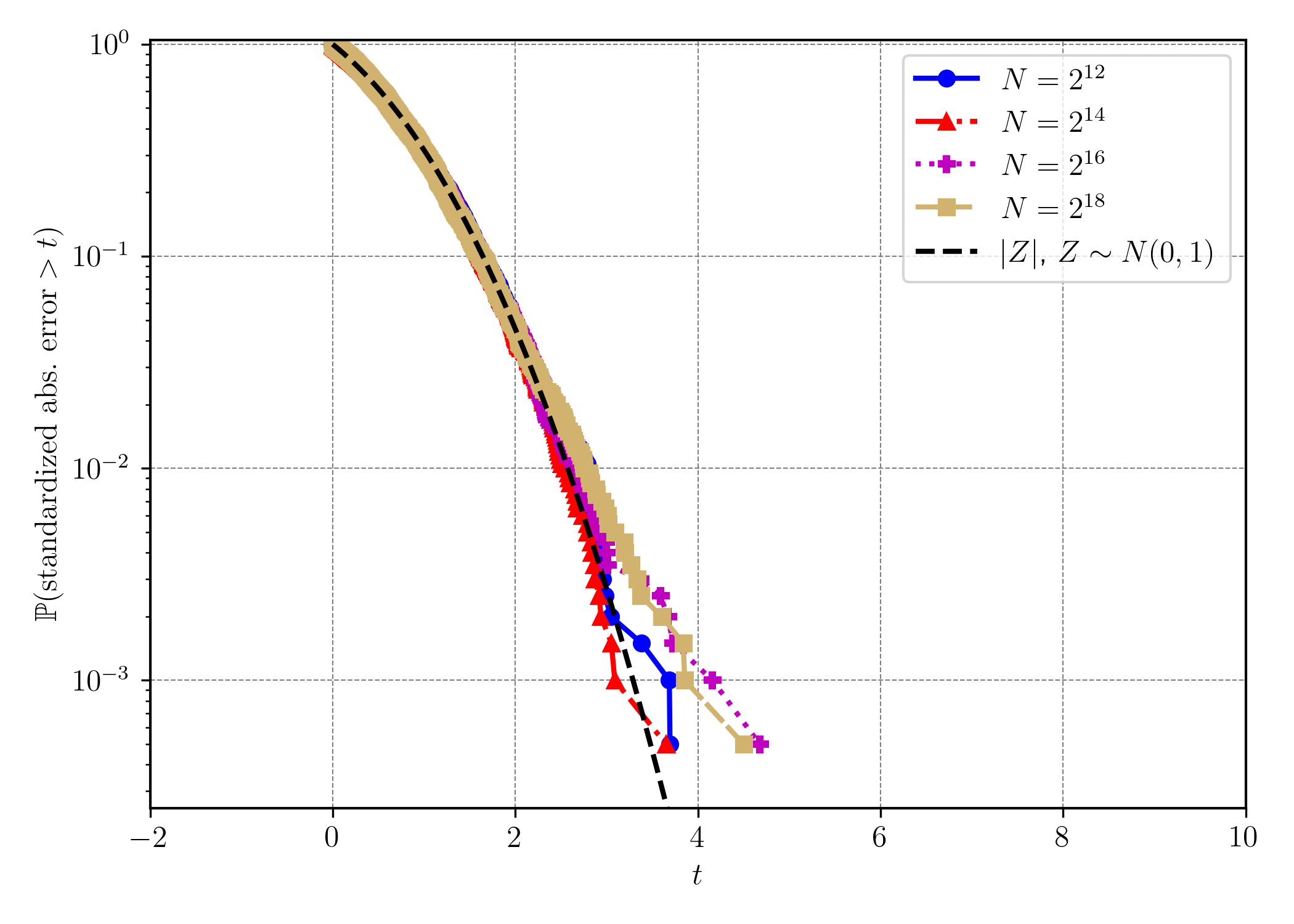}
	}
    \subfloat[$d_{1}=4$ with IS]{%
		\includegraphics[width=0.4\textwidth]{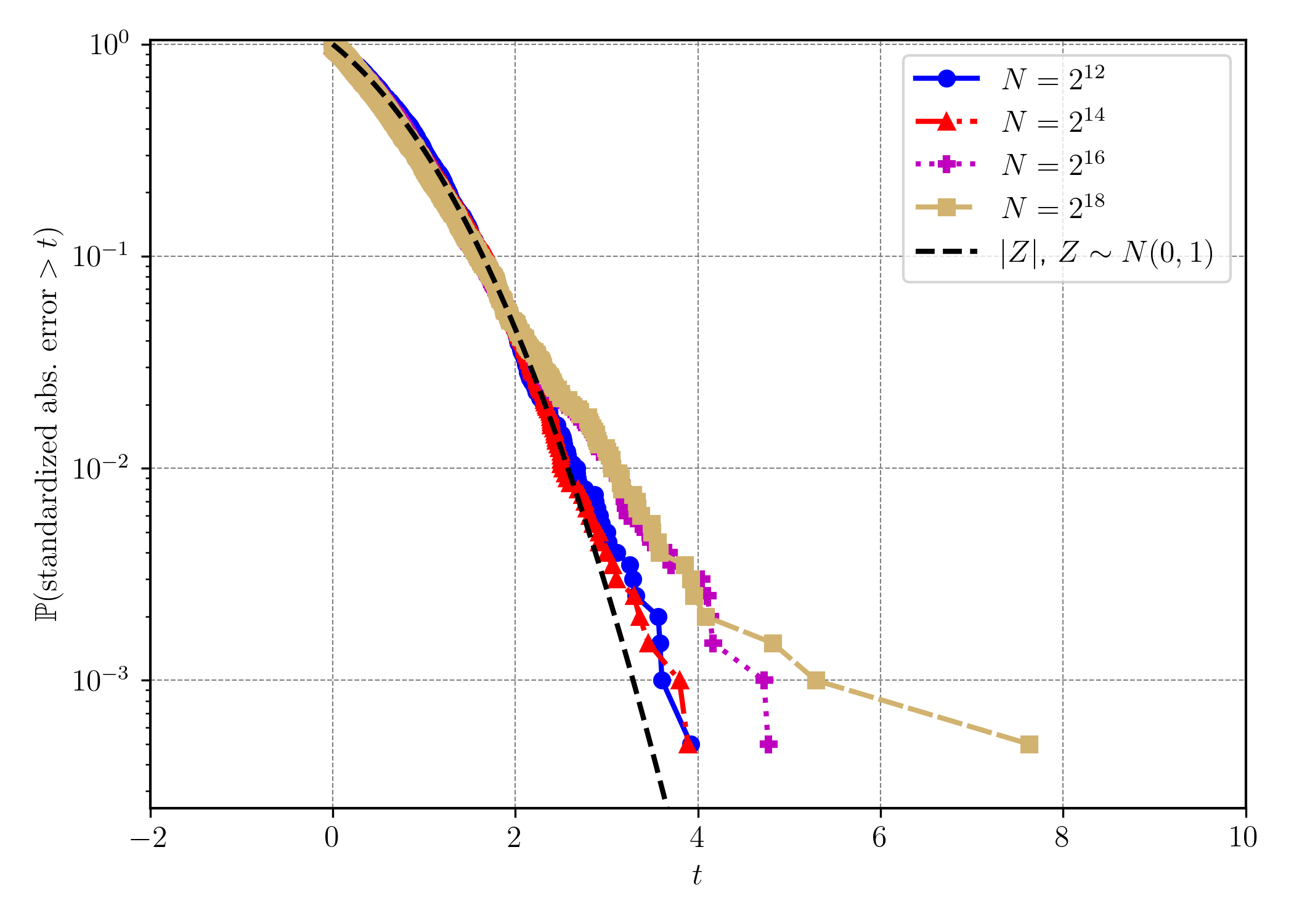}
	}
    \\
	\subfloat[$d_{1}=5$]{%
		\includegraphics[width=0.4\textwidth]{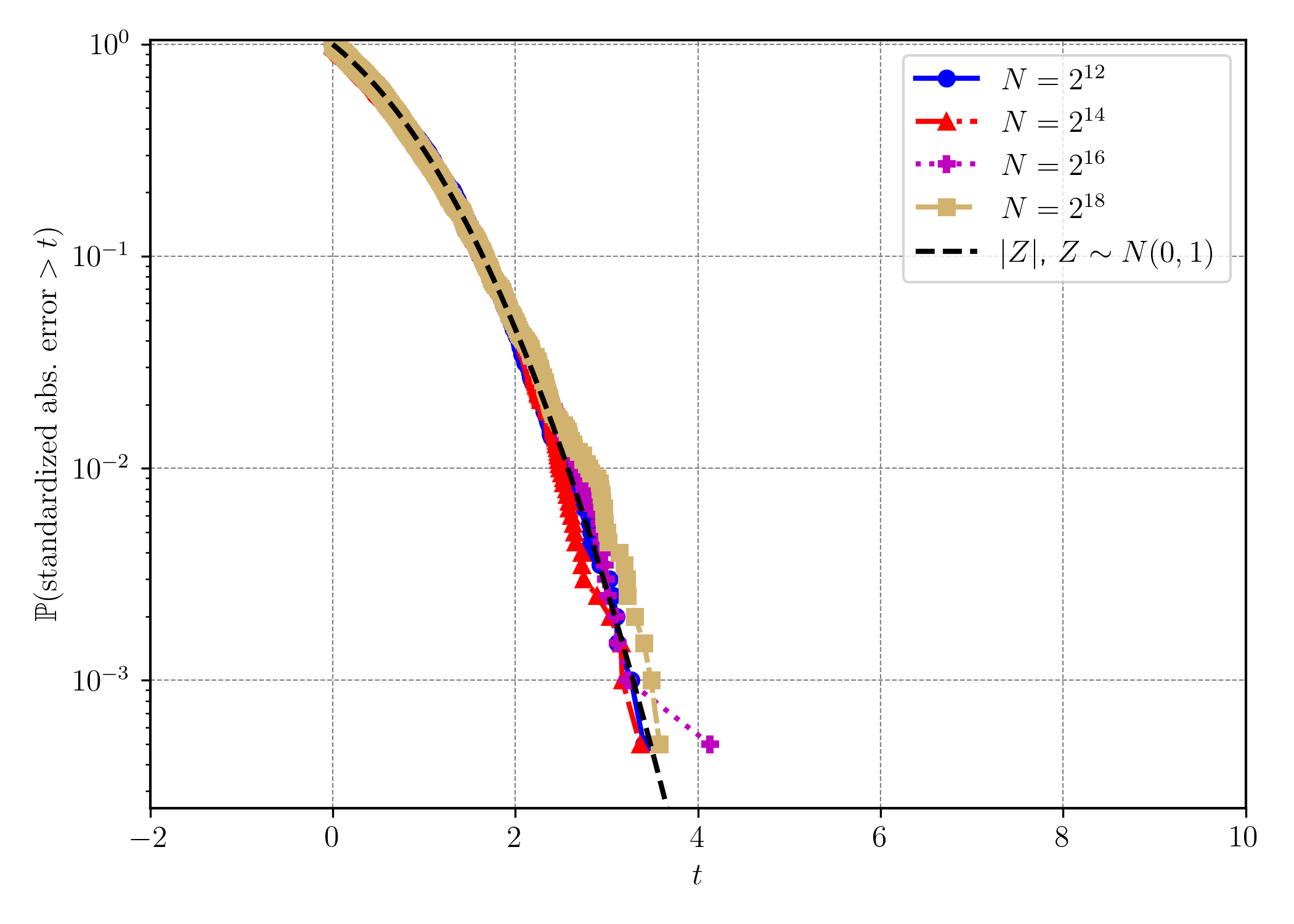}
	}
	\subfloat[$d_{1}=5$ with IS]{%
		\includegraphics[width=0.4\textwidth]{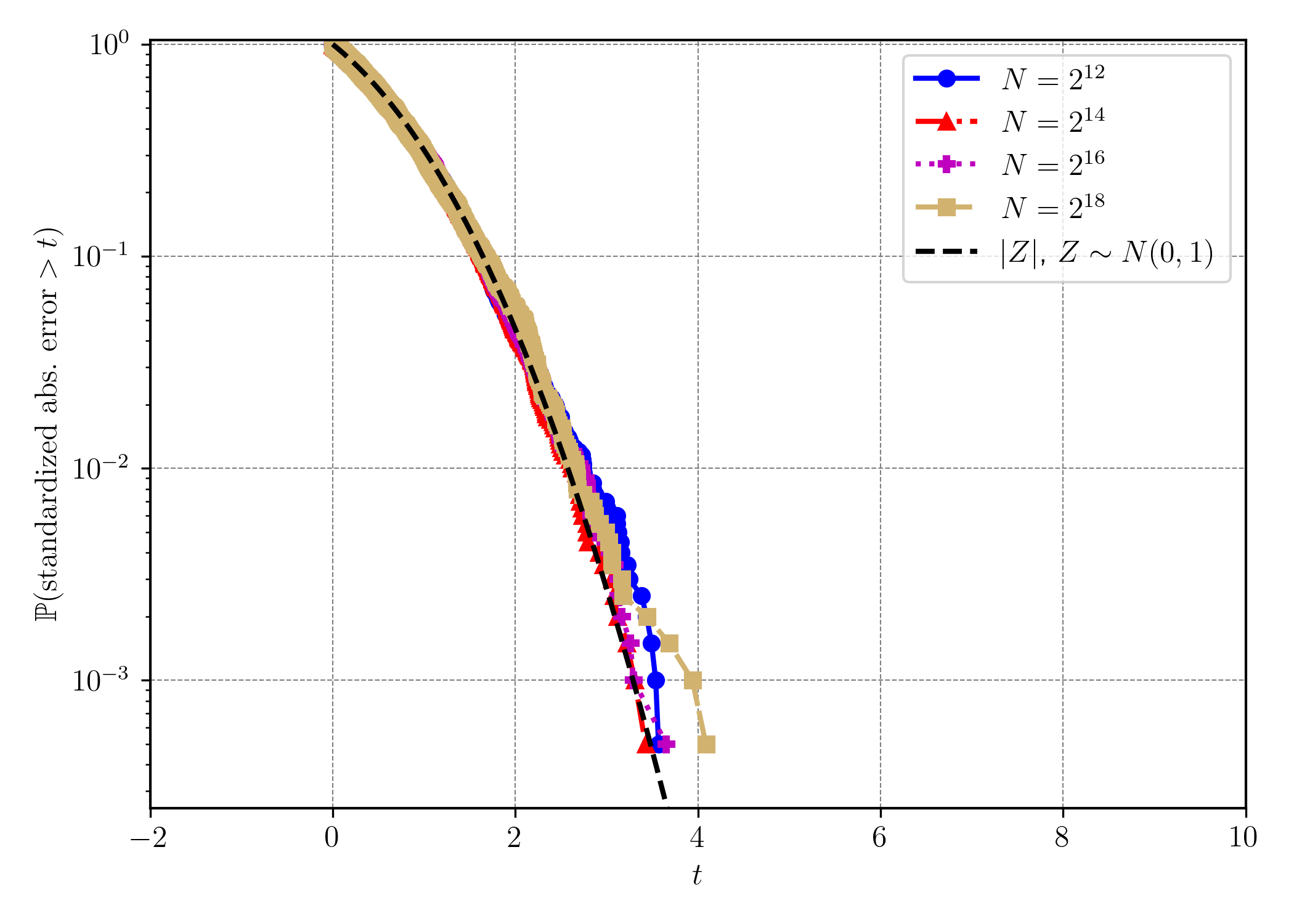}
	}\\
    \subfloat[$d_{1}=6$]{%
		\includegraphics[width=0.4\textwidth]{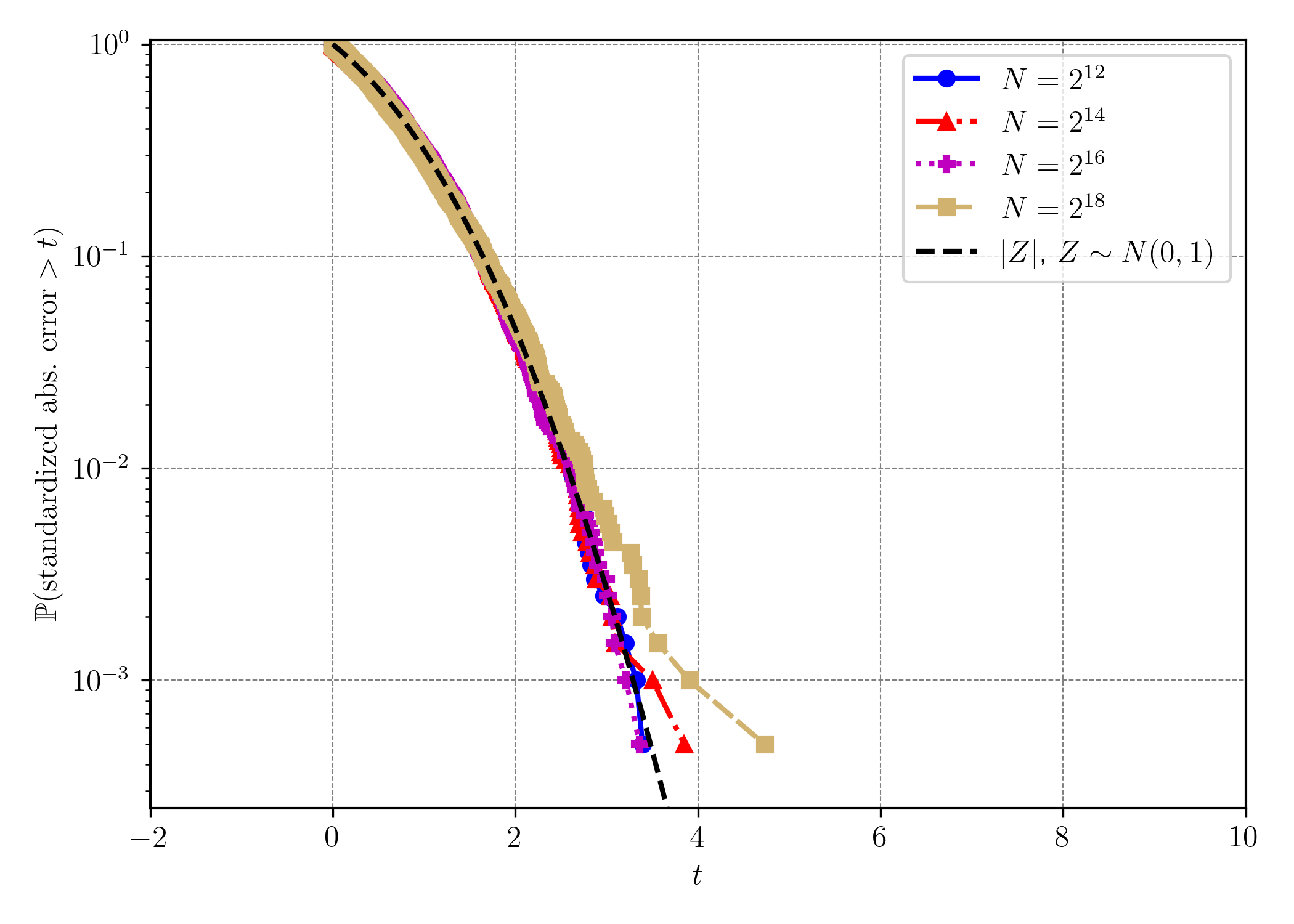}
	}
    \subfloat[$d_{1}=6$ with IS]{%
		\includegraphics[width=0.4\textwidth]{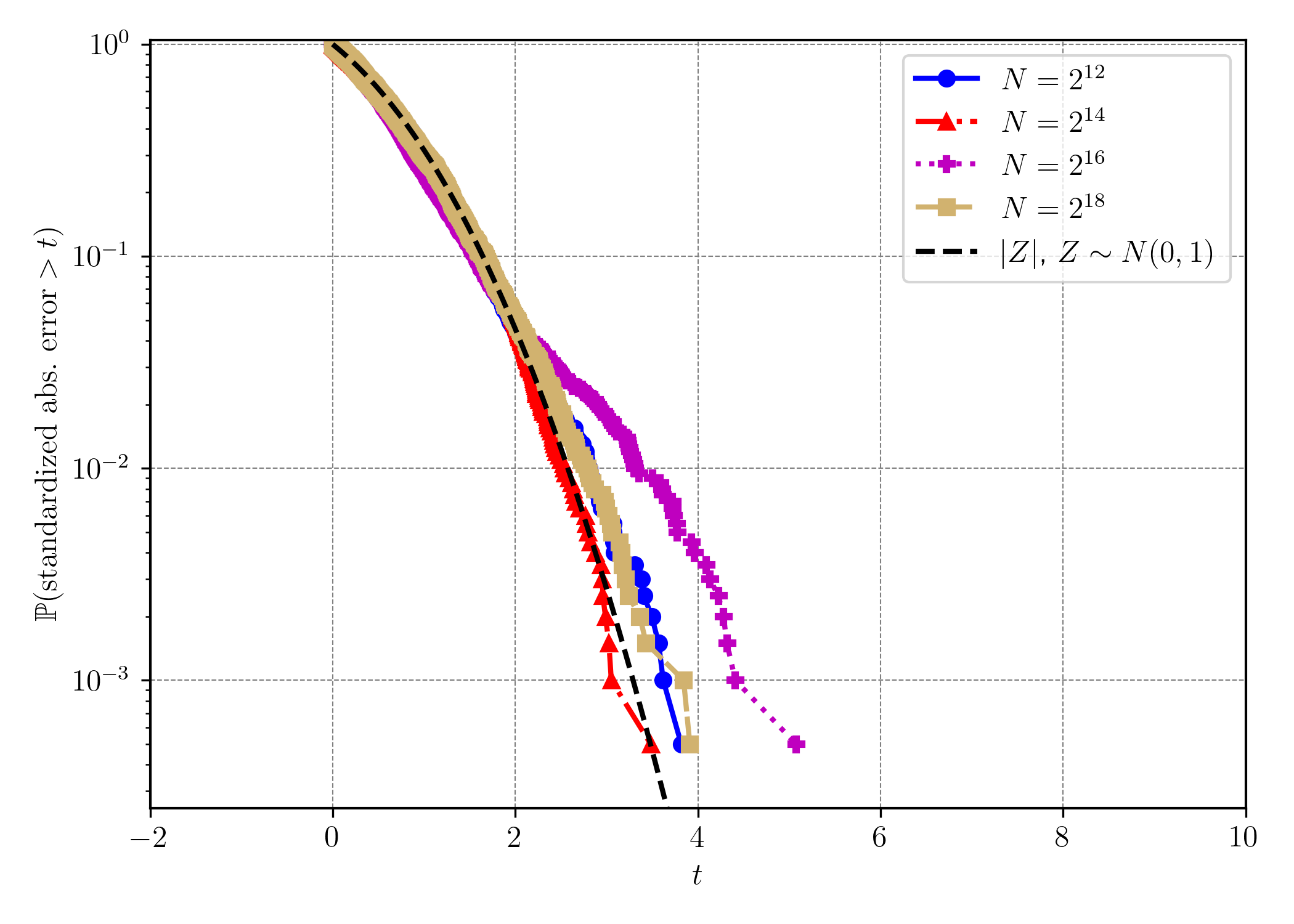}
	}
	\caption{EIG example: CCDF of the standardized absolute errors in dimensions $d_1\in\{3,4,5,6\}$ over $J=2000$ randomizations for the outer loop without importance sampling (IS, left column) and with IS (right column).}
    \label{fig:eig.ccdf.outer}
\end{figure}
\FloatBarrier

\section{Conclusion}
\label{sec:conclusion}
In this paper, we proposed a double-loop median randomized quasi-Monte Carlo estimator suitable for nested integrals and analyze its asymptotic behavior, based on the works of Pan and Owen~\cite{Pan23, Pan24, Pan25}. We demonstrated that the root mean squared error of our estimator displays a super-polynomial convergence rate similar to that of single-loop estimators for non-nested integrals. Then, we proposed an importance sampling scheme for the outer integrand in the expected information gain (EIG) of an experiment to make this integrand suitable for our estimator and compared this method to the standard approach using the inverse cumulative distribution function. Finally, we numerically demonstrated convergence rates of the inner loop of our estimator consistent with super-polynomial convergence for an EIG application. For the outer loop, we observed convergence rates greater than 1.5, that is, superior convergence to that attainable by standard randomized quasi-Monte Carlo methods. These results demonstrate the advantage of taking the sample median over the randomizations, rather than the sample mean, in both the outer and the inner loop.

An important avenue for future research is the extension of the proposed importance sampling scheme to parameters of interest with Gaussian priors and how the presented importance sampling scheme behaves in the small-noise or large-data limit, where a concentration of measure effect often leads to numerical underflow in the inner integral estimation. Moreover, it is worth exploring whether our importance sampling scheme for the outer integrand may be combined with importance sampling for the inner integrand based on the Laplace approximation. The analysis we presented is for general integrands and can be applied in various areas; however, the particular regularity assumptions we made are geared towards the EIG application. Another possible research avenue would be to extend the present analysis to other applications.

\section*{Acknowledgments}
This publication is based upon work supported by the King Abdullah University of Science and Technology (KAUST) Office of Sponsored Research (OSR) under Award No.~OSR-2019-CRG8-4033 and the Alexander von Humboldt Foundation.

\section*{Declarations}
This manuscript was prepared with the assistance of Anthropic Claude, OpenAI ChatGPT and Grammarly as
editing and structural-review tools; all mathematical content, numerical experiments, and final wording decisions are the responsibility of the authors.

\appendix

\subfile{./appendix.tex}

\footnotesize
\printbibliography
\end{document}

%% file: abstract.tex
Nested integrals $\int f\left(\int g(\bs{y},\bs{x})\di{}\bs{x}\right)\di{}\bs{y}$ where $\bs{y}\in[0,1]^{d_1}$ and $\bs{x}\in[0,1]^{d_2}$ frequently arise in Bayesian experimental design, medical decision making, and computational finance. Statistical estimators for nested integrals are more computationally demanding than for standard integrals. Recent studies demonstrate that the median randomized quasi-Monte Carlo (mRQMC) method achieves super-polynomial convergence rates for analytic integrands. We extend mRQMC methods to nested integrations and propose a novel double-loop mRQMC estimator with super-polynomial convergence rate if both the inner and outer integrands are analytic. Established estimators for nested integrals, such as double-loop or multilevel Monte Carlo, are by contrast limited to algebraic convergence rates. However, expected information gain (EIG) applications from experimental design typically do not fulfill this condition due to a lack of regularity in the outer integrand. We address this through a theoretically-grounded importance sampling regularization scheme. Although super-polynomial convergence rates remain achievable for the inner integral, only the algebraic rate $\alpha+3/2-\epsilon$ for any $\epsilon>0$ can be theoretically guaranteed for the outer integral in the EIG, where $\alpha\in\mathbb{N}\cup\{0\}$ denotes the order of the mixed derivatives of the regularized outer integrand. Our approach shows substantial improvements over double-loop randomized quasi-Monte Carlo methods based on the sample mean in moderately high-dimensional settings for a simple polynomial example. Moreover, we demonstrate improved convergence behavior of the median-based estimator for the more challenging setting of EIG estimation.
\\
\textbf{Keywords:} median randomized quasi-Monte Carlo $\cdot$ nested integration $\cdot$ expected information gain $\cdot$ importance sampling
\\
\textbf{AMS subject classifications:}
62F15 
$\cdot$
65C05 
$\cdot$
65D30 
$\cdot$
65D32 

%% file: appendix.tex
\section{Proof of Lemma Maximum difference between two sequences}\label{app:max.diff}
In this section we provide the proof of Lemma~\ref{lem:max.diff}.
\begin{proof}
We introduce
\begin{equation}
    \hat{A}^{(r)}\coloneqq \sigma_{A,\mathrm{asc}}\left(A^{(i_r)}\right), \quad \text{for $1\leq r\leq R$}
\end{equation}
for appropriately chosen indices $i_1,\ldots,i_R$ such that $(\hat{A}^{(r)})_{r=1}^R$ is in ascending order and similarly also
\begin{equation}
    \hat{B}^{(r)}\coloneqq \sigma_{B,\mathrm{asc}}\left(B^{(i_r)}\right), \quad \text{for $1\leq r\leq R$}
\end{equation}
such that $(\hat{B}^{(r)})_{r=1}^R$ is in ascending order for possibly different indices $i_1,\ldots,i_R$. Moreover, the order of the absolute differences in the maximum is not important. Therefore, we keep the sequence $\hat{A}$ as it is, and aim to construct a permutation $\sigma\in\mathcal{S}$ of the sequence $\hat{B}$ that reduces the maximum over the absolute differences compared to $\sigma_{B,\mathrm{asc}}$. If we encounter a contradiction, that means that the permutations that sorts the sequences in ascending order already minimize the maximum over the absolute differences. Let
\begin{equation}
    r^{\ast}\coloneqq \argmax_{r=1,\ldots,R}|\hat{A}^{(r)}-\hat{B}^{(r)}|.
\end{equation}
We only need to consider permutations $\sigma$ for which $\sigma(\hat{B}^{(r^{\ast})})\neq \hat{B}^{(r^{\ast})}$, as the absolute differences would otherwise still include the old maximum and therefore not reduce the new maximum. We therefore consider $\sigma$ such that $\sigma(\hat{B}^{(r^{\ast})})= \hat{B}^{(k)}$, where $k\neq r^{\ast}$. If
\begin{equation}
    |\hat{A}^{(r^{\ast})}-\hat{B}^{(k)}|\geq|\hat{A}^{(r^{\ast})}-\hat{B}^{(r^{\ast})}|,
\end{equation}
then the maximum difference for the permutation $\sigma$ is not reduced compared to the old maximum. We thus assume that
\begin{equation}\label{eq:asu.r.ast}
    |\hat{A}^{(r^{\ast})}-\hat{B}^{(k)}|<|\hat{A}^{(r^{\ast})}-\hat{B}^{(r^{\ast})}|,
\end{equation}
and will now demonstrate that we can always find another absolute difference for which it holds that
\begin{equation}
|\hat{A}^{(i)}-\hat{B}^{(j)}|\geq|\hat{A}^{(r^{\ast})}-\hat{B}^{(r^{\ast})}|
\end{equation}
for some $1\leq i,j\leq R$. Assume for now that $k>r^{\ast}$. It follows from~\eqref{eq:asu.r.ast} that $\hat{A}^{(r^{\ast})}>\hat{B}^{(r^{\ast})}$. This is demonstrated as follows. We immediately see that $\hat{A}^{(r^{\ast})}\neq\hat{B}^{(r^{\ast})}$, otherwise \eqref{eq:asu.r.ast} implies that $|\hat{A}^{(r^{\ast})}-\hat{B}^{(k)}|<0$. Assume that $\hat{A}^{(r^{\ast})}<\hat{B}^{(r^{\ast})}$. We know that $\hat{B}^{(r^{\ast})}\leq \hat{B}^{(k)}$ and thus $\hat{A}^{(r^{\ast})}<\hat{B}^{(r^{\ast})}\leq \hat{B}^{(k)}$, a contradiction to~\eqref{eq:asu.r.ast} and thus $\hat{A}^{(r^{\ast})}>\hat{B}^{(r^{\ast})}$ under the assumption that $k>r^{\ast}$.  We now consider a term $|\hat{A}^{(i)}-\hat{B}^{(j)}|$ such that $i>r^{\ast}$ and $j\leq r^{\ast}$. It follows that 
\begin{equation}
\hat{B}^{(j)}\leq \hat{B}^{(r^{\ast})}<\hat{A}^{(r^{\ast})}\leq \hat{A}^{(i)}.
\end{equation}
From this, it follows that 
\begin{equation}
|\hat{A}^{(i)}-\hat{B}^{(j)}|\geq |\hat{A}^{(r^{\ast})}-\hat{B}^{(r^{\ast})}|
\end{equation}
and we have thus not reduced the maximum over the absolute differences. Such indices $i,j$ always exist, as $r^{\ast}<k\leq R$ by assumption, and thus there exists $i>r^{\ast}$. 

Moreover, among all terms $|\hat{A}^{(r^{\ast}+1)}-\hat{B}^{(j_{r^{\ast}+1})}|,\ldots,|\hat{A}^{(R)}-\hat{B}^{(j_{R})}|$, one of the indices $j_{r^{\ast}+1},\ldots,j_{R}$ must be smaller or equal to $r^{\ast}$, as there are only $R-r^{\ast}$ indices greater than $r^{\ast}$ and the index $k>r^{\ast}$ already appears in the term $|\hat{A}^{(r^{\ast})}-\hat{B}^{(k)}|$. 

By a similar argument, it follows that if $k<r^{\ast}$, then $\hat{A}^{(r^{\ast})}<\hat{B}^{(r^{\ast})}$. Considering a term $|\hat{A}^{(i)}-\hat{B}^{(j)}|$ such that $i<r^{\ast}$ and $j\geq r^{\ast}$, it follows that 
\begin{equation}
\hat{A}^{(i)}\leq \hat{A}^{(r^{\ast})}<\hat{B}^{(r^{\ast})}\leq \hat{B}^{(j)}
\end{equation}
and thus again that
\begin{equation}
|\hat{A}^{(i)}-\hat{B}^{(j)}|\geq |\hat{A}^{(r^{\ast})}-\hat{B}^{(r^{\ast})}|.
\end{equation}
We thus demonstrated that the maximum over the absolute differences of sequences in ascending order cannot be reduced by any permutation.
\end{proof}

\section{Uniform analyticity of \texorpdfstring{$g$}{inner integrand}}\label{app:uniform.analyticity}
Here, we provide the proof of Lemma~\ref{Lem:asu.2}.
\begin{proof}
 We fix $\bs{y}\in[0,1]^{d_1}$ and write
 \begin{equation}
     g_{\bs{y}}(\bs{x})\coloneqq g(\bs{y},\bs{x}),
 \end{equation}
 and similarly also
  \begin{equation}
     \Lambda_{\bs{y}}(\bs{x})\coloneqq \Lambda(\bs{y},\bs{x}),
 \end{equation}
 for $\Lambda$ as in~\eqref{def:Lambda}. Let $\lVert\bs{\alpha}\rVert_1=n$. It follows that
\begin{equation}
    \frac{\partial^{\lVert\bs{\alpha}\rVert_1}g_{\bs{y}}(\bs{x})}{\partial x_1^{\alpha_1}\cdots\partial x_{d_2}^{\alpha_{d_2}}}=\frac{\partial^{n}g_{\bs{y}}(\bs{x})}{\partial x_{i_1}\cdots\partial x_{i_n}}
\end{equation}
    for appropriately chosen indices $i_1,\ldots,i_n\in\{1,\ldots,d_2\}$ with repetition. Next, we apply Fa\`{a} di Bruno's formula to obtain
    \begin{equation}
        \frac{\partial^{n}}{\partial x_{i_1}\cdots\partial x_{i_n}}g_{\bs{y}}(\bs{x})=\sum_{\pi}(-1)^{|\pi|}e^{-\Lambda_{\bs{y}}}\prod_{b\in\pi}\frac{\partial^{|b|}\Lambda_{\bs{y}}(\bs{x})}{\prod_{j\in b}\partial x_{i_j}},
    \end{equation}
    where $\pi$ runs through the set of all partitions of $\{1,\ldots,n\}$ and $b$ runs through the list of all the blocks of partition $\pi$. It immediately follows that
        \begin{equation}\label{eq:Faa.di.Bruno.likelihood}
        \left|\frac{\partial^{n}}{\partial x_{i_1}\cdots\partial x_{i_n}}g_{\bs{y}}(\bs{x})\right|\leq\sum_{\pi}e^{-\Lambda_{\bs{y}}}\prod_{b\in\pi}\left|\frac{\partial^{|b|}\Lambda_{\bs{y}}(\bs{x})}{\prod_{j\in b}\partial x_{i_j}}\right|.
    \end{equation}
    To demonstrate the bound on the partial derivatives of $\Lambda_{\bs{y}}$, let $\bs{\beta}\in\mathbb{N}_{0}^{d_2}$ such that $\left\lVert \bs{\beta}\right\rVert_1=|b|$. Moreover, let
    \begin{equation}\label{def:v}
        \bs{v}_{\bs{y}}(\bs{x})\coloneqq \bs{\Sigma}_{\bs{\varepsilon}}^{-\frac{1}{2}}\left(\bs{G}(\bs{y}_1)-\bs{G}(\bs{x})\right)+\bs{\Phi}^{-1}(\bs{y}_2)
    \end{equation}
    so that
    \begin{align}
        \Lambda_{\bs{y}}(\bs{x})\coloneqq \frac{1}{2}\left\lVert \bs{v}_{\bs{y}}(\bs{x})\right\rVert^2=\frac{1}{2}\left\langle \bs{v}_{\bs{y}}(\bs{x}),\bs{v}_{\bs{y}}(\bs{x}) \right\rangle.
    \end{align}
    It follows from the multivariate general Leibniz rule that
    \begin{equation}
        \frac{\partial^{\left\lVert\bs{\beta} \right\rVert_1}\Lambda_{\bs{y}}(\bs{x})}{\partial x_1^{\beta_1}\cdots\partial x_{d_2}^{\beta_{d_2}}}=\frac{1}{2}\sum_{\bs{\nu}\leq\bs{\beta}}\binom{\bs{\beta}}{\bs{\nu}}\left\langle \frac{\partial^{\left\lVert\bs{\nu}\right\rVert_1}\bs{v}_{\bs{y}}(\bs{x})}{\partial x_1^{\nu_1}\cdots\partial x_{d_2}^{\nu_{d_2}}},\frac{\partial^{\left\lVert\bs{\beta}-\bs{\nu}\right\rVert_1}\bs{v}_{\bs{y}}(\bs{x})}{\partial x_1^{\beta_1-\nu_1}\cdots\partial x_{d_2}^{\beta_{d_2}-\nu_{d_2}}} \right\rangle,
    \end{equation}
    which we write in condensed notation as follows:
        \begin{equation}\label{eq:general.Leibniz.rule}
        \partial^{\bs{\beta}}\Lambda_{\bs{y}}=\frac{1}{2}\sum_{\bs{\nu}\leq\bs{\beta}}\binom{\bs{\beta}}{\bs{\nu}}\left\langle \partial^{\bs{\nu}}\bs{v}_{\bs{y}},\partial^{\bs{\beta}-\bs{\nu}}\bs{v}_{\bs{y}} \right\rangle,
    \end{equation}
understanding that all derivatives are with respect to the variable $\bs{x}\in[0,1]^{d_2}$. Moreover, it follows from~\eqref{def:v} that
    \begin{equation}
        \partial^{\bs{\beta}}\bs{v}_{\bs{y}}=-\bs{\Sigma}_{\bs{\varepsilon}}^{-\frac{1}{2}}\partial^{\bs{\beta}}\bs{G}
    \end{equation}
   for all $|\bs{\beta}|\geq 1$ and thus it follows from Assumption~\ref{asu:model} that there exist constants $A_{\bs{G}}<\infty$ and $\eta_{\bs{G}}>0$ such that
\begin{equation}\label{eq:bound.asu.5}
   \left\lVert\partial^{\bs{\beta}}\bs{v}_{\bs{y}}\right\rVert\leq\left\lVert \bs{\Sigma}_{\bs{\varepsilon}}^{-\frac{1}{2}} \right\rVert \frac{A_{\bs{G}}|b|!}{\eta_{\bs{G}}^{|b|}}
\end{equation}
for all $\bs{x}\in[0,1]^{d_2}$, $\bs{y}\in[0,1]^{d_1}$, and $\bs{\beta}\in\mathbb{N}_0^{d_2}$ such that $|\bs{\beta}|\geq 1$. Thus, only the terms in~\eqref{eq:general.Leibniz.rule} where $\bs{\nu}=\bs{0}$ and where $\bs{\nu}=\bs{\beta}$ depend on $\bs{y}$. For those cases, it follows that $\binom{\bs{\beta}}{\bs{\nu}}=1$ and thus, by symmetry, together they contribute a factor $\langle \bs{v}_{\bs{y}},\partial^{\bs{\beta}}\bs{v}_{\bs{y}}\rangle$. It follows from the Cauchy--Schwarz inequality that
\begin{align}\label{eq:bound.extreme}
    \left|\left\langle \bs{v}_{\bs{y}},\partial^{\bs{\beta}}\bs{v}_{\bs{y}}\right\rangle\right|{}&\leq \left\lVert \bs{v}_{\bs{y}} \right\rVert \left\lVert \partial^{\bs{\beta}}\bs{v}_{\bs{y}} \right\rVert\nonumber\\
    {}&\leq \left\lVert \bs{v}_{\bs{y}} \right\rVert \left\lVert \bs{\Sigma}_{\bs{\varepsilon}}^{-\frac{1}{2}} \right\rVert\frac{ A_{\bs{G}}|b|!}{\eta_{\bs{G}}^{|b|}}.
\end{align}
For the remaining terms in~\eqref{eq:general.Leibniz.rule}, it follows again from the Cauchy--Schwarz inequality and the bound in~\eqref{eq:bound.asu.5} that
\begin{equation}\label{eq:bound.middle}
    \left|\left\langle \partial^{\bs{\nu}}\bs{v}_{\bs{y}},\partial^{\bs{\beta}-\bs{\nu}}\bs{v}_{\bs{y}}\right\rangle\right|\leq \left\lVert \bs{\Sigma}_{\bs{\varepsilon}}^{-\frac{1}{2}} \right\rVert^2\frac{ A_{\bs{G}}^2\lVert\bs{\nu}\rVert_1!\lVert\bs{\beta}-\bs{\nu}\rVert_1!}{\eta_{\bs{G}}^{|b|}},
\end{equation}
where $|b|=\lVert\bs{\nu}\rVert_1+\lVert\bs{\beta}-\bs{\nu}\rVert_1$. We now combine the bounds in~\eqref{eq:bound.extreme} and~\eqref{eq:bound.middle} to obtain that
\begin{align}\label{eq:bound.gen.Leibniz}
    {}&\left|\frac{1}{2}\sum_{\bs{\nu}\leq\bs{\beta}}\binom{\bs{\beta}}{\bs{\nu}}\left\langle \partial^{\bs{\nu}}\bs{v}_{\bs{y}},\partial^{\bs{\beta}-\bs{\nu}}\bs{v}_{\bs{y}}\right\rangle\right|\nonumber\\
    {}&\leq\left|\left\langle \bs{v}_{\bs{y}},\partial^{\bs{\beta}}\bs{v}_{\bs{y}}\right\rangle\right|+\frac{1}{2}\sum_{\bs{0}<\bs{\nu}<\bs{\beta}}\binom{\bs{\beta}}{\bs{\nu}}\left|\left\langle \partial^{\bs{\nu}}\bs{v}_{\bs{y}},\partial^{\bs{\beta}-\bs{\nu}}\bs{v}_{\bs{y}}\right\rangle\right|\nonumber\\
    {}&\leq \left\lVert \bs{v}_{\bs{y}} \right\rVert \left\lVert \bs{\Sigma}_{\bs{\varepsilon}}^{-\frac{1}{2}} \right\rVert\frac{ A_{\bs{G}}|b|!}{\eta_{\bs{G}}^{|b|}}+\frac{1}{2}\left\lVert \bs{\Sigma}_{\bs{\varepsilon}}^{-\frac{1}{2}} \right\rVert^2\frac{ A_{\bs{G}}^2}{\eta_{\bs{G}}^{|b|}}\sum_{\bs{0}<\bs{\nu}<\bs{\beta}}\binom{\bs{\beta}}{\bs{\nu}}\lVert\bs{\nu}\rVert_1!\lVert\bs{\beta}-\bs{\nu}\rVert_1!.
\end{align}
For convenience, we simply add the two terms $\bs{\nu}=\bs{0}$ and $\bs{\nu}=\bs{\beta}$ to the sum in the above expression to obtain that
\begin{align}
    \sum_{\bs{0}<\bs{\nu}<\bs{\beta}}\binom{\bs{\beta}}{\bs{\nu}}\lVert\bs{\nu}\rVert_1!\lVert\bs{\beta}-\bs{\nu}\rVert_1!{}&\leq \sum_{\bs{\nu}\leq\bs{\beta}}\binom{\bs{\beta}}{\bs{\nu}}\lVert\bs{\nu}\rVert_1!\lVert\bs{\beta}-\bs{\nu}\rVert_1!\nonumber\\
    {}&=\sum_{j=0}^{|b|}\left(\sum_{\left\lVert\bs{\nu}\right\rVert_1=j}\binom{\bs{\beta}}{\bs{\nu}}\lVert\bs{\nu}\rVert_1!\lVert\bs{\beta}-\bs{\nu}\rVert_1!\right)\nonumber\\
    {}&=\sum_{j=0}^{|b|}\left(j!(|b|-j)!\sum_{\left\lVert\bs{\nu}\right\rVert_1=j}\binom{\bs{\beta}}{\bs{\nu}}\right).
\end{align}
Now, we use Vandermonde's identity to find that
\begin{align}
    \sum_{j=0}^{|b|}\left(j!(|b|-j)!\sum_{\left\lVert\bs{\nu}\right\rVert_1=j}\binom{\bs{\beta}}{\bs{\nu}}\right){}&=\sum_{j=0}^{|b|}\left(j!(|b|-j)!\binom{|b|}{j}\right)\nonumber\\
    {}&=\sum_{j=0}^{|b|}|b|!\nonumber\\
    {}&=(|b|+1)|b|!\nonumber\\
    {}&\leq 2^{|b|}|b|!.
\end{align}
Substituting into~\eqref{eq:bound.gen.Leibniz} yields
\begin{align}\label{eq:comb.bound}
    \left|\frac{1}{2}\sum_{\bs{\nu}\leq\bs{\beta}}\binom{\bs{\beta}}{\bs{\nu}}\left\langle \partial^{\bs{\nu}}\bs{v}_{\bs{y}},\partial^{\bs{\beta}-\bs{\nu}}\bs{v}_{\bs{y}}\right\rangle\right|{}&\leq\left\lVert \bs{v}_{\bs{y}} \right\rVert \left\lVert \bs{\Sigma}_{\bs{\varepsilon}}^{-\frac{1}{2}} \right\rVert\frac{ A_{\bs{G}}|b|!}{\eta_{\bs{G}}^{|b|}}+\frac{1}{2}\left\lVert \bs{\Sigma}_{\bs{\varepsilon}}^{-\frac{1}{2}} \right\rVert^2\frac{ A_{\bs{G}}^2}{\eta_{\bs{G}}^{|b|}}2^{|b|}|b|!\nonumber\\
    {}&=\left(\left\lVert \bs{v}_{\bs{y}} \right\rVert K+2^{|b|-1}K^2\right)\frac{|b|!}{\eta_{\bs{G}}^{|b|}},
\end{align}
where
\begin{equation}
    K\coloneqq \left\lVert \bs{\Sigma}_{\bs{\varepsilon}}^{-\frac{1}{2}} \right\rVert A_{\bs{G}}.
\end{equation}
We simplify the expression in~\eqref{eq:comb.bound} further as follows:
\begin{align}\label{eq:bound.tilde}
    \left(\left\lVert \bs{v}_{\bs{y}} \right\rVert K+2^{|b|-1}K^2\right)\frac{|b|!}{\eta_{\bs{G}}^{|b|}}{}&\leq\left(\left\lVert \bs{v}_{\bs{y}} \right\rVert K+K^2\right)\frac{2^{|b|}|b|!}{\eta_{\bs{G}}^{|b|}}\nonumber\\
    {}&\leq\left(1+\left\lVert \bs{v}_{\bs{y}} \right\rVert \right)\left(K+K^2\right)\frac{2^{|b|}|b|!}{\eta_{\bs{G}}^{|b|}}\nonumber\\
    {}&=\left(1+\left\lVert \bs{v}_{\bs{y}} \right\rVert \right)\tilde{K}\frac{|b|!}{\tilde{\eta}_{\bs{G}}^{|b|}},
\end{align}
where $\tilde{K}\coloneqq K+K^2$ and $\tilde{\eta}_{\bs{G}}\coloneqq \eta_{\bs{G}}/2$. We substitute~\eqref{eq:bound.tilde} into~\eqref{eq:comb.bound},~\eqref{eq:comb.bound} into~\eqref{eq:bound.gen.Leibniz},~\eqref{eq:bound.gen.Leibniz} into~\eqref{eq:general.Leibniz.rule}, and finally~\eqref{eq:general.Leibniz.rule} into~\eqref{eq:Faa.di.Bruno.likelihood} to obtain that
\begin{align}\label{eq:Faa.di.Bruno.bound}
        \left|\frac{\partial^{n}}{\partial x_{i_1}\cdots\partial x_{i_n}}g_{\bs{y}}(\bs{x})\right|{}&\leq\sum_{\pi}e^{-\Lambda_{\bs{y}}}\prod_{b\in\pi}\left(1+\left\lVert \bs{v}_{\bs{y}} \right\rVert \right)\tilde{K}\frac{|b|!}{\tilde{\eta}_{\bs{G}}^{|b|}}\nonumber\\
        {}&=\sum_{k=1}^{n}\sum_{|\pi|=k}e^{-\Lambda_{\bs{y}}}\prod_{b\in\pi}\left(1+\left\lVert \bs{v}_{\bs{y}} \right\rVert \right)\tilde{K}\frac{|b|!}{\tilde{\eta}_{\bs{G}}^{|b|}}\nonumber\\
        {}&=\frac{1}{\tilde{\eta}_{\bs{G}}^{n}}\sum_{k=1}^{n}\left(1+\left\lVert \bs{v}_{\bs{y}} \right\rVert \right)^k\tilde{K}^ke^{-\Lambda_{\bs{y}}}\sum_{|\pi|=k}\prod_{b\in\pi}|b|!.
    \end{align}
    Let $0\leq\tau\coloneqq \left\lVert \bs{v}_{\bs{y}} \right\rVert\in\mathbb{R}_{\geq 0}$. It follows from $\Lambda_{\bs{y}}=\frac{1}{2}\left\lVert \bs{v}_{\bs{y}} \right\rVert^2$ that
    \begin{align}\label{eq:k.term.bound}
        \left(1+\left\lVert \bs{v}_{\bs{y}} \right\rVert \right)^k\tilde{K}^ke^{-\Lambda_{\bs{y}}}{}&=\left(1+\left\lVert \bs{v}_{\bs{y}} \right\rVert \right)^k\tilde{K}^ke^{-\frac{1}{2}\left\lVert \bs{v}_{\bs{y}} \right\rVert^2}\nonumber\\
        {}&=\left(1+\tau \right)^k\tilde{K}^ke^{-\frac{1}{2}\tau^2}.
    \end{align}
    Then, we find that
    \begin{align}\label{eq:sup.max}
        \left(1+\tau \right)^k\tilde{K}^ke^{-\frac{1}{2}\tau^2}{}&\leq 2^k\max\{1,\tau^k\}\tilde{K}^ke^{-\frac{1}{2}\tau^2}\nonumber\\
        {}&\leq 2^k\tilde{K}^k\sup_{\tau\geq 0}\max\{1,\tau^k\}e^{-\frac{1}{2}\tau^2}\nonumber\\
        {}&= 2^k\tilde{K}^k\max\left\{1,\left(\frac{k}{e}\right)^{\frac{k}{2}}\right\}.
    \end{align}
    In the above derivation, we used that the supremum over $\tau^{k}e^{-\tau^2/2}$ is attained at $\tau=\sqrt{k}$ independent of $\bs{y}\in[0,1]^{d_1}$. It follows from the series representation of the exponential function that
    \begin{align}
        \left(\frac{k}{e}\right)^{\frac{k}{2}}\leq \sqrt{k!}.
    \end{align}
 For $k\geq1$ it follows that $\max\{1,(k/e)^{k/2}\}\leq \sqrt{k!}$ and thus it follows from~\eqref{eq:sup.max} that
        \begin{align}\label{eq:series.representation}
        \left(1+\tau \right)^k\tilde{K}^ke^{-\frac{1}{2}\tau^2}{}&\leq  2^k\tilde{K}^k\sqrt{k!}\nonumber\\
        {}&\leq 2^k\tilde{K}^kk!.
    \end{align}
    Substituting~\eqref{eq:series.representation} into~\eqref{eq:k.term.bound} and subsequently into~\eqref{eq:Faa.di.Bruno.bound}, we find that
    \begin{align}
        \left|\frac{\partial^{n}}{\partial x_{i_1}\cdots\partial x_{i_n}}g_{\bs{y}}(\bs{x})\right|{}&\leq \frac{1}{\tilde{\eta}_{\bs{G}}^{n}}\sum_{k=1}^{n}2^k\tilde{K}^kk!\sum_{|\pi|=k}\prod_{b\in\pi}|b|!.
    \end{align}
    We conclude by using the identity
    \begin{equation}
        \sum_{|\pi|=k}\prod_{b\in\pi}|b|!=L(n,k),
    \end{equation}
    where
        \begin{equation}
        L(n,k)=\frac{n!}{k!}\binom{n-1}{k-1},
    \end{equation}
    is the Lah number~\cite{Rio02} to obtain
    \begin{align}
        \left|\frac{\partial^{n}}{\partial x_{i_1}\cdots\partial x_{i_n}}g_{\bs{y}}(\bs{x})\right|{}&\leq \frac{n!}{\tilde{\eta}_{\bs{G}}^{n}}\sum_{k=1}^{n}\binom{n-1}{k-1}\left(2\tilde{K}\right)^k\nonumber\\
        {}&=\frac{n!}{\tilde{\eta}_{\bs{G}}^{n}}2\tilde{K}\sum_{k=0}^{n-1}\binom{n-1}{k}\left(2\tilde{K}\right)^k\nonumber\\
        {}&=\frac{n!}{\tilde{\eta}_{\bs{G}}^{n}}2\tilde{K}\left(1+2\tilde{K}\right)^{n-1}\nonumber\\
        {}&\leq\frac{n!}{\tilde{\eta}_{\bs{G}}^{n}}2\tilde{K}\left(1+2\tilde{K}\right)^{n}\nonumber\\
        {}&=\frac{A_gn!}{\eta_{g}^{n}},
    \end{align}
    where $A_g\coloneqq\max\{1,2\tilde{K}\}$ and $\eta_g\coloneqq\tilde{\eta}_{\bs{G}}/(1+2\tilde{K})$. The case where $n=0$ follows from $g_{\bs{y}}\leq 1$.
\end{proof}

\section{Weighted inner-error bound}\label{app:weighted}
We provide the proof of Lemma~\ref{lem:weighted}.
\begin{proof}
    First, note that $I[g(\bs{y},\cdot)]\geq g^{\ast}(\bs{y})>0$ for all $\bs{y}\in\Omega_{>0}$ as the integration domain $[0,1]^{d_2}$ is compact. Next, assume that $I[g(\bs{y},\cdot)]\geq 2b$ for $0<b<1/2$. It follows that
    \begin{equation}
        \left|\frac{K}{I[g(\bs{y},\cdot)]}-1\right|\leq \frac{b}{I[g(\bs{y},\cdot)]}\leq \frac{1}{2},
    \end{equation}
    and thus it follows that $K/I[g(\bs{y},\cdot)]\in[1/2,3/2]$. From Assumption~\ref{asu:weighted.compatibility}, it follows that
    \begin{align}
        \left|f\left(K\right)-f\left(I[g(\bs{y},\cdot)]\right)\right|{}&\leq L_{f}\left|\log(K)-\log\left(I[g(\bs{y},\cdot)]\right)\right|\nonumber\\
        {}&=L_{f}\left|\log\left(\frac{K}{I[g(\bs{y},\cdot)]}\right)\right|.
    \end{align}
    We use the inequality $|\log(t)|\leq 2|t-1|$, which holds for all $t\geq 1/2$, to obtain that:
        \begin{align}
     L_{f}\left|\log\left(\frac{K}{I[g(\bs{y},\cdot)]}\right)\right|{}&\leq L_{f}2\left|\frac{K}{I[g(\bs{y},\cdot)]}-1\right|\nonumber\\
     {}&\leq \frac{2L_{f}b}{I[g(\bs{y},\cdot)]}.
    \end{align}
    It follows from Assumption~\ref{asu:weighted.compatibility} that
    \begin{align}\label{eq:2b}
        w(\bs{y})\left|f(K)-f\left(I[g(\bs{y},\cdot)]\right)\right|{}&\leq \frac{2L_{f}bA_{\rho}(g^{\ast}(\bs{y}))^{\rho}}{I[g(\bs{y},\cdot)]}\nonumber\\
        {}&\leq \frac{2L_{f}bA_{\rho}(I[g(\bs{y},\cdot)])^{\rho}}{I[g(\bs{y},\cdot)]}\nonumber\\
        {}&= 2L_{f}bA_{\rho}(I[g(\bs{y},\cdot)])^{\rho-1}\nonumber\\
        {}&\leq 2L_{f}bA_{\rho}(2b)^{\rho-1}\nonumber\\
        {}&=L_{f}A_{\rho}(2b)^{\rho},
    \end{align}
    where we used that $0<\rho<1$ and thus $\rho-1<0$. Next, we assume that $I[g(\bs{y},\cdot)]<2b$. It follows that $g^{\ast}(\bs{y})\leq I[g(\bs{y},\cdot)]< 2b$ and $K\leq I[g(\bs{y},\cdot)]+b< 3b$; thus, it follows that $K,I[g(\bs{y},\cdot)]\in[g^{\ast}(\bs{y}),3b]$. Therefore,
    \begin{align}
        \left|f(K)-f\left(I[g(\bs{y},\cdot)]\right)\right|{}&\leq L_{f}\left|\log(K)-\log\left(I[g(\bs{y},\cdot)]\right)\right|\nonumber\\
        {}&\leq L_{f}\log\left(\frac{3b}{g^{\ast}(\bs{y})}\right).
    \end{align}
    It follows that
        \begin{align}\label{eq:g.log.g}
        w(\bs{y})\left|f(K)-f\left(I[g(\bs{y},\cdot)]\right)\right|\leq A_{\rho}\left(g^{\ast}(\bs{y})\right)^{\rho}L_{f}\log\left(\frac{3b}{g^{\ast}(\bs{y})}\right).
    \end{align}
    Let $u\coloneqq g^{\ast}(\bs{y})\in(0,2b]$ and $\psi(u)\coloneqq u^{\rho}\log(3b/u)$. It follows that
    \begin{equation}\label{eq:u.log.1}
        \psi'(u)=u^{\rho-1}\left(\rho\log\left(\frac{3b}{u}\right)-1\right)
    \end{equation}
    and
    \begin{equation}
        \psi''(u)=(\rho-1)u^{\rho-2}\left(\rho\log\left(\frac{3b}{u}\right)-1\right)-\rho u^{\rho-2}.
    \end{equation}
    We have that $\psi'(u^{\ast})=0$ for $u^{\ast}=3be^{-1/\rho}$ and $\psi''(u^{\ast})=-\rho(3b)^{\rho-2}e^{-(\rho-2)/\rho}<0$, so $\psi$ is maximized at $u^{\ast}.$ Moreover, we have that $e^{-1/\rho}\leq e^{-1}<2/3$ and thus $0<u^{\ast}\leq 2b$, so the maximizer is in the admissible range. It follows that
    \begin{align}\label{eq:u.log.2}
        \psi(u)\leq \psi(u^{\ast})=\frac{(3b)^{\rho}}{e\rho},
    \end{align}
    and thus it follows from~\eqref{eq:g.log.g} that
    \begin{equation}\label{eq:3b}
        w(\bs{y})\left|f(K)-f\left(I[g(\bs{y},\cdot)]\right)\right|\leq L_{f}A_{\rho}\frac{(3b)^{\rho}}{e\rho}.
    \end{equation}
   Combining the results in~\eqref{eq:2b} and~\eqref{eq:3b} and setting
       \begin{equation}
        C_{\rho}\coloneqq L_{f}A_{\rho}\max\left\{2^{\rho},\frac{3^{\rho}}{e\rho}\right\}.
    \end{equation}
   completes the proof.
\end{proof}

\section{Weighted inner-error bound for the EIG}\label{app:weighted.EIG}
Here, we provide the proof of Lemma~\ref{lem:weighted.compatibility.EIG}.
\begin{proof}
    Let $\bs{z}\coloneqq\bs{\Phi}^{-1}(\bs{y}_2)$ and $t\coloneqq\left\lVert \bs{z}\right\rVert$ for every $\bs{y}_2\in(0,1)^{d_1-d_2}$. It follows that
    \begin{align}\label{eq:bound.w.1}
        w_{\bs{\gamma}}(\bs{y}_2){}&=\prod_{i=1}^{d_1-d_2}\gamma_ie^{-\frac{1}{2}z_i^2(\gamma_i^2-1)}\nonumber\\
        {}&\leq e^{-\frac{1}{2}t^2(\gamma_{\rm{min}}^2-1)}\prod_{i=1}^{d_1-d_2}\gamma_i.
    \end{align}
    Next, we demonstrate that
    \begin{equation}\label{eq:bound.exponent}
        -\frac{1}{2}t^2(\gamma_{\rm{min}}^2-1)\leq -\frac{\rho}{2}\left(U+\gamma_{\rm{\max}}t\right)^2+\frac{\rho^2U^2\gamma_{\rm{max}}^2}{2((\gamma_{\rm{min}}^2-1)-\rho\gamma_{\rm{max}}^2)}+\frac{\rho U^2}{2}
    \end{equation}
    for all $t\geq 0$. Let
    \begin{align}
        \psi(t){}&\coloneqq -\frac{1}{2}t^2(\gamma_{\rm{min}}^2-1)+\frac{\rho}{2}\left(U+\gamma_{\rm{\max}}t\right)^2\nonumber\\
        {}&=-\left(\frac{(\gamma_{\rm{min}}^2-1)-\rho\gamma_{\rm{max}}^2}{2}\right)t^2+\rho U\gamma_{\rm{max}}t+\frac{\rho U^2}{2}.
    \end{align}
    It follows that
    \begin{equation}
        \psi'(t)=-\left((\gamma_{\rm{min}}^2-1)-\rho\gamma_{\rm{max}}^2\right)t+\rho U\gamma_{\rm{max}}
    \end{equation}
    and
        \begin{equation}
        \psi''(t)=-\left((\gamma_{\rm{min}}^2-1)-\rho\gamma_{\rm{max}}^2\right).
    \end{equation}
    For $\rho<\rho^{\ast}=(\gamma_{\rm{min}}^2-1)/\gamma_{\rm{max}}^2$ $\psi''<0$ and for
    \begin{equation}
        t^{\ast}\coloneqq \frac{\rho U\gamma_{\rm{max}}}{(\gamma_{\rm{min}}^2-1)-\rho\gamma_{\rm{max}}^2}
    \end{equation}
    we have that $\psi'(t^{\ast})=0$,
    and thus, $\psi$ attains its maximum at $t^{\ast}$. It follows that $\psi(t)\leq \psi(t^{\ast})$, that is,
    \begin{equation}
        -\left(\frac{(\gamma_{\rm{min}}^2-1)-\rho\gamma_{\rm{max}}^2}{2}\right)t^2+\rho U\gamma_{\rm{max}}t+\frac{\rho U^2}{2}\leq \frac{\rho^2U^2\gamma_{\rm{max}}^2}{2\left((\gamma_{\rm{min}}^2-1)-\rho\gamma_{\rm{max}}^2\right)}+\frac{\rho U^2}{2},
    \end{equation}
    which is equivalent to~\eqref{eq:bound.exponent}. It follows that
    \begin{align}\label{eq:bound.w.2}
        e^{-\frac{1}{2}t^2(\gamma_{\rm{min}}^2-1)}\prod_{i=1}^{d_1-d_2}\gamma_i{}&\leq e^{-\frac{\rho}{2}\left(U+\gamma_{\rm{\max}}t\right)^2}e^{\frac{\rho^2U^2\gamma_{\rm{max}}^2}{2((\gamma_{\rm{min}}^2-1)-\rho\gamma_{\rm{max}}^2)}+\frac{\rho U^2}{2}}\prod_{i=1}^{d_1-d_2}\gamma_i\nonumber\\
        {}&\eqqcolon e^{-\frac{\rho}{2}\left(U+\gamma_{\rm{\max}}t\right)^2}A_{\rho}
    \end{align}
    for
    \begin{equation}
        A_{\rho}=e^{\frac{\rho^2U^2\gamma_{\rm{max}}^2}{2((\gamma_{\rm{min}}^2-1)-\rho\gamma_{\rm{max}}^2)}+\frac{\rho U^2}{2}}\prod_{i=1}^{d_1-d_2}\gamma_i.
    \end{equation}
    Finally, it follows from
    \begin{align}
        \sup_{\bs{x}\in[0,1]^{d_2}}\Lambda_{\bs{\gamma}}(\bs{y},\bs{x}){}&=\sup_{\bs{x}\in[0,1]^{d_2}}\frac{1}{2}\left\lVert \bs{\Sigma}_{\bs{\varepsilon}}^{-\frac{1}{2}}(\bs{G}(\bs{y}_1)-\bs{G}(\bs{x}))+\bs{D}_{\bs{\gamma}}\bs{z}\right\rVert^2\nonumber\\
        {}&\leq \sup_{\bs{x}\in[0,1]^{d_2}}\frac{1}{2}\left(\left\lVert \bs{\Sigma}_{\bs{\varepsilon}}^{-\frac{1}{2}}(\bs{G}(\bs{y}_1)-\bs{G}(\bs{x}))\right\rVert+\left\lVert \bs{D}_{\bs{\gamma}}\bs{z}\right\rVert\right)^2\nonumber\\
        {}&\leq\frac{1}{2}\left(U+\left\lVert \bs{D}_{\bs{\gamma}}\bs{z}\right\rVert\right)^2\nonumber\\
        {}&\leq\frac{1}{2}\left(U+\gamma_{\rm{\max}}t\right)^2\nonumber\\
    \end{align}
    that
    \begin{align}\label{eq:bound.w.3}
        e^{-\frac{\rho}{2}\left(U+\gamma_{\rm{\max}}t\right)^2}{}&\leq e^{-\rho\sup_{\bs{x}}\Lambda_{\bs{\gamma}}(\bs{y},\bs{x})}\nonumber\\
        {}&=\inf_{\bs{x}\in[0,1]^{d_2}}e^{-\rho\Lambda_{\bs{\gamma}}(\bs{y},\bs{x})}\nonumber\\
        {}&=\left(g^{\ast}(\bs{y})\right)^{\rho}.
    \end{align}
    Combining~\eqref{eq:bound.w.1} with~\eqref{eq:bound.w.2} and~\eqref{eq:bound.w.3} completes the proof.
\end{proof}

\section{Mixed derivatives of the composite integrand with importance sampling}\label{app:der.int.is}
We provide the proof of Lemma~\ref{lem:mixed.derivatives.is}. 
\begin{proof}
To demonstrate the boundary behavior of the integrand with importance sampling, we consider $d_1=2$ for simplicity. Thus, let $\bs{y}=(y_1,y_2)$, and
\begin{align}\label{eq:int.is}
    h_{\rm{IS}}(\bs{y}){}&=\log\left(\int_{[0,1]}e^{-\Lambda_{\gamma}(\bs{y},x)}\di{}x\right)w_{\gamma}(y_2)\nonumber\\
    {}&=\log\left(\int_{[0,1]}e^{-\frac{1}{2}\left(\frac{1}{\sigma_{\bs{\varepsilon}}}( G(y_1)-G(x))+\gamma\Phi^{-1}(y_2)\right)^2}\di{}x\right)\gamma e^{-\frac{1}{2}(\Phi^{-1}(y_{2}))^2(\gamma^2-1)}.
\end{align}
Then, it follows from the product rule that
\begin{align}\label{eq:part.int.is}
    \frac{\partial^2}{\partial y_1 y_2}h_{\rm{IS}}(\bs{y}){}&=\frac{\partial}{\partial y_1}\frac{\frac{\partial}{\partial y_2}\int_{[0,1]}e^{-\Lambda_{\gamma}(\bs{y},x)}\di{}x}{\int_{[0,1]}e^{-\Lambda_{\gamma}(\bs{y},x)}\di{}x}w_{\gamma}(y_2) +\frac{\frac{\partial}{\partial y_1}\int_{[0,1]}e^{-\Lambda_{\gamma}(\bs{y},x)}\di{}x}{\int_{[0,1]}e^{-\Lambda_{\gamma}(\bs{y},x)}\di{}x}\frac{\partial}{\partial y_2}w_{\gamma}(y_2).
\end{align}
For the first term in~\eqref{eq:part.int.is}, it follows from the quotient rule that
\begin{align}\label{eq:part.int.is.I}
    {}&\frac{\partial}{\partial y_1}\frac{\frac{\partial}{\partial y_2}\int_{[0,1]}e^{-\Lambda_{\gamma}(\bs{y},x)}\di{}x}{\int_{[0,1]}e^{-\Lambda_{\gamma}(\bs{y},x)}\di{}x}w_{\gamma}(y_2)\nonumber\\
    {}&=\left(\frac{\frac{\partial^2}{\partial y_1\partial y_2}\int_{[0,1]}e^{-\Lambda_{\gamma}(\bs{y},x)}\di{}x}{\int_{[0,1]}e^{-\Lambda_{\gamma}(\bs{y},x)}\di{}x}-\frac{\frac{\partial}{\partial y_1}\int_{[0,1]}e^{-\Lambda_{\gamma}(\bs{y},x)}\di{}x\frac{\partial}{\partial y_2}\int_{[0,1]}e^{-\Lambda_{\gamma}(\bs{y},x)}\di{}x}{\left(\int_{[0,1]}e^{-\Lambda_{\gamma}(\bs{y},x)}\di{}x\right)^2}\right)w_{\gamma}(y_2).
\end{align}
Again, for the first part in~\eqref{eq:part.int.is.I}, it follows that
\begin{align}\label{eq:first.term.quotient}
    {}&\frac{\frac{\partial^2}{\partial y_1\partial y_2}\int_{[0,1]}e^{-\Lambda_{\gamma}(\bs{y},x)}\di{}x}{\int_{[0,1]}e^{-\Lambda_{\gamma}(\bs{y},x)}\di{}x}\nonumber\\
    {}&=\frac{\int_{[0,1]}\left(-\frac{\partial^2}{\partial y_1\partial y_2}\Lambda_{\gamma}(\bs{y},x)+\frac{\partial}{\partial y_1}\Lambda_{\gamma}(\bs{y},x)\frac{\partial}{\partial y_2}\Lambda_{\gamma}(\bs{y},x)\right) e^{-\Lambda_{\gamma}(\bs{y},x)}\di{}x}{\int_{[0,1]}e^{-\Lambda_{\gamma}(\bs{y},x)}\di{}x}.
\end{align}
Next, it follows for $\Lambda_{\gamma}(\bs{y},x)=\frac{1}{2}\left(\frac{1}{\sigma_{\bs{\varepsilon}}}( G(y_1)-G(x))+\gamma\Phi^{-1}(y_2)\right)^2$ that
\begin{equation}
    \left|\frac{\partial}{\partial y_1}\Lambda_{\gamma}(\bs{y},x)\right|=\sqrt{2\Lambda_{\gamma}(\bs{y},x)}\frac{1}{\sigma_{\bs{\varepsilon}}}\left|\frac{\rm{d}}{{\rm{d}}y_1}G(y_1)\right|.
\end{equation}
Note that
\begin{align}\label{eq:inverse.function.theorem}
    \frac{\di{}}{\di{} y_2}\Phi^{-1}(y_2){}&=\left(\Phi^{\prime}(\Phi^{-1}(y_2))\right)^{-1},\nonumber\\
{}&=\sqrt{2\pi}e^{\frac{1}{2}(\Phi^{-1}(y_2))^2},
\end{align}
and thus
\begin{align}
    \left|\frac{\partial}{\partial y_2}\Lambda_{\gamma}(\bs{y},x)\right|{}&=\sqrt{2\Lambda_{\gamma}(\bs{y},x)}\gamma\frac{\rm{d}}{{\rm{d}}y_2}\Phi^{-1}(y_2)\nonumber\\
    {}&=\sqrt{2\Lambda_{\gamma}(\bs{y},x)}\gamma\sqrt{2\pi}e^{\frac{1}{2}(\Phi^{-1}(y_2))^2}.
\end{align}
Finally, it follows that
\begin{align}
    \frac{\partial^2}{\partial y_1\partial y_2}\Lambda_{\gamma}(\bs{y},x){}&=\frac{1}{\sigma_{\bs{\varepsilon}}}\frac{\rm{d}}{{\rm{d}}y_1}G(y_1)\gamma\sqrt{2\pi}e^{\frac{1}{2}(\Phi^{-1}(y_2))^2}.
\end{align}
From Assumption~\ref{asu:model}, it follows that there exists $L_G<\infty$ such that $G$ is Lipschitz continuous with Lipschitz constant $L_G$. Thus, it follows that there exists $T<\infty$ independent of $y_2$ such that
\begin{align}
    \sqrt{2\Lambda_{\gamma}(\bs{y},x)}{}&\leq \frac{1}{\sigma_{\bs{\varepsilon}}}L_G+\gamma\left|\Phi^{-1}(y_2)\right|\nonumber\\
    {}&\leq T(1+\left|\Phi^{-1}(y_2)\right|),
\end{align}
and that
\begin{equation}
    \left|\frac{\rm{d}}{{\rm{d}}y_1}G(y_1)\right|\leq \frac{A_G}{\eta_G}.
\end{equation}
It follows from the above derivations that
\begin{align}\label{eq:first.term.mixed}
    \left|\frac{\partial^2}{\partial y_1\partial y_2}\Lambda_{\gamma}(\bs{y},x)\right|\leq \frac{1}{\sigma_{\bs{\varepsilon}}}\frac{A_G}{\eta_G}\gamma\sqrt{2\pi}e^{\frac{1}{2}(\Phi^{-1}(y_2))^2},
\end{align}
and that
\begin{equation}\label{eq:first.term.product}
    \left|\frac{\partial}{\partial y_1}\Lambda_{\gamma}(\bs{y},x)\frac{\partial}{\partial y_2}\Lambda_{\gamma}(\bs{y},x)\right|\leq T^2(1+\left|\Phi^{-1}(y_2)\right|)^2\frac{1}{\sigma_{\bs{\varepsilon}}}\frac{A_G}{\eta_G}\gamma\sqrt{2\pi}e^{\frac{1}{2}(\Phi^{-1}(y_2))^2}.
\end{equation}
In particular, the bounds in~\eqref{eq:first.term.mixed} and in~\eqref{eq:first.term.product} are independent of $x$, and thus it follows for the first term in~\eqref{eq:part.int.is.I} that
\begin{align}\label{eq:first.term.first.term}
    {}&\frac{\int_{[0,1]}\left(-\frac{\partial^2}{\partial y_1\partial y_2}\Lambda_{\gamma}(\bs{y},x)+\frac{\partial}{\partial y_1}\Lambda_{\gamma}(\bs{y},x)\frac{\partial}{\partial y_2}\Lambda_{\gamma}(\bs{y},x)\right) e^{-\Lambda_{\gamma}(\bs{y},x)}\di{}x}{\int_{[0,1]}e^{-\Lambda_{\gamma}(\bs{y},x)}\di{}x}\nonumber\\
    {}&\leq \frac{1}{\sigma_{\bs{\varepsilon}}}\frac{A_G}{\eta_G}\gamma\sqrt{2\pi}e^{\frac{1}{2}(\Phi^{-1}(y_2))^2}\left(1+T^2(1+\left|\Phi^{-1}(y_2)\right|)^2\right).
\end{align}
Similarly, it follows for the second term in~\eqref{eq:part.int.is.I} that
\begin{align}\label{eq:first.term.second.term}
    {}&\frac{\frac{\partial}{\partial y_1}\int_{[0,1]}e^{-\Lambda_{\gamma}(\bs{y},x)}\di{}x\frac{\partial}{\partial y_2}\int_{[0,1]}e^{-\Lambda_{\gamma}(\bs{y},x)}\di{}x}{\left(\int_{[0,1]}e^{-\Lambda_{\gamma}(\bs{y},x)}\di{}x\right)^2}\nonumber\\
    {}&\leq T^2(1+\left|\Phi^{-1}(y_2)\right|)^2\frac{1}{\sigma_{\bs{\varepsilon}}}\frac{A_G}{\eta_G}\gamma\sqrt{2\pi}e^{\frac{1}{2}(\Phi^{-1}(y_2))^2}.
\end{align}
Combining~\eqref{eq:first.term.first.term} and~\eqref{eq:first.term.second.term}, we see that the term in~\eqref{eq:part.int.is.I} is bounded as follows:
\begin{align}\label{eq:f.first.term}
    {}&\frac{\partial}{\partial y_1}\frac{\frac{\partial}{\partial y_2}\int_{[0,1]}e^{-\Lambda_{\gamma}(\bs{y},x)}\di{}x}{\int_{[0,1]}e^{-\Lambda_{\gamma}(\bs{y},x)}\di{}x}w_{\gamma}(y_2)\nonumber\\
    {}&\leq \frac{1}{\sigma_{\bs{\varepsilon}}}\frac{A_G}{\eta_G}\gamma\sqrt{2\pi}e^{\frac{1}{2}(\Phi^{-1}(y_2))^2}\left(1+2T^2(1+\left|\Phi^{-1}(y_2)\right|)^2\right)w_{\gamma}(y_2)\nonumber\\
    {}&=\frac{1}{\sigma_{\bs{\varepsilon}}}\frac{A_G}{\eta_G}\sqrt{2\pi}\left(1+2T^2(1+\left|\Phi^{-1}(y_2)\right|)^2\right) \gamma^2e^{-\frac{1}{2}(\Phi^{-1}(y_{2}))^2(\gamma^2-2)}
\end{align}
for $w_{\gamma}(y_2)=\gamma e^{-\frac{1}{2}(\Phi^{-1}(y_{2}))^2(\gamma^2-1)}$. This term has no singularity at the boundary for $\gamma>\sqrt{2}$, a condition more strict than $\gamma>1$, which was necessary to ensure that the integrand itself has no boundary singularity. For last term in~\eqref{eq:part.int.is}, it follows that
\begin{align}
    {}&\frac{\frac{\partial}{\partial y_1}\int_{[0,1]}e^{-\Lambda_{\gamma}(\bs{y},x)}\di{}x}{\int_{[0,1]}e^{-\Lambda_{\gamma}(\bs{y},x)}\di{}x}\frac{\partial}{\partial y_2}w_{\gamma}(y_2)\nonumber\\
    {}&\leq T(1+\left|\Phi^{-1}(y_2)\right|)\frac{1}{\sigma_{\bs{\varepsilon}}}\frac{A_G}{\eta_G}\frac{\partial}{\partial y_2}w_{\gamma}(y_2)\nonumber\\
    {}&\leq T(1+\left|\Phi^{-1}(y_2)\right|)\frac{1}{\sigma_{\bs{\varepsilon}}}\frac{A_G}{\eta_G}\gamma \left|\Phi^{-1}(y_{2})\right|\sqrt{2\pi}e^{\frac{1}{2}(\Phi^{-1}(y_2))^2}(\gamma^2-1)e^{-\frac{1}{2}(\Phi^{-1}(y_{2}))^2(\gamma^2-1)}\nonumber\\
    {}&= T(1+\left|\Phi^{-1}(y_2)\right|)\frac{1}{\sigma_{\bs{\varepsilon}}}\frac{A_G}{\eta_G}\gamma \left|\Phi^{-1}(y_{2})\right|\sqrt{2\pi}(\gamma^2-1)e^{-\frac{1}{2}(\Phi^{-1}(y_{2}))^2(\gamma^2-2)}.
\end{align}
This term also has no singularity at the boundary for $\gamma>\sqrt{2}=\sqrt{\alpha+2}$ for $\alpha=0$, similar to the term in~\eqref{eq:f.first.term}. The above derivation demonstrates the case for $d_1=2$ and $\bs{\beta}=(1,1)$. We now provide the argument for arbitrary $d_1$, arbitrary $\alpha\in\mathbb{N}\cup\{0\}$, and $\bs{\beta}\in\{0,1,\ldots,\alpha+1\}^{d_1}$.

First, we write $\bs{z}=\bs{\Phi}^{-1}(\bs{y}_2)$ with components $z_i=\Phi^{-1}(y_{2,i})$, $1\leq i\leq d_1-d_2$. Next, we write $\bs{\beta}=(\bs{\beta}_1,\bs{\beta}_2)$, where $\bs{\beta}_1$ is a multi-index in $\mathbb{N}_0^{d_2}$ and $\bs{\beta}_2$ is a multi-index in $\mathbb{N}_0^{d_1-d_2}$ and abbreviate
\begin{equation}
   J(\bs{y})\coloneqq\int_{[0,1]^{d_2}}e^{-\Lambda_{\bs{\gamma}}(\bs{y},\bs{x})}\di{}\bs{x},
\end{equation}
and
\begin{equation}
   \ell(\bs{y})\coloneqq\log \left(J(\bs{y})\right),
\end{equation}
such that
\begin{equation}
 h_{\rm{IS}}=\ell\,w_{\bs{\gamma}}.
\end{equation}
Let $\Pi$ denote a polynomial in $\lVert\bs{z}\rVert_{\infty}$ with coefficients depending only on $d_1$, $d_2$, $\bs{\gamma}$, $A_{\bs{G}}$, $\eta_{\bs{G}}$ and $\bs{\Sigma}_{\bs{\varepsilon}}$. We aim to show that:
\begin{equation}\label{eq:is.master}
   \left|\partial^{\bs{\beta}}h_{\rm{IS}}(\bs{y})\right|\leq\Pi(\bs{z})\prod_{i=1}^{d_1-d_2}e^{-\frac{1}{2}z_i^2\left(\gamma_i^2-1-\beta_{2,i}\right)}.
\end{equation}

We begin by differentiating $\frac{\di{}}{\di{}y_{2,i}}\Phi^{-1}(y_{2,i})=\sqrt{2\pi}e^{\frac{1}{2}z^2}$ repeatedly and, using the chain rule, obtain by induction that
\begin{equation}\label{eq:phiinv.k}
   \frac{\di{}^{k}}{\di{}y_{2,i}^{k}}\Phi^{-1}(y_{2,i})=(2\pi)^{\frac{k}{2}}P_k(z)e^{\frac{k}{2}z^2},
\end{equation}
where
\begin{equation}
 P_1\equiv 1,\quad P_{k+1}=P_k'+kzP_k.
\end{equation}
For $k=1$,~\eqref{eq:phiinv.k} follows from~\eqref{eq:inverse.function.theorem} with $P_1\equiv 1$. Assume that~\eqref{eq:phiinv.k} holds for some $k\geq 1$. Its right-hand side depends on $y_{2,i}$ only through $z$, so the chain rule gives
\begin{align}
   \frac{\di{}^{k+1}}{\di{}y_{2,i}^{k+1}}\Phi^{-1}(y_{2,i})
   {}&=\frac{\di{}}{\di{}z}\left((2\pi)^{\frac{k}{2}}P_k(z)e^{\frac{k}{2}z^2}\right)\frac{\di{}z}{\di{}y_{2,i}}\nonumber\\
   {}&=(2\pi)^{\frac{k}{2}}\left(P_k'(z)+kzP_k(z)\right)e^{\frac{k}{2}z^2}\sqrt{2\pi}e^{\frac{1}{2}z^2}\nonumber\\
   {}&=(2\pi)^{\frac{k+1}{2}}\left(P_k'(z)+kzP_k(z)\right)e^{\frac{k+1}{2}z^2},
\end{align}
which demonstrates~\eqref{eq:phiinv.k} at order $k+1$ with $P_{k+1}=P_k'+kzP_k$, completing the induction. It follows that $\deg (P_k')=\deg (P_k)-1$ while $\deg(kzP_k)=\deg (P_k)+1$, so the second term dominates and $\deg (P_{k+1})=\deg (P_k)+1$; with $\deg (P_1)=0$ this gives $\deg (P_k)=k-1$. The leading coefficients satisfy $c_{k+1}=kc_k$ with $c_1=1$, hence $c_k=(k-1)!$. Each differentiation with respect to $y_{2,i}$ thus contributes exactly one factor $e^{\frac{1}{2}z_i^2}$, together with polynomial growth.

 Writing $w_{\bs{\gamma}}=\prod_i\gamma_ie^{-\frac{1}{2}z_i^2(\gamma_i^2-1)}$ and applying~\eqref{eq:phiinv.k} coordinatewise, the same induction gives
\begin{equation}\label{eq:weight.k}
   \partial^{\bs{\kappa}}_{\bs{y}_2}w_{\bs{\gamma}}(\bs{y}_2)=w_{\bs{\gamma}}(\bs{y}_2)\prod_{i=1}^{d_1-d_2}(2\pi)^{\frac{\kappa_i}{2}}R_{\kappa_i,i}(z_i)e^{\frac{\kappa_i}{2}z_i^2},
\end{equation}
where
\begin{equation}
 R_{0,i}\equiv 1,\quad R_{k+1,i}=R_{k,i}'+\left(k+1-\gamma_i^2\right)zR_{k,i},
\end{equation}
with $\deg (R_{k,i})\leq k$.

Next, let $\bs{v}(\bs{y},\bs{x})\coloneqq\bs{\Sigma}_{\bs{\varepsilon}}^{-\frac{1}{2}}(\bs{G}(\bs{y}_1)-\bs{G}(\bs{x}))+\bs{D}_{\bs{\gamma}}\bs{z}$, so that $\Lambda_{\bs{\gamma}}=\frac{1}{2}\lVert\bs{v}\rVert^2$ and, by Assumption~\ref{asu:model} and the compactness of $[0,1]^{d_2}$, it follows that $\lVert\bs{v}\rVert\leq\Pi(\bs{z})$ uniformly in $\bs{x}$ and $\bs{y}_1$. Derivatives of $\bs{v}$ with respect to $\bs{y}_1$ are bounded by Assumption~\ref{asu:model} and carry no exponential factor, whereas a derivative in $y_{2,i}$ acts only on $\gamma_iz_i$ and contributes $e^{\frac{1}{2}z_i^2}$ by~\eqref{eq:phiinv.k}. Expanding $\partial^{\bs{\beta}}\frac{1}{2}\langle\bs{v},\bs{v}\rangle$ by the multivariate general Leibniz rule as in the proof of Lemma~\ref{Lem:asu.2}, and noting that the exponential factors of the two arguments of the inner product result in $\prod_ie^{\frac{\beta_{2,i}}{2}z_i^2}$ in every term, we obtain that
\begin{equation}\label{eq:Lambda.beta}
   \sup_{\bs{x}\in[0,1]^{d_2}}\left|\partial^{\bs{\beta}}\Lambda_{\bs{\gamma}}(\bs{y},\bs{x})\right|\leq\Pi(\bs{z})\prod_{i=1}^{d_1-d_2}e^{\frac{\beta_{2,i}}{2}z_i^2}.
\end{equation}

 By Fa\`{a} di Bruno's formula applied to $e^{-\Lambda_{\bs{\gamma}}}$, we have that $\partial^{\bs{b}}e^{-\Lambda_{\bs{\gamma}}}=\mathcal{P}_{\bs{b}}e^{-\Lambda_{\bs{\gamma}}}$, where $\mathcal{P}_{\bs{b}}$ is a polynomial in the derivatives $\{\partial^{\bs{\rho}}\Lambda_{\bs{\gamma}}\}_{\bs{\rho}\leq\bs{b}}$ in which every monomial has total order $\bs{b}$; and hence $|\mathcal{P}_{\bs{b}}|\leq\Pi(\bs{z})\prod_ie^{\frac{b_{2,i}}{2}z_i^2}$ by~\eqref{eq:Lambda.beta}. Consequently, it follows that $\partial^{\bs{b}}J=\int\mathcal{P}_{\bs{b}}e^{-\Lambda_{\bs{\gamma}}}\di{}\bs{x}$ and
\begin{equation}
   \frac{\left|\partial^{\bs{b}}J\right|}{J}\leq\sup_{\bs{x}\in[0,1]^{d_2}}\left|\mathcal{P}_{\bs{b}}\right|\leq\Pi(\bs{z})\prod_{i=1}^{d_1-d_2}e^{\frac{b_{2,i}}{2}z_i^2},
\end{equation}
by a similar argument as in~\eqref{eq:first.term.first.term}. Applying Fa\`{a} di Bruno's formula to the outer function $\log$, whose $k$th derivative at $J$ is $(-1)^{k-1}(k-1)!J^{-k}$, it follows that
\begin{equation}
   \partial^{\bs{\mu}}\ell=\sum_{\pi}(-1)^{\left|\pi\right|-1}\left(\left|\pi\right|-1\right)!\prod_{b\in\pi}\frac{\partial^{\bs{b}}J}{J},
\end{equation}
and from this, it follows that
\begin{equation}\label{eq:ell.mu}
   \left|\partial^{\bs{\mu}}\ell(\bs{y})\right|\leq\Pi(\bs{z})\prod_{i=1}^{d_1-d_2}e^{\frac{\mu_{2,i}}{2}z_i^2}.
\end{equation}
For $\bs{\mu}=\bs{0}$ the same bound holds as $J\leq 1$ and $J\geq e^{-\sup_{\bs{x}}\Lambda_{\bs{\gamma}}}\geq e^{-\Pi(\bs{z})}$, resulting in $|\ell|\leq\Pi(\bs{z})$. Finally, applying the multivariate general Leibniz rule to $h_{\rm{IS}}=\ell\,w_{\bs{\gamma}}$ and combining~\eqref{eq:ell.mu} with~\eqref{eq:weight.k}, every term of
\begin{equation}
   \partial^{\bs{\beta}}h_{\rm{IS}}=\sum_{\bs{\mu}\leq\bs{\beta}}\binom{\bs{\beta}}{\bs{\mu}}\partial^{\bs{\mu}}\ell\;\partial^{\bs{\beta}-\bs{\mu}}w_{\bs{\gamma}}
\end{equation}
has an exponential factor 
\begin{equation}
\prod_{i=1}^{d_1-d_2}e^{\frac{\mu_{2,i}}{2}z_i^2}e^{\frac{\beta_{2,i}-\mu_{2,i}}{2}z_i^2}e^{-\frac{1}{2}z_i^2(\gamma_i^2-1)}=\prod_{i=1}^{d_1-d_2}e^{-\frac{1}{2}z_i^2(\gamma_i^2-1-\beta_{2,i})},
\end{equation}
independently of $\bs{\mu}$, which yields~\eqref{eq:is.master}. If $\gamma_i^2>1+\beta_{2,i}$ for every $1\leq i\leq d_1-d_2$, the right-hand side of~\eqref{eq:is.master} tends to zero as $|z_i|\to\infty$, that is, as $y_{2,i}$ approaches $0$ or $1$; hence $\partial^{\bs{\beta}}h_{\rm{IS}}$ is bounded on $(0,1)^{d_1}$ and extends continuously to $[0,1]^{d_1}$, and the extension is zero for $y_{2,i}\in\{0,1\}$. Taking $\gamma_i>\sqrt{\alpha+2}$ for all $1\leq i\leq d_1-d_2$ demonstrates the claim for every $\bs{\beta}\in\{0,1,\ldots,\alpha+1\}^{d_1}$.
\end{proof}